\DeclareMathOperator{\Q}{\mathbb{Q}}
\DeclareMathOperator{\N}{\mathbb{N}}
\DeclareMathOperator{\C}{\mathbb{C}}
\DeclareMathOperator{\Sym}{\operatorname{Sym}}
\DeclareMathOperator{\Ps}{\mathbb{P}}
\DeclareMathOperator{\Proj}{\operatorname{Proj}}
\DeclareMathOperator{\Gal}{\operatorname{Gal}}
\renewcommand{\H}{\operatorname{H}}
\newcommand{\bd}{_\textup{bd}}
\numberwithin{equation}{section}
\newtheorem{Theorem}{Theorem}[section]
\newtheorem{conj}[Theorem]{Conjecture}
\newtheorem{Lemma}[Theorem]{Lemma}
\newtheorem{Proposition}[Theorem]{Proposition}
\newtheorem{Definition}[Theorem]{Definition}
\newtheorem{Remark}[Theorem]{Remark}
\newtheorem*{rep@theorem}{\rep@title}
\newcommand{\newreptheorem}[2]{%
	\newenvironment{rep#1}[1]{%
		\def\rep@title{#2 \ref{##1}}%
		\begin{rep@theorem}}%
		{\end{rep@theorem}}}
\begin{document}
	
\title[Batyrev--Tschinkel for a cubic surface and its symmetric square]{The Batyrev--Tschinkel conjecture for a non-normal cubic surface and its symmetric square}

\author{Nils Gubela}

\address{University of Vienna, Oskar-Morgenstern-Platz 1, 1090 Vienna, Austria }
\email{nils.gubela@univie.ac.at}

\author{Julian Lyczak}

\address{IST Austria\\
	Am Campus 1\\
	3400 Klosterneuburg\\
	Austria}
\email{jlyczak@ist.ac.at}


\begin{abstract}
	We complete the study of points of bounded height on irreducible non-normal cubic surfaces by doing the point count on the cubic surface $W$ given by $t_0^2 t_2 = t_1^2 t_3$ over any number field. We show that the order of growth agrees with a conjecture by Batyrev and Manin and that the constant reflects the geometry of the variety as predicted by a conjecture of Batyrev and Tschinkel. We then provide the point count for its symmetric square $\Sym^2 W$. Although we can explain the main term of the counting function, the Batyrev--Manin conjecture is only satisfied after removing a thin set. Finally we interpret the main term of the count on $\Sym^2(\Ps^2 \times \Ps^1)$ done by Le Rudulier using these conjecture.
\end{abstract}

\maketitle

\date{\today}

\maketitle

\thispagestyle{empty}
\setcounter{tocdepth}{1}

\section{Introduction}

In this paper we are counting points of bounded height on certain varieties. Much progress has been made in recent years, but even for cubic surfaces there are no general results. Let us consider the relevant conjectures.

\subsection{The Batyrev--Manin conjecture}
Let $W\subseteq \Ps^3$ be a cubic surface over a global field $K$. We are interested in the order of growth of counting function $N_{W,K}(B)$, i.e. the number of points in $W(K)$ whose height is bounded by $B$. Such problems are usually heavily dependent on the geometry of the variety $W$. For example, if $W$ is smooth it contains precisely $27$ lines $\Lambda_i$ and the rational points on $W$ tend to lie on these lines, in the sense that the limit $\sum N_{\Lambda_i,K}(B)/N_{W,K}(B)$ tends to $1$. This reduces the problem to well-known results on counting points on curves. A more interesting conjecture is one by Batyrev and Manin \cite{BaMan}.

\begin{conj}[The Batyrev--Manin conjecture {\cite[Conjecture C\textquotesingle]{BaMan}}]\label{conj:Manin}
Consider a smooth projective variety $V$ over a number field $K$ such that the canonical class $K_V$ is non-effective. Endow $V(K)$ with a height function coming from an adelic metrization on a big line bundle $L$. Then the Batyrev--Manin conjecture predicts that there exists an open subvariety $U \subseteq V$ for which we have
\[
N_{U,K}(B) \sim c(V) B^a \log^{b-1} B
\]
for explicit constants $a$ and $b$ which depend on the smooth projective variety $V$ and the adelically metrized line bundle $L$.

For the case $L=\omega_V^\vee$ the constant $c(V)$ has been predicted by Peyre \cite{PeyreConstant}.
\end{conj}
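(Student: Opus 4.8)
\emph{Proof strategy.} Conjecture~\ref{conj:Manin} is open in general, so rather than a proof I can only describe the route by which the predicted asymptotic is obtained in the cases where the statement is known, and indicate where a uniform argument breaks down.

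\textbf{First, pin down the invariants.} Before doing any analysis I would make the constants explicit. The exponent is $a = a(L) = \inf\{\, t \in \Q : tL + K_V \text{ is pseudo-effective}\,\}$, and $b = b(L)$ is the codimension of the minimal face of the pseudo-effective cone of $V$ containing the class $a(L)\,L + K_V$; for $L = \omega_V^\vee$ and $V$ Fano these are $a = 1$ and $b = \operatorname{rk}\Pic V$. Peyre's constant factors as $c(V) = \alpha(V)\,\beta(V)\,\tau(\omega_V^\vee)$, where $\alpha(V)$ is the rational volume of a rational polytope in the dual of $(\Pic V)_\R$, $\beta(V) = \#\H^1\!\big(\Gal(\bar K/K),\,\Pic \bar V\big)$, and $\tau$ is a Tamagawa-type number built from the local densities on the $V(K_v)$ together with the convergence factors coming from the $L$-factors of $\Pic \bar V$. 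The content of the conjecture is then an asymptotic matching this recipe on some dense open $U \subseteq V$, and everything downstream is the problem of producing such a $U$ and such an asymptotic.

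\textbf{Then, choose the analytic engine adapted to the geometry of $V$.} There are essentially three. If $V$ is a smooth complete intersection of low degree and high dimension I would use the Hardy--Littlewood circle method, which produces the singular series and singular integral that assemble directly into $\tau(\omega_V^\vee)$, with $\Pic \bar V \cong \Z$ so that $\alpha = \beta = 1$. If $V$ is an equivariant compactification of a connected algebraic group $G$ (a torus, a vector group, or an extension of these) I would form the height zeta function $Z(s) = \sum_{x \in V(K)} H(x)^{-s}$, integrate it against the characters of $G(\mathbb{A})/G(K)$, apply Poisson summation, and read off the rightmost pole of $Z$ and its order from the analytic continuation of the resulting Eisenstein-type integral; a Tauberian theorem then yields the asymptotic and the residue unwinds into $\alpha\beta\tau$. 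For a del Pezzo surface, or any variety with a finitely generated Cox ring, I would instead pass to a universal torsor $\widetilde V \hookrightarrow \mathbb{A}^N$, rewrite $N_{U,K}(B)$ as a count of integral points of $\widetilde V$ in an expanding region, and evaluate it by iterated summation and a multivariable Tauberian argument, with the torsor equations producing $\tau$ and the torsor's structure producing $\alpha$ and $\beta$.

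\textbf{The hard part is the thin set.} In each case the naive count on $V$ is swamped by the rational points on a closed accumulating subset --- the $27$ lines on a smooth cubic surface, the $(-1)$-curves on a del Pezzo surface, the low-height fibres of a dominant map with rationally connected fibres --- so one has to excise exactly the right $U$ and then prove that the points surviving on $U$ equidistribute with respect to the $\tau_v$ at every place. This is precisely the obstruction to a general theorem: for an arbitrary Fano $V$ the circle method fails outside a narrow numerical range, there need be no group action, and the Cox ring need be neither finitely generated nor explicit, so none of the three engines applies; there is at present no substitute --- one would want a spectral theory of height zeta functions that does not rely on a group action --- for identifying the correct $U$ and establishing the equidistribution. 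For this reason the present paper does not attempt the general statement, but establishes the predicted order of growth and the shape of the leading constant in the two further cases named in the abstract, by adapting the universal-torsor approach above to the non-normal cubic surface $W$ and to $\Sym^2 W$.
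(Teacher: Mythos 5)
You have correctly identified that this statement is an open conjecture quoted from Batyrev--Manin, not a theorem the paper proves; the paper itself offers no proof and even remarks that the conjecture is unestablished for a single smooth cubic surface. Your survey of the known approaches is accurate, and your closing observation --- that the paper instead establishes the predicted asymptotics only for the non-normal cubic $W$ and for $\Sym^2 W$, via an adaptation of Frei's universal-torsor/lattice-point method over number fields --- matches exactly what the paper does.
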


Note that one might need to reduce to counting the points in the ample locus of $L$, to assure that there are only finitely many points of bounded height.

The conjecture predicts a specific growth of the number of points of bounded height, but this order might only be obtained after removing an \textit{accumulating subvariety} which we will call a \textit{thin set of type I}. Since its first appearance this conjecture has been refined to include the existence of other types of accumulating subsets of $V(K)$, namely the \textit{thin sets of type II}. Even then the conjecture above is not stated in its largest generality, but it will suffice for our purposes. In particular it is expected to hold for smooth cubic surfaces; it is predicted that for $W_0$, the complement of the lines on a smooth cubic surface $W$, we have
\[
N_{W_0,K}(B) \sim c(W) B^a \log^{b-1} B
\]
for explicit constants which depend on the smooth compactification $W$ of $W_0$. Although lower bounds are known for $N_{W_0,K}(B)$ under some conditions, see for example \cite{FrSo}, the Batyrev--Manin conjecture has not been established for a single example of a smooth cubic surface.

The Batyrev--Manin conjecture also applies to singular cubic surfaces by considering their minimal desingularisation. Unlike for the smooth case it has been confirmed in several instances after excluding the singular points and the finitely many lines on the surface. For an overview of the current progress we refer to \cite{LeBou}. A noteworthy example is the singular cubic surface with isolated singularities given by $x_0^3=x_1x_2x_3$. Many more authors have previously worked on this toric surface over $\Q$, while de la Bretèche \cite{Regis} and de la Bretèche and Swinnerton-Dyer \cite{RegisSD} have obtained the best error terms.  For a more extended history on this problem consult the introduction of \cite{Frei}. An important proof for the conjecture for this surface was given by Batyrev and Tschinkel \cite{Toric}; they proved the Batyrev--Manin--Peyre conjecture for the general class of toric varieties. A new proof for this one surface was given by Frei \cite{Frei}. He used universal torsors to count points over arbitrary number fields. This is based on previous work by Derenthal and Janda \cite{DeJa} who first used this technique over all imaginary number fields with class number one. Salberger first used the universal torsor method to give a new proof of Manin’s conjecture for split toric varieties over $\Q$ \cite{PS}. Universal torsors were first introduced by Colliot-Thélène and Sansuc in \cite{CTS1} and \cite{CTS2} in the context of the Hasse-principle.
We will use Frei's approach to establish the point count in Theorem~\ref{thm:introductionV} on a cubic surface with non-isolated singularities.

For such non-normal surfaces $W$ the geometry is different from before; there are now infinitely many lines which all contribute to the main term as before. This shows that just counting points on the separate lines need not give the expected growth. Furthermore each point lies on at least one line and removing the lines does not leave anything to count. We will however establish the conjecture for a non-normal cubic surface and show that the expected order of growth is obtained by counting points of bounded height on any non-empty open subvariety. That does however not address the expected constant $c(W)$. The conjecture by Peyre does no longer apply since any singularisation $X \to W$ is no longer crepant, i.e.~$\mathcal O_W(1)$ does not pull back to the anticanonical line bundle on $X$. In such situations a conjecture by Batyrev and Tschinkel \cite{BaTsch} predicts the constant.

\begin{conj}[{The Batyrev--Tschinkel conjecture \cite{BaTsch}}]\label{conj:BT}
Let $V$ be a smooth quasi-projective $K$-variety with an associated height function $H \colon V(K) \to \mathbb R$ coming from an adelic metrization on an ample line bundle $L$. Consider a natural projective completion $V \subseteq W$, see \cite[Definition~2.1.6]{BaTsch}, and fix a desingularisation $\rho \colon X \to W$ and consider the line bundle $M := \rho^*\mathcal O_W(1)$ on $X$. Let $V_\alpha$ be the minimal closed subvarieties of $V$ for which
\[
\theta_\alpha := \lim_{B \to \infty} \frac{N_{V_\alpha,K}(B)}{N_{V,K}(B)} > 0.
\]
Suppose that
\begin{enumerate}
\item there is no closed subvariety $V' \subsetneq V$ which contains all $V_\alpha$, and
\item $M$ restricted to the closure $X_\alpha$ of $V_\alpha$ in $V \subseteq X$ is close the anticanonical bundle in the sense of \cite[Definition 2.3.4]{BaTsch}.
\end{enumerate} Then
\[
N_{V,K}(B) \sim \sum_\alpha c(X_\alpha) B^a \log^{b-1} B,
\]
where the constants $a$ and $b$ are those in the Batyrev--Manin conjecture and the constants $c$ are defined similar to those in the conjecture by Peyre.
\end{conj}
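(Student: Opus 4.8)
The plan is to establish Theorem~\ref{thm:introductionV} (the point count on $W$), and thereby verify Conjectures~\ref{conj:Manin} and~\ref{conj:BT} for this surface, by an explicit universal torsor parametrisation followed by a uniform lattice‑point count, in the spirit of Frei's treatment of $x_0^3=x_1x_2x_3$. First I would record the geometry of $W\colon t_0^2t_2=t_1^2t_3$. Its singular locus is the double line $\ell=\{t_0=t_1=0\}$, and $W$ is ruled by the pencil of lines $\ell_\lambda=\{t_0=\lambda t_1,\ t_3=\lambda^2t_2\}$, $\lambda\in\Ps^1$, with $\ell_\lambda\cap\ell=[0:0:1:\lambda^2]$; the non‑normality is exactly the $2$‑to‑$1$ map $\lambda\mapsto\lambda^2$ along $\ell$. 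One checks that the normalisation of $W$ is already smooth, so the desingularisation $\rho\colon X\to W$ is finite and contracts nothing, and that $X\cong\mathbb{F}_1$ with $\rho$ given by the incomplete linear system $|M|$, $M:=\rho^*\mathcal O_W(1)=C_0+2f$, where $f$ is the ruling (the class of the $\ell_\lambda$) and $C_0$ the $(-1)$‑section, which maps to the distinguished line $\{t_2=t_3=0\}$. Since $aM+K_X\in\overline{\mathrm{Eff}}(X)$ precisely for $a\ge 2$ and $2M+K_X=f$ spans a $1$‑dimensional face of the effective cone, the Batyrev--Manin invariants are $a=2$, $b=1$, so the target is $N_{U,K}(B)\sim c(W)\,B^2$. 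Crucially $X$ is toric, so its universal torsor is available in closed form: it is affine $4$‑space with coordinates $(d,e,a,b)$, the coprimality conditions $\mathfrak a+\mathfrak b=\mathcal O_K=\mathfrak d+\mathfrak e$, and the map $[da:db:b^2e:a^2e]$; a short computation on gcd's of ideals shows this parametrises $(W\setminus\ell)(K)$ up to units.

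The count then proceeds by staged summation along the ruling. Over $\Q$ the height in torsor coordinates is, up to normalisation, $H=\max\bigl(|d|\,m,\ |e|\,m^2\bigr)$ with $m=\|(a,b)\|_\infty$; fixing $(a,b)$ — equivalently fixing the line $\ell_{a/b}$ — and counting the admissible coprime pairs $(d,e)$ gives $\tfrac{12}{\pi^2}\,m^{-3}B^2+O(\cdots)$ per line, and summing over $(a,b)$ — the Dirichlet series $\sum_{(a,b)}\|(a,b)\|_\infty^{-3}$ being convergent — yields $N_{W\setminus\ell,\,K}(B)=c(W)\,B^2+o(B^2)$ with $c(W)$ explicit and visibly equal to $\sum_\lambda c(\ell_\lambda)$. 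Over an arbitrary number field one replaces the integer pairs by coprime fractional ideals with chosen generators (summing over the class group), performs the same two‑step summation inside a fundamental domain for the unit action via Dirichlet's unit theorem and Frei's counting lemmas, and removes the coprimality and divisibility constraints by Möbius inversion; the inner sum is a lattice‑point count in an archimedean region whose normalised volume supplies the density at infinity. For the ``any non-empty open $U$'' part of the statement one observes that $W\setminus U$, being a proper closed subset, can contain only finitely many of the $\ell_\lambda$ entirely (plus finitely many further points of the others), so $N_{U,K}(B)\sim\bigl(\sum_{\ell_\lambda\subseteq U}c(\ell_\lambda)\bigr)B^2$, still of the predicted order $B^a\log^{b-1}B=B^2$.

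The final step is to identify $c(W)$ with the Batyrev--Tschinkel prediction. The accumulating subvarieties $V_\alpha$ are the lines $\ell_\lambda$: their closures $X_\alpha\cong\Ps^1$ carry $M|_{\ell_\lambda}=\mathcal O(1)$, whose $a$‑invariant on $\Ps^1$ equals $2$, matching that of $M$ on $X$, so it is close to anticanonical in the sense of hypothesis~(2); and $\bigcup_\lambda\ell_\lambda$ is dense, so hypothesis~(1) holds. One computes Peyre's constant $c(\ell_\lambda)$ for $\Ps^1$ with the adelic metric pulled back from $W\subseteq\Ps^3$ and checks $c(W)=\sum_\lambda c(\ell_\lambda)$, which is essentially a reorganisation of the same sum once the $p$‑adic and archimedean factors of the torsor count are recognised as local densities. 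A subtlety must be handled here: the $(-1)$‑section $\{t_2=t_3=0\}$ is also a minimal closed subvariety of positive density, yet it must not contribute a separate term (its rational points are already distributed one per line $\ell_\lambda$), so the correct reading of Conjecture~\ref{conj:BT} in the presence of infinitely many accumulating curves has to be made precise. I expect the main obstacle to be the uniformity, in two forms. First, because every curve on $W$ shares the maximal growth exponent, passing from the correct order to a genuine asymptotic forces one to sum the error terms over \emph{all} lines $\ell_\lambda$ simultaneously, which requires lattice‑point estimates uniform in the arithmetic size of $\lambda$ — its denominator over $\Q$, the norm of the separating ideal together with the regulator contribution over a general $K$ — and this is where Frei's number‑field machinery is pushed hardest. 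Second, the non‑normality must be threaded through the torsor so that the $2$‑to‑$1$ gluing along $\ell$ produces neither double counting nor omissions, and so that the constant one extracts is manifestly the Tamagawa‑type number attached to the non‑anticanonical bundle $M$, rather than a mismatched normalisation.
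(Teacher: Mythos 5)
The statement you were handed is the Batyrev--Tschinkel conjecture itself, which the paper merely states and of course has no proof of in general; you have sensibly reinterpreted the task as verifying it for the non-normal cubic $W\colon t_0^2t_2=t_1^2t_3$, i.e.\ as proving Theorem~\ref{thm:introductionV}. On that reading your outline tracks the paper closely: the geometric analysis ($X\cong\mathbb F_1$ as a $(1,1)$-divisor in $\Ps^2\times\Ps^1$, $M=\rho^*\mathcal O_W(1)=C_0+2f$, $a=2$, $b=1$), the universal-torsor parametrisation $[da:db:b^2e:a^2e]$ with coprimality $(\mathfrak a,\mathfrak b)=(\mathfrak d,\mathfrak e)=\mathcal O_K$ and torsor height $\max(|d|m,\,|e|m^2)$, the staged count (inner lattice count over $(d,e)$ via Frei's lemma, outer sum over coprime $(a,b)$, M\"obius inversion, class-group and unit-domain bookkeeping for general $K$), and the comparison of the resulting constant with the conjectured sum all correspond to Sections 2--4 of the paper. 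The obstacles you identify (uniform error terms across the lines, threading the non-normality through the torsor) are indeed where the paper's effort is spent, and the paper's added value is precisely the explicit $K$-uniform error term $O(\hat c_{V,K}B^{2-1/d})$.

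The genuine misstep is in your treatment of the base line $V_0=\{t_2=t_3=0\}$, the image of the $(-1)$-section. You flag it correctly as a subtlety but then resolve it the wrong way, asserting it ``must not contribute a separate term.'' In fact the paper's constant is $c_{V,K}=c_{\Ps^1,K}\bigl(Z_K(\Ps^1,3)+1\bigr)$, and the $+1$ \emph{is} $c(V_0)$. In the Batyrev--Tschinkel formulation $V_0$ is an additional minimal closed subvariety with $\theta>0$: it is not contained in any fibre $V_{\mathbf y}$, it satisfies $N_{V_0,K}(B)\sim c_{\Ps^1,K}B^2$, and it is strongly $\mathcal L$-saturated, so the conjectural sum $\sum_\alpha c(X_\alpha)$ includes $c(V_0)$ alongside $\sum_{\mathbf y}c(V_{\mathbf y})=c_{\Ps^1,K}Z_K(\Ps^1,3)$. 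Your heuristic that ``its rational points are already distributed one per line $\ell_\lambda$'' misleads you: a single point in each fibre has density zero \emph{within that fibre}, so the per-fibre constants $c(V_{\mathbf y})=c_{\Ps^1,K}H(\mathbf y)^{-3}$ do not capture $V_0$ at all, yet the aggregate of those points across all fibres is of the full order $B^2$ and shows up as a genuine extra term. Concretely, in the paper's argument the lines $\{t_0=t_3=0\}$, $\{t_1=t_2=0\}$, $\{t_2=t_3=0\}$ are excised before parametrising $U$ and restored at the end; the first two restore the excluded fibres over $(0{:}1)$ and $(1{:}0)$, and the third is exactly the extra $c_{\Ps^1,K}$ from $V_0$. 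Carried through as you wrote it, your version would produce $c_{\Ps^1,K}Z_K(\Ps^1,3)$ and disagree with the explicit torsor count.
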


We will refer to this conjecture as \textit{the Batyrev--Tschinkel conjecture}. The complete statement together with all definitions can be found in Sections~2 and 3 in \cite{BaTsch}. This conjecture generalises the conjecture by Peyre for smooth cubic surfaces above, since one can show that for the complement of the lines $W_0$ there is only one subvariety with $\theta >0$, namely $W_0$ itself.

It does not directly apply to non-normal varieties $W$. This can be resolved by considering the regular locus $V \subseteq W$; since the complement is still lower dimensional this does not present any problems.

This conjecture was proven to be correct for the non-normal Cayley ruled surface $t_0t_1t_2=t_0^2t_3+t_1^3$ by de la Bretèche, Browning and Salberger \cite{BBS}. In this case the infinitely many lines all contribute to the main term precisely in the way as predicted by the conjecture.

\subsection{The remaining non-normal cubic surface}
In the present paper will give a count for the number of rational points of bounded height for the other non-normal cubic surface $W$ given by $t_0^2 t_2 = t_1^2 t_3$. The techniques by Frei described above will allow us to do so over arbitrary number fields. Establishing Manin's conjecture for this surface follows from work by Batyrev and Tschinkel \cite[Section~4.5]{BaTsch} since $W$ is toric.  However, we will also need explicit error terms and their approach does yield those.

We will first need to make our height function explicit. For a point $\mathbf x = (x_0\colon \dots \colon x_{n}) \in \Ps^n(K)$, we will use the \textit{usual height}
$$
H_K(\mathbf x) = \prod_{\nu \in M_K} \max_{i} \left\{ \left| x_i \right|_{\nu} \right\}^{[K_\nu : \Q_p]},
$$
where $M_K$ denotes the set of places of $K$ and $\nu$ extends the place $p$ of $\Q$. With this height, we define $Z_K(\Ps^1, s) = \sum_{\mathbf x \in \Ps^1(K)} H_K(\mathbf x)^{-s}$ to be the \textit{height zeta function} of the projective line and $c_{\Ps^1,K}$ to be the constant $N_{\Ps^1,K}(B) \sim c_{\Ps^1,K} B^2$ obtained by Schanuel \cite{Schanuel}.

We can now state our result for counting points on our non-normal cubic $W$. To compare the result with the conjecture by Batyrev and Tschinkel we will restrict to counting points on the regular locus $V$ of $W$.

\begin{Theorem}\label{thm:introductionV}
	Let $K$ be a number field and let $W$ be the surface in $\Ps^3$ given by $t_0^2 t_2 = t_1^2 t_3$.
\begin{enumerate}
\item[(a)] The regular locus $V$ of $W$ is the complement of the \textup{singular line} given by $t_0 = t_1 = 0$.
\item[(b)] The morphism $V \to \Ps^1$ given by $(t_0 \colon t_1 \colon t_2 \colon t_3) \mapsto (t_0 \colon t_1)$ is a fibration of the surface into lines. The fibre over a point $\mathbf y \in \Ps^1(K)$ will be denoted by $V_{\mathbf y}$.
\item[(c)] The surface contains one more line given by $t_2=t_3=0$ and is called the \textup{base line} $V_0$.
\item[(d)] 	We have
	$$
	N_{V,K}(B) := \{\mathbf x\in V(K)\ \colon\ H_K(\mathbf x) \leq B\} = c_{V,K} B^2+ O\left(B^{2-1/d}\right).
	$$
	The constant $c_{V,K}$ is given by
	$$
	c_{V,K} = c_{\Ps^1,K}\left(Z_K(\Ps^1,3)+1\right).
	$$
\end{enumerate}
\end{Theorem}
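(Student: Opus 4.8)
The plan is the following. Parts~(a)--(c) are settled by direct computation. For~(a) the Jacobian criterion applied to $F = t_0^2 t_2 - t_1^2 t_3$ shows that the partial derivatives $(2t_0 t_2,\, -2 t_1 t_3,\, t_0^2,\, -t_1^2)$ vanish simultaneously exactly along the line $t_0 = t_1 = 0$, which lies on $W$; hence the regular locus is $V = W\smallsetminus\{t_0=t_1=0\}$. For~(b), on $V$ the pair $(t_0,t_1)$ never vanishes, so $(t_0:\dots:t_3)\mapsto(t_0:t_1)$ is a morphism $V\to\Ps^1$; the map $[s:t]\mapsto(sy_0:sy_1:ty_1^2:ty_0^2)$ is a closed immersion of $\Ps^1$ into $\Ps^3$ with image on $W$, and its locus $s\ne 0$ is exactly $V_{\mathbf y}$, the omitted point $(0:0:y_1^2:y_0^2)$ lying on the singular line. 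For~(c) the line $t_2=t_3=0$ lies on $W$, is disjoint from the singular line, and is the image of the section $\mathbf y\mapsto(y_0:y_1:0:0)$ of the fibration, so it meets each $V_{\mathbf y}$ in the single point with $t=0$, giving $V_{\mathbf y}\smallsetminus V_0\cong\mathbb G_m$.

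For~(d) I would use that $V_0$ is a section, which gives the disjoint decomposition $V(K) = V_0(K)\sqcup\bigsqcup_{\mathbf y\in\Ps^1(K)}(V_{\mathbf y}\smallsetminus V_0)(K)$ and hence
\[
N_{V,K}(B) = N_{V_0,K}(B) + \sum_{\mathbf y\in\Ps^1(K)} N_{V_{\mathbf y}\smallsetminus V_0,\,K}(B).
\]
Since $V_0\cong\Ps^1$ is linearly embedded, $\mathcal O_W(1)|_{V_0}$ is $\mathcal O_{\Ps^1}(1)$ with the usual height, so Schanuel's theorem gives $N_{V_0,K}(B) = c_{\Ps^1,K}B^2 + O(B^{2-1/d})$; this is the source of the ``$+1$'' in $c_{V,K}$. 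For the fibre sums I would make the parametrisation of~(b) arithmetic, i.e.\ pass to the universal torsor of $V\smallsetminus V_0$. Over $\Q$: writing $P\in(V_{\mathbf y}\smallsetminus V_0)(\Q)$ with a primitive representative and setting $g=\gcd(t_0,t_1)$, the relation $y_0^2 t_2 = y_1^2 t_3$ together with $\gcd(y_0,y_1)=1$ forces the representative to be $(gy_0:gy_1:y_1^2 a:y_0^2 a)$ with $g,a$ nonzero and $\gcd(g,a)=1$, and conversely; this is a bijection with pairs $(\mathbf y,(g:a))\in\Ps^1(\Q)\times\Ps^1(\Q)$ having $g,a\ne0$. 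For arbitrary representatives over any $K$ one has the identity
\[
H_K(gy_0:gy_1:y_1^2 a:y_0^2 a) = \prod_{\nu\in M_K}\bigl(\|\mathbf y\|_\nu\max\{|g|_\nu,\ |a|_\nu\|\mathbf y\|_\nu\}\bigr)^{[K_\nu:\Q_p]},\qquad \|\mathbf y\|_\nu:=\max_i|y_i|_\nu,
\]
so $N_{V_{\mathbf y}\smallsetminus V_0,K}(B)$ counts the points of $\Ps^1(K)$ lying in $\mathbb G_m$ whose \emph{twisted} height --- the usual one with the two coordinates rescaled by $\|\mathbf y\|_\nu$ and $\|\mathbf y\|_\nu^2$ --- is at most $B$. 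A Schanuel-type count on $\Ps^1$ with this twist, made uniform in the twisting factors in the manner of Frei, then gives
\[
N_{V_{\mathbf y}\smallsetminus V_0,K}(B) = c_{\Ps^1,K}\,H_K(\mathbf y)^{-3}\,B^2 + (\text{error}),
\]
a count which is zero as soon as $H_K(\mathbf y)\gg B^{1/2}$.

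Summing over $\mathbf y$, the main terms produce $c_{\Ps^1,K}B^2\sum_{\mathbf y\in\Ps^1(K)}H_K(\mathbf y)^{-3} = c_{\Ps^1,K}\,Z_K(\Ps^1,3)\,B^2$, the series converging because $N_{\Ps^1,K}(T)\ll T^2$; adding $N_{V_0,K}(B)$ yields the asserted $c_{V,K} = c_{\Ps^1,K}(Z_K(\Ps^1,3)+1)$. The main obstacle is the error term. Summing the fibrewise errors crudely, and correcting for the fact that the series for $Z_K(\Ps^1,3)$ is effectively truncated at $H_K(\mathbf y)\asymp\sqrt B$ (beyond which the fibres carry no points), only yields a loss of size about $B^{3/2}$: the troublesome range is $H_K(\mathbf y)$ near $\sqrt B$, where each fibre is almost empty and the per-fibre asymptotic is useless. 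Reaching the stated $O(B^{2-1/d})$ requires instead a \emph{single} lattice-point count in the four torsor parameters $(y_0,y_1,g,a)$ over $\mathcal O_K$, treating both coprimality conditions simultaneously so that the only boundary contribution is the genuine one coming from $H_K(P)=B$; this is exactly where Frei's lattice-point estimates over a general number field --- together with the bookkeeping of ideal classes and of a fundamental domain for the units --- do the work. I expect the remaining verifications to be routine.
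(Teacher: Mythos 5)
Parts (a)--(c) are correct and match the paper's geometric setup in Section~2. For (d), your fibrewise decomposition and the factorised height are in substance what the paper obtains in Lemma~\ref{lem:ParaOfIntId} (your $(g,a)$ are the paper's $(y_3,y_2)$) and in the height product \eqref{HeightProd}; summing the per-fibre main terms $c_{\Ps^1,K}H_K(\mathbf y)^{-3}B^2$ over all $\mathbf y\in\Ps^1(K)$ (including the two degenerate fibres) and adding the section $V_0$ yields $c_{\Ps^1,K}\bigl(Z_K(\Ps^1,3)+1\bigr)B^2$, in agreement with the paper's bookkeeping $Z_K(\Ps^1,3)-2+3$.

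What you misidentify is the mechanism that controls the error term. The paper does \emph{not} perform a single four-dimensional lattice-point count. It keeps exactly the per-fibre structure you dismiss at the end: for each fixed $(y_0,y_1)$ it carries out one M\"obius inversion and applies a two-dimensional lattice lemma (Lemma~\ref{lem:FreiLattice}, adapted from Frei), then sums over $(y_0:y_1)\in\Ps^1(K)$. What rescues this is \emph{uniformity in $\mathbf y$}, not a higher-dimensional count: the lemma yields, for each fixed $\mathbf y$, an error $\ll_d B^{2-1/d}H_K(\mathbf y)^{-(3-2/d)}$ after tracking $\kappa=H_K(\mathbf y)$ through the estimate, and the series $\sum_{\mathbf y}H_K(\mathbf y)^{-(3-2/d)}$ converges for $d\ge 3$ and contributes only a single $\log B$ for $d=2$. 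So the troublesome boundary range $H_K(\mathbf y)\asymp\sqrt B$ is handled automatically, and the per-fibre route does close the argument. Your $B^{3/2}$ arises from summing a $\mathbf y$-independent bound; moreover $B^{3/2}\le B^{2-1/d}$ already when $d\ge 2$, and the $O(B^{3/2})$ cost of extending the truncated sum for $Z_K(\Ps^1,3)$ to infinity is absorbed for the same reason. In short: your plan reaches the theorem, but you should carry the $H_K(\mathbf y)$-dependence through the two-variable lattice lemma rather than reach for a four-variable count, which is not what the paper does and is not needed.
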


The main novelty in this theorem is the explicit error term. Furthermore, we also managed to make the dependency of the implicit constant on the number field explicit.

Theorem~\ref{thm:introductionV} reflects the geometry of $V$ in the following sense. Each fibre $V_{\mathbf y}$ of $V \to \Ps^1$ is an affine line with a twisted form of the usual height function. We will exhibit a smooth proper desingularisation $V \hookrightarrow X$ such that the closure of $V_{\mathbf y}$ is smooth and hence a projective line. This accounts for the $c_{\Ps^1,K}Z_K(\Ps^1,3)$. The height function on the remaining two line $V_0$ is the usual height function on $\Ps^1$ and explains the remaining $c_{\Ps^1,K}$.

Let us compare this with another expectation from \cite{BaTsch}. In the notation of Conjecture~\ref{conj:BT}, there is a natural fibration $X \to Y$ and Batyrev and Tschinkel predicted that each \textit{$V_\alpha$} will be contained in a fibre $X_{\mathbf y}$. Our example shows this to be false. So, although all the lines on $V$ form what Batyrev and Tschinkel call an \textit{asymptotic arithmetic fibration}, they do not form an actual fibration.

\subsection{Examples coming from symmetric squares}

We also give a treatment of $\Sym^2 V$ over the rationals. Its rational points are Galois invariant pairs of $\bar\Q$-points on $V$, and a height on $V$ induces a natural height on $\Sym^2 V$. Recall that counting points on $V$ can be explained by thinking of $V$ as a collection of lines. Any such line $V_{\mathbf y}$, for $\mathbf y \in \Ps^1(\Q) \cup \{0\}$, induces a closed subscheme $V_{\mathbf y,\mathbf y} \subseteq \Sym^2 V$ whose natural compactification $X_{\mathbf y,\mathbf y}$ is isomorphic to $\Ps^2$. However, the induced height on such a projective plane $\Sym^2\Ps^1$ is not again the usual height on $\Ps^2$. We will however be able to compare those results to the point count
\[
N_{\Sym^2 \Ps^1,K}(B) \sim c_{\Sym^2 \Ps^1,K} B^3
\]
when the height function does come from the usual one on $\Ps^1$. This result was established by Schmidt \cite{Schmidt} and interpreted in terms of the conjectures by Manin--Batyrev and Peyre by Le Rudulier \cite{RudulierThesis}.

The main contribution for the count for $\Sym^2 V$ comes again from the one-dimensional family of closed subschemes $V_{\mathbf y,\mathbf y}$ and the closed subschemes $V_{0,0}$ which meets every fibre of this fibration.

\begin{Theorem}
	Consider the symmetric square $\Sym^2 V$ for the surface $V$ in Theorem~\ref{thm:introductionV} defined over the rationals numbers. We have
	$$
	N_{\Sym^2 V, \Q}(B) = cB^3 + O\left( B^2 \log B\right),
	$$
	where the constant $c$ is given by
	$$
	c_{\Sym^2 \Ps^1,\Q}\left( Z_{\Q}\left(\Ps^1,9\right)+1\right).
	$$
\end{Theorem}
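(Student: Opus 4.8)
The plan is to mimic the structure of the proof of Theorem~\ref{thm:introductionV}, but now on the symmetric square, reducing the count on $\Sym^2 V$ to a count on $\Sym^2$ of the fibres together with a count coming from the base line. First I would set up the fibration. The morphism $V \to \Ps^1$ induces a map $\Sym^2 V \to \Sym^2 \Ps^1 \cong \Ps^2$, and a $\Q$-point of $\Sym^2 V$ is a Galois-stable unordered pair $\{P_1, P_2\}$ of $\bar\Q$-points of $V$. Such a pair splits into three cases according to the images $\{\mathbf y_1, \mathbf y_2\}$ in $\Ps^1$: either (i) the two points lie on the same fibre $V_{\mathbf y}$ with $\mathbf y \in \Ps^1(\Q)$ (the diagonal $\Sym^2 V_{\mathbf y} \subseteq \Sym^2 V$), or (ii) the two points lie on conjugate fibres $V_{\mathbf y}, V_{\bar{\mathbf y}}$ for a quadratic point $\mathbf y \in \Ps^1(\bar\Q)$, or (iii) they lie on distinct rational fibres $V_{\mathbf y_1}, V_{\mathbf y_2}$. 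The contribution of the base line $V_0$ must be separated out throughout, since $V_0$ meets every fibre; the pairs $\{P_1,P_2\}$ with both points on $V_0$ form a copy of $\Sym^2\Ps^1$ and yield the $+1$ term.

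The key computation is case (i), which I expect to produce the main term. For each $\mathbf y \in \Ps^1(\Q)$, the closure of $V_{\mathbf y}$ in the desingularisation $X$ is a $\Ps^1$, and the induced height on $\Sym^2 V_{\mathbf y} \subseteq \Sym^2 X_{\mathbf y} \cong \Ps^2$ is a twisted form of the height considered by Schmidt and Le Rudulier. I would track the twist carefully: a point of $V_{\mathbf y}$ has height growing like $H_K(\mathbf y)^{3}$ times the ``intrinsic'' affine coordinate (this factor of $3$ is exactly what produced $Z_K(\Ps^1,3)$ in Theorem~\ref{thm:introductionV}), so on the symmetric square the twist scales by $H(\mathbf y)^{9}$, whence summing the Le~Rudulier asymptotic $c_{\Sym^2\Ps^1,\Q} B^3$ over $\mathbf y \in \Ps^1(\Q)$, each with its twist, yields $c_{\Sym^2\Ps^1,\Q}\, Z_\Q(\Ps^1,9)\, B^3$. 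Here I would invoke a uniform version of Le Rudulier's / Schmidt's count with an explicit error term, so that the sum over the infinitely many fibres converges and the accumulated error is $O(B^2\log B)$; assembling this uniformity (with the right dependence on the twisting parameter) is the main technical obstacle, analogous to how the uniform count on lines fed into Theorem~\ref{thm:introductionV}.

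Finally I would bound cases (ii) and (iii) and the mixed base-line contributions and show they are absorbed into the error term. In case (iii), a pair on two distinct rational fibres $V_{\mathbf y_1}, V_{\mathbf y_2}$ has height the product of the two heights, and the number of such pairs of bounded height $B$ is controlled by $\sum_{\mathbf y_1,\mathbf y_2} N_{V_{\mathbf y_1}}(B_1) N_{V_{\mathbf y_2}}(B_2)$ with $B_1 B_2 \le B$; since each line contributes $O(B_i)$ points and the fibre heights grow, a convolution estimate gives $O(B^2 \log B)$ at worst. Case (ii), pairs on conjugate quadratic fibres, is parametrised by quadratic points $\mathbf y$ of $\Ps^1$ together with a point of the fibre $V_{\mathbf y}$ over the quadratic field; here one uses Schanuel over quadratic fields for the base together with the linear-in-$B$ count on the (now twisted) line, and again the growth of the twist keeps the total at $O(B^2\log B)$. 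Combining the main term from (i), the $+1$ from $V_{0,0}$, and the error bounds from (ii), (iii) and the base-line cross terms gives the stated asymptotic $c B^3 + O(B^2\log B)$ with $c = c_{\Sym^2\Ps^1,\Q}(Z_\Q(\Ps^1,9)+1)$.
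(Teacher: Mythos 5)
Your three-case decomposition by images in $\Ps^1$ is a valid reorganisation of the paper's split into ``both points rational'' (type 1) versus ``conjugate quadratic pair'' (type 2, further split into $U_1$ and $U_2$ according to whether the base parameter $\varphi(\mathbf x)$ is rational). Your case (i) is precisely the paper's set $\mathcal Z$, since Galois-invariance of the pair forces the common fibre parameter to be rational. On the main term the approaches are close in spirit: the paper also isolates the contribution with $(y_0,y_1)\in\Q^2$, but instead of invoking a per-fibre uniform version of Le Rudulier's count it reuses the explicit parametrisation and lattice-point estimate from Section~4 and then applies Schmidt's Theorem~3 once; this is exactly how it sidesteps the uniformity issue you flag as ``the main technical obstacle'' and leave unresolved.

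The genuine gap, however, is your case (ii). You assert that ``the growth of the twist keeps the total at $O(B^2\log B)$,'' but this is false as stated. Summing the count on each twisted line $V_{\mathbf y}$ over quadratic $\mathbf y\in\Ps^1(K)$ and then over quadratic fields $K$ gives, schematically,
\[
\sum_{[K:\Q]=2} c_{\Ps^1,K}\,Z_{K\setminus\Q}\!\left(\Ps^1,3\right) B^2,
\]
and since $Z_{K\setminus\Q}(\Ps^1,3)\asymp c_{\Ps^1,K}$ this is $\sum_K c_{\Ps^1,K}^2\,B^2$, which diverges as a sum over all quadratic fields. The twist factor $H_K(\mathbf y)^{-3}$ controls the sum within a single field, not the sum over fields. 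The paper closes this by two ingredients you do not mention: Silverman's lower bound for heights (\cite{Silverman}), which cuts the sum of fields off at $|\Delta|\leq 4B^2$; and the estimates of Schmidt (appendix proposition) and Le Rudulier (Th\'eor\`eme~5) bounding $\sum_{|\Delta|\le Y}L(1,\Delta)/L(2,\Delta)\ll Y$ and $\sum_{|\Delta|\le Y}L(1,\Delta)^2/L(2,\Delta)^2\ll Y$. After expressing $c_{\Ps^1,K_\Delta}$ in terms of $L(1,\Delta)/\bigl(\sqrt{|\Delta|}L(2,\Delta)\bigr)$ and applying Abel summation, these yield the $O(B^2\log B)$ bound. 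Without the discriminant cutoff and the $L$-function averages, your case~(ii) bound does not hold.
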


This agrees with Conjecture~\ref{conj:BT}, since the constant can be explained by a family of closed subschemes $V_{\mathbf y, \mathbf y} := \Sym^2 V_{\mathbf y}$ and one other $V_{0,0} := \Sym^2 V_0$. However to apply the conjecture one has to reduce to the closed subschemes $V_{0,0}$ and a certain subscheme $\mathbb V$ containing all $V_{\mathbf y, \mathbf y}$, since these do satisfy the first condition of Conjecture~\ref{conj:BT}.

One can compare this to a count done by Le Rudulier \cite{RudulierThesis}
\[
N_{\Sym^2(\Ps^1 \times \Ps^1),\Q}(B) = 2c_{\Sym^2 \Ps^1,\Q} Z_{\Q}\left(\Ps^1,6\right)B^3 + O\left( B^2 \log^3 B\right)
\]
as $B$ goes to infinity. The variety $\Ps^1\times \Ps^1$ has two fibrations into projective lines. Using this we have two families of projective planes in the form of $\Sym^2\Ps^1$ on $\Sym^2(\Ps^1 \times \Ps^1)$. Hence, there are even two positive dimensional families of targets for this counting problem and one can check that the result agrees with the expected behaviour predicted by Batyrev and Tschinkel.

This is particularly interesting since counting points on $\Sym^2(\Ps^1 \times \Ps^1)$ away from a closed subscheme gives $cB^2 \log^2 B$, as predicted by Manin's conjecture. We can however also explain the main term of this count using the Batyrev--Tschinkel conjecture, since their work \cite{BaTsch} naturally deals with such a thin sets of type~I. However, thin sets of type~II will also have to be incorporated since they are already needed in some cases to make sure the conjectured constant by Peyre is correct.

\subsection{Acknowledgements}

The authors are grateful for the continuous support of Tim Browning who pointed out the problem and supervised the Masters thesis of the first named author which contains some of the results of the present article. The second author received funding from the European Union's Horizon 2020 research and innovation programme under the Marie Sk\l odowska-Curie grant agreement No.~754411.

\section{The geometry of the non-normal cubic surface}

Let's consider the geometry of the non-normal surface.

\begin{Lemma}
Consider the scheme $W \subseteq \Ps^3$ given by $t_0^2t_2=t_1^2t_3$. The surface $W$ is non-normal and its singular locus is the \textit{singular line} $W_{\infty}$ given by $t_0=t_1=0$.
\end{Lemma}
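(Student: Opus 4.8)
The plan is to analyze the hypersurface $W = V(f) \subseteq \Ps^3$ with $f = t_0^2 t_2 - t_1^2 t_3$ by a direct Jacobian computation combined with Serre's criterion. First I would compute the partial derivatives: $\partial f/\partial t_0 = 2t_0 t_2$, $\partial f/\partial t_1 = -2t_1 t_3$, $\partial f/\partial t_2 = t_0^2$, $\partial f/\partial t_3 = -t_1^2$. A point of $W$ is singular precisely when all four vanish. From $t_0^2 = 0$ and $t_1^2 = 0$ we get $t_0 = t_1 = 0$, and conversely every point with $t_0 = t_1 = 0$ lies on $W$ (since $f$ vanishes identically there) and makes all partials vanish. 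Hence the singular locus of $W$ is exactly the line $W_\infty = \{t_0 = t_1 = 0\}$, which is what we want to identify.

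Next I would deduce non-normality. Since $W$ is a hypersurface in $\Ps^3$, it is Cohen–Macaulay, so by Serre's criterion it is normal if and only if it is regular in codimension one, i.e. $(R_1)$ holds. But the singular locus $W_\infty$ is a curve in the surface $W$, hence has codimension one, so $(R_1)$ fails and $W$ is non-normal. (Alternatively, and perhaps cleaner for the paper, one can exhibit the normalization explicitly: the map $\Ps^1 \times \Ps^1 \to W$, or a Hirzebruch-surface model, given in suitable coordinates, is finite birational but not an isomorphism, which already forces non-normality; but the Serre-criterion argument is shorter and self-contained.)

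I should also check that $W$ is genuinely a surface, i.e. irreducible of dimension two: $f$ is irreducible over $\bar K$ — it is not divisible by any linear form, as one sees by inspecting its monomials $t_0^2 t_2$ and $t_1^2 t_3$, which share no common variable — so $W$ is an integral hypersurface of dimension $2$, justifying the word \emph{surface}.

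The routine part is the derivative bookkeeping; there is essentially no obstacle here, the only point requiring a moment's care is the invocation of Serre's criterion, namely that a hypersurface is automatically Cohen–Macaulay so that non-normality follows purely from the codimension-one singular locus rather than needing a separate argument. I would state that explicitly so the reader does not worry about the $(S_2)$ condition.
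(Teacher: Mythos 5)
The paper states this lemma without a proof, treating it as routine, so there is no argument of the authors' to compare against. Your proposal is correct and is exactly the standard argument one would supply: the Jacobian $\bigl(2t_0t_2,\,-2t_1t_3,\,t_0^2,\,-t_1^2\bigr)$ vanishes precisely along $t_0=t_1=0$ (which indeed lies on $W$), and since a hypersurface in $\Ps^3$ is automatically Cohen--Macaulay and hence satisfies $(S_2)$, Serre's criterion reduces normality to $(R_1)$, which fails because the singular locus is a curve inside a surface. Your irreducibility check is a reasonable addition; to make the ``no common variable'' remark fully rigorous, view $f=t_0^2t_2-t_1^2t_3$ as a degree-one polynomial in $t_2$ over $K[t_0,t_1,t_3]$ with coprime coefficients $t_0^2$ and $-t_1^2t_3$, and apply Gauss's lemma. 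One could alternatively read off non-normality from the explicit normalisation $\rho\colon X\to W$ constructed in Theorem~\ref{thm:geometryW}, which is two-to-one over the generic point of $W_\infty$ and hence not an isomorphism; but since that theorem comes afterwards, your self-contained Serre-criterion argument is the cleaner choice at this point in the exposition.
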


We will write $V=W\backslash W_\infty$ for the regular locus of $W$. To explain the count of rational points of bounded heights on $V$ we will need a specific normal compactification of $V$, i.e.~a desingularisation of $W$.

\begin{Theorem}\label{thm:geometryW}
Let $x_0, x_1, x_2$, and $y_0, y_1$ be homogeneous coordinates on $\Ps^2$ and $\Ps^1$ and define $X$ to be the bilinear subscheme $X \subseteq \Ps^2 \times \Ps^1$ given by $x_0y_1=x_1y_0$.

The morphism $\rho \colon X \to W$ given by
\[
(x_0 \colon x_1 \colon x_2 \mid y_0 \colon y_1) \mapsto (x_2y_0 \colon x_2y_1 \colon x_1y_1 \colon x_0y_0)
\]
is the minimal desingularisation and the normalisation of $W$. It restricts to an isomorphism $X \backslash \{x_2=0\} \to V$.
\end{Theorem}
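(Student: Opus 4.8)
The plan is to verify the three claims of Theorem~\ref{thm:geometryW} in turn: that $\rho$ is a well-defined morphism, that $X$ is smooth and that $\rho$ is birational (hence the minimal desingularisation and normalisation), and finally that $\rho$ restricts to the stated isomorphism onto $V$.

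First I would check that $\rho$ lands in $W$ and is a morphism. Writing $(t_0\colon t_1\colon t_2\colon t_3) = (x_2 y_0 \colon x_2 y_1 \colon x_1 y_1 \colon x_0 y_0)$, the defining equation $t_0^2 t_2 = t_1^2 t_3$ becomes $x_2^2 y_0^2 \cdot x_1 y_1 = x_2^2 y_1^2 \cdot x_0 y_0$, i.e. $x_2^2 y_0 y_1 (x_1 y_0 - x_0 y_1) = 0$, which vanishes on $X$ by the relation $x_0 y_1 = x_1 y_0$. To see $\rho$ is everywhere defined one covers $X$ by the standard affine charts of $\Ps^2 \times \Ps^1$: on $\{y_0 \neq 0\}$ the relation gives $x_0 = x_1 y_1/y_0$, so $X$ is parametrised by $(x_1, x_2, y_1/y_0)$ and $\rho$ has no common zero of the coordinates unless $x_1 = x_2 = 0$, but then $x_0 = 0$ too, contradicting being a point of $\Ps^2$; the chart $\{y_1 \neq 0\}$ is symmetric, and on the locus where some $x_i \neq 0$ one argues similarly. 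So $\rho$ is a morphism.

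Next, smoothness of $X$: the subscheme $x_0 y_1 = x_1 y_0$ of $\Ps^2 \times \Ps^1$ is the classical example of a scroll/blow-up — in the chart $\{y_0 \neq 0\}$ it is cut out in $\mathbb A^3$ by $x_0 - x_1 (y_1/y_0)$, visibly smooth, and likewise in $\{y_1 \neq 0\}$; these two charts cover $X$ since $y_0, y_1$ cannot both vanish. In fact $X$ is the blow-up of $\Ps^2$ in the point $(0\colon 0\colon 1)$, with exceptional curve $\{x_2 = 0\}$, so $X$ is a smooth projective rational surface. To identify $\rho$ with the minimal desingularisation and the normalisation of $W$, I would show $\rho$ is birational: away from the problematic loci it has an explicit inverse $(t_0\colon t_1\colon t_2\colon t_3)\mapsto (t_3 t_0 \colon t_3 t_1 \colon t_0^2 \mid t_0 \colon t_1)$ or the analogous expression using $t_2$, matching up via the relations on $W$; one checks these glue to a rational inverse. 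Since $\rho$ is a proper birational morphism from a smooth surface and $W$ is a normal-crossing-free\dots rather, since $W$ has a one-dimensional singular locus and $\rho$ is finite birational over a neighbourhood of the generic point of $W_\infty$ (the fibre over a general point of the singular line is a single reduced point), $\rho$ is the normalisation; minimality of the desingularisation follows because $X$ contains no $(-1)$-curve contracted by $\rho$ — the only curve contracted is $\{x_2 = 0\}$, which maps to the singular line, not to a point.

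Finally, the isomorphism statement. On $U := X \setminus \{x_2 = 0\}$ we may normalise $x_2 = 1$; then $X$ is parametrised freely by $(x_0, x_1 \mid y_0 \colon y_1)$ with $x_0 y_1 = x_1 y_0$, and $\rho$ becomes $(x_0, x_1 \mid y_0 \colon y_1) \mapsto (y_0 \colon y_1 \colon x_1 y_1 \colon x_0 y_0)$. The image avoids the singular line $\{t_0 = t_1 = 0\}$ since $(t_0\colon t_1) = (y_0\colon y_1)$ is a genuine point of $\Ps^1$, so $\rho(U) \subseteq V$; conversely, given $(t_0\colon t_1\colon t_2\colon t_3) \in V$ with, say, $t_0 \neq 0$, scale so that $t_0 = 1$, then necessarily $t_2 = t_1^2 t_3$ and the point is $\rho(t_3, t_1 t_3 \mid 1 \colon t_1)$, and similarly for $t_1 \neq 0$; these pre-images are forced, giving a two-sided inverse morphism $V \to U$. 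Checking it is a morphism (not merely a bijection) is a short local computation in the charts.

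\medskip
The step I expect to be the main obstacle is the birationality/normalisation claim: writing down an explicit rational inverse to $\rho$ that is consistent across the charts of $W$ — the expressions for the inverse involve dividing by $t_0$, $t_1$, $t_2$ or $t_3$ and one must check they agree on overlaps using the equation of $W$ — and then arguing cleanly that $\rho$ is exactly the normalisation rather than merely dominating it (this needs that $\rho$ is finite, which one sees from the fibre analysis: over $V$ it is an isomorphism, and over the generic point of $W_\infty$ the fibre is the single reduced point cut out by $x_2 = 0$ together with the relation, since $y_0\colon y_1$ is then determined by $t_2\colon t_3$). Everything else is routine chart-by-chart bookkeeping.
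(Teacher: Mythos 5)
Your proposal is correct in substance and follows essentially the same route as the paper: verify $\rho$ is a morphism, exhibit an explicit birational inverse, show $\rho$ is finite (hence the normalisation) and minimal, and finally check the isomorphism chart-by-chart on $\{x_2\neq 0\}$. The paper dispatches the first step more slickly via the algebraic identity $(x_2y_0:x_2y_1:x_1y_1:x_0y_0)=(x_2x_0:x_2x_1:x_1^2:x_0^2)$, whose two sides have complementary base loci on $X$, and for minimality it simply notes that the fibres of $\rho$ are $0$-dimensional and cites \cite[Corollary~27.3]{LipmanRS} rather than discussing $(-1)$-curves; but your direct argument arrives at the same place once one observes that $\rho$ is finite and hence contracts no curve whatsoever.

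One factual slip worth fixing: the exceptional curve of the blow-up $X=\mathrm{Bl}_{(0:0:1)}\Ps^2$ is $\{x_0=x_1=0\}$, not $\{x_2=0\}$. The curve $\{x_2=0\}\subseteq X$ is the total transform of the line $\{x_2=0\}\subseteq\Ps^2$ (which does not pass through the centre), and under $\rho$ it is the curve mapping $2:1$ onto the singular line $W_\infty$; the actual $(-1)$-curve $\{x_0=x_1=0\}$ is mapped isomorphically by $\rho$ onto the base line $V_0=\{t_2=t_3=0\}$. Neither is contracted to a point, so your conclusion stands, but the two curves play different roles and should not be conflated. Your sentence ``the only curve contracted is $\{x_2=0\}$, which maps to the singular line, not to a point'' is also internally contradictory as written — what you mean is that no curve is contracted at all, which is the key point and is what the $0$-dimensionality of the fibres gives you directly.
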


\begin{proof}
The equality
\[
(x_2y_0 \colon x_2y_1 \colon x_1y_1 \colon x_0y_0) = (x_2x_0 \colon x_2x_1 \colon x_1^2 \colon x_0^2)
\] shows that $\rho$ defines an actually morphism on $X$. Its birational inverse is given by
\[
(t_0 \colon t_1 \colon t_2 \colon t_3) \mapsto (t_0 \colon t_1 \colon \frac{t_1^2}{t_2}t_1t_0 \colon t_1)
\]
which is defined on $V$ since $\frac{t_1^2}{t_2} = \frac{t_0^2}{t_3}$. This proves that $\rho$ is a proper birational map and hence a desingularisation of $W$. Since the fibres of $\rho$ are all $0$-dimensional we conclude that it is even the minimal desingularisation \cite[Corollary~27.3]{LipmanRS}.

We will now show that $\rho$ is the normalisation of $W$. The morphism $\rho$ is quasi-finite and hence finite, since it is proper. From this we conclude that $\rho$ is the normalisation of $W$ by \cite[0AB1]{stacks-project}.
\end{proof}

This shows that $V$ contains many lines, which should contribute to the point count. Let us make this precise.

\begin{Definition}\label{dfn:fibrationmap}
Let $\varphi \colon X \to \Ps^1$ be the morphism given by the projection $\Ps^2 \times \Ps^1 \to \Ps^1$. The fibre of the composition $V \subseteq X \to \Ps^1$ over a point $\mathbf y \in \Ps^1(\Q)$ will be denoted by $V_{\mathbf y}$.

The line on $V$ given by $t_2=t_3=0$ will be denoted by $V_0$ and be called the \textup{base line}.
\end{Definition}

Every fibre of $\varphi$ is a projective line and hence every $V_{\mathbf y}$ is a linearly embedded affine line. This fits with Conjecture~2.4.3 in \cite{BaTsch}, which state that in this case there should be a natural fibration $X \to Y$. This is the motivation for our notation $\mathbf y=(y_1 \colon y_2)$ for a point on the base $\Ps^1$.

\subsection{The height function}
Next we will need to understand the height function on $V(K)$ in geometric terms for any number field $K$. Let us first fix for every place $\nu$ of $K$ the valuation $|x|_\nu = |N_{K_\nu/\Q_p}(x)|_p^{-[K_\nu \colon \Q_p]}$ where $p$ is the rational prime lying under $\nu$. We will use this to define an adelic metrization of the line bundle $M=\mathcal O_W(1)$ on $W$. Let $s_0, s_1, s_2, s_3$ be a basis for the global sections of $M$ on $W$ such that $s_i$ corresponds to the variable $t_i$. For a place $\nu$, a point $\mathbf x_\nu \in W(K_\nu)$ and a section $s \in \Gamma(M)$ non-vanishing at $\mathbf x_\nu$ we define
\[
\|s(\mathbf x_\nu)\|_\nu := \inf |(s/s_i)(\mathbf x_\nu)|_\nu^{[K_\nu \colon \Q_p]},
\]
where the infimum is taking over all sections $s_i$ for which $s_i(\mathbf x_\nu)$ is non-zero. This defines an adelic metric on $M$ and we get an associated height function
\[
W(K) \to \mathbb R, \quad \mathbf x \mapsto H_K(\mathbf x) := \prod_{\nu \in M_K} \|s(\mathbf x)\|^{-1}_\nu
\]
defined using a section $s \in \Gamma(M)$ non-vanishing at $\mathbf x$. This is independent of the choice of $s$ and agrees with the height function
\begin{equation}\label{eq:height}
\mathbf x \mapsto H_K(\mathbf x) = \prod_{\nu \in M_K} \max_{i} \left\{ \left| x_i\right|_{\nu} \right\}^{[K_\nu \colon \Q_p]},
\end{equation}
mentioned in the introduction. We will be interested in
\[
N_{V,K}(B) := \#\{ \mathbf x\in V(K)\; |\; H_K(\mathbf x) \leq B\}
\]
as $B$ goes to infinity.

The asymptotic behaviour for such counting problem is predicted by Conjecture~\ref{conj:Manin}. Namely, one expects that $N_{V,K}(B) \sim c B^a \log^{b-1} B$. Here the conjectured constants $a$, $b$ and $c$ depend on the geometry of $V$ together with the line bundle $L = \left.M\right|_V$ endowed with a Hermitian metric on $V_{\C}$, which we will denote by $\|.\|_h$. So let us write $\mathcal L=(L,\|.\|_h)$ for the restriction to $V$ of this ample metrized invertible sheaf $(M,\|.\|_h)$.

\subsection{The $\mathcal L$-primitive closure}
We now follow Batyrev and Tschinkel to check that $X$ is actually the natural compactification of $V$ given $\mathcal L$. To that end we introduce for each $k \geq 0$ the metrized line bundle $\mathcal L^{\otimes k}=(L^{\otimes k},\|.\|_{h,k})$, here $\|.\|_{h,k}$ is a Hermitian metric on $L^{\otimes k}$ such that for a section $s\in \Gamma(L)$ and a point $\mathbf x \in V(\C)$ we have
\[
\|s^k(\mathbf x)\|_{k,h} = \|s(\mathbf x)\|^k_h. 
\]

\begin{Definition}
Let $\H^0\bd(V,\mathcal L^{\otimes k})$ be the set of sections $s \in \H^0(V,L^{\otimes k})$ for which $\|s\|_{h,k}$ is bounded on $V(\C)$. Also set
\[
A(V,\mathcal L)=\bigoplus_{k\geq 0} \H^0\bd(V,\mathcal L^{\otimes k}).
\]
\end{Definition}

The conjecture of Batyrev and Tschinkel is formulated in terms of $\Proj A(V,\mathcal L)$.

\begin{Lemma}
There is a natural identification between $X$ and $\Proj A(V,\mathcal L)$.
\end{Lemma}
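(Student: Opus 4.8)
The strategy is to compute the graded ring $A(V,\mathcal L)$ explicitly and recognize $\Proj$ of it as $X$. Recall $V = W \setminus W_\infty$ and, by Theorem~\ref{thm:geometryW}, $\rho$ restricts to an isomorphism $X \setminus \{x_2 = 0\} \xrightarrow{\sim} V$, so we may work on the open subvariety $U := X \setminus \{x_2=0\}$ of $X$. The line bundle $L$ is the restriction to $V$ of $M = \mathcal O_W(1)$, which pulls back under $\rho$ to $\rho^*\mathcal O_W(1) = \mathcal O_X(1,1)|_{\text{(on }X\subseteq\Ps^2\times\Ps^1)}$ by the formula for $\rho$; call this $\mathcal M$. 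So the task is: show that $\bigoplus_{k\ge 0}\H^0_{\textup{bd}}(U, \mathcal M^{\otimes k})$ is, as a graded ring, the homogeneous coordinate ring of $X$ under its embedding by $\mathcal M$ (or by a Veronese re-embedding of it), and that this identification is compatible with the metric-boundedness condition. Concretely, I would first identify $\H^0(X, \mathcal M^{\otimes k})$: since $X \subseteq \Ps^2\times\Ps^1$ is the divisor $x_0y_1 = x_1y_0$ of bidegree $(1,1)$, one has $\H^0(X,\mathcal M^{\otimes k}) = \H^0(\Ps^2\times\Ps^1, \mathcal O(k,k))/(x_0y_1-x_1y_0)\cdot\H^0(\mathcal O(k-1,k-1))$, which one can write down by a dimension count and exhibit explicit monomial representatives.

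The core of the argument is then the boundedness analysis. A section $s \in \H^0(U,\mathcal M^{\otimes k})$, which is a section over $X$ possibly with poles along $\{x_2=0\}$, lies in $\H^0_{\textup{bd}}$ precisely when $\|s\|_{h,k}$ is bounded on $V(\C) = U(\C)$. Since the metric is the one induced from $\mathcal O_W(1)$ via the standard construction in \eqref{eq:height}, I would compute $\|s(\mathbf x)\|$ at a point $\mathbf x = (x_2y_0 : x_2y_1 : x_1y_1 : x_0y_0) \in V$ by comparing $s$ against the sections $s_0,\dots,s_3$ corresponding to $t_0,\dots,t_3$ and taking the infimum of the ratios. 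The upshot should be that a rational section of $\mathcal M^{\otimes k}$ on $X$ is bounded on $V(\C)$ if and only if it is regular on all of $X$ — i.e.\ has no pole along the divisor at infinity $\{x_2=0\}$ — because near that divisor the sections $s_i$ that are non-vanishing force any bounded section to extend. This would give $\H^0_{\textup{bd}}(V,\mathcal L^{\otimes k}) = \H^0(X,\mathcal M^{\otimes k})$ for all $k \ge 0$, and hence a graded ring isomorphism $A(V,\mathcal L) \cong \bigoplus_k \H^0(X,\mathcal M^{\otimes k})$.

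Finally, I would invoke that $X$, being a smooth projective surface with $\mathcal M$ a (semi-)ample line bundle having no base locus, satisfies $X \cong \Proj \bigoplus_{k\ge 0}\H^0(X,\mathcal M^{\otimes k})$; if $\mathcal M$ itself is not very ample one passes to a Veronese subring, which does not change the $\Proj$. Combined with the previous step this yields the natural identification $X \cong \Proj A(V,\mathcal L)$, and one checks the identification is the one restricting to the identity on $V$.

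The main obstacle I anticipate is the boundedness computation: one must carefully track, near the boundary divisor $\{x_2 = 0\}$ inside $X(\C)$, which of the $t_i$ vanish and which do not, and verify that the infimum defining $\|\cdot\|$ genuinely blows up for a section with a pole there (so that no such section is admissible) while staying bounded for a regular section. There is also a bookkeeping subtlety in that $\mathcal M = \mathcal O_X(1,1)$ restricted from $\Ps^2 \times \Ps^1$ may only be semiample or not very ample on $X$, so one should be a little careful to phrase the $\Proj$ statement for the full section ring (or an appropriate Veronese) rather than for a single $\H^0$; this is routine once the ring is identified but worth stating cleanly.
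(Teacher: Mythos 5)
Your plan follows the same two-step structure as the paper: first identify $\H^0\bd(V,\mathcal L^{\otimes k})$ with $\H^0(X,\mathcal O_X(1,1)^{\otimes k})$, then use ampleness to recover $X$ as $\Proj$ of the section ring. Where you differ is in the execution of the first, harder step. You propose an explicit monomial bookkeeping exercise near the boundary divisor $\{x_2=0\}$, tracking which $s_i$ vanish and estimating $\inf_i |s/s_i|$ by hand. The paper (following de la Br\`eteche--Browning--Salberger) instead uses two soft facts: compactness of $X(\C)$ gives that regular sections of $\mathcal O_X(1,1)^{\otimes k}$ have bounded norm, and normality of $X$ (plus GAGA) gives that a bounded analytic section on $V(\C)$ extends to all of $X(\C)$ and is then algebraic. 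This avoids all explicit local computation and is both shorter and less error-prone; your version, carried out, would essentially be re-proving the Riemann extension theorem in coordinates.

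Two smaller remarks. First, you hedge that $\mathcal O_X(1,1)$ ``may only be semiample or not very ample on $X$'' and suggest passing to a Veronese subring; this precaution is unnecessary, since $\mathcal O(1,1)$ is very ample on $\Ps^2\times\Ps^1$ (Segre) and very ampleness is preserved under restriction to closed subschemes, so $\mathcal O_X(1,1)$ is already very ample on $X$. Second, your phrasing ``verify that the infimum defining $\|\cdot\|$ genuinely blows up for a section with a pole'' is the contrapositive of what you need but omits a small preliminary point that should be stated: every $s\in\H^0(V,L^{\otimes k})$ is automatically a \emph{rational} section of $\mathcal M^{\otimes k}$ on $X$, because $V$ is a dense open in the integral scheme $X$; only after this observation does the dichotomy ``pole versus no pole'' make sense. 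Neither remark is a genuine gap, but both would need to be addressed to turn the sketch into a proof.
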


\begin{proof}
We follow the proof of Proposition 2.3 in \cite{BBS}. Details can be found there. By compactness of $X(\C)$ we have that the natural morphism
\[
\H^0(X,\mathcal O^{\otimes k}_X(1,1)) = \H^0(X, \varphi^*M^{\otimes k})\to \H^0(V,L^{\otimes k}) 
\]
factors through $\H^0\bd(V,\mathcal L^{\otimes k})$. Since $X$ is normal we can extend any bounded section on $V$ to a global analytic section on $X(\C)$, but such a section is also algebraic.

Since $\mathcal O_X(1,1)$ is ample we see from \cite[0C6J]{stacks-project} that the natural morphism
\[
X \to \Proj\left( \bigoplus_k \H^0\left(X,\mathcal O^{\otimes k}_X(1,1)\right)\right) \cong \Proj A(V,\mathcal L)
\]
is an isomorphism.
\end{proof}

\section{The conjectured behaviour of $N_{V,K}(B)$}

Since $V$ has infinitely many closed subschemes $V_{\mathbf y}$ which satisfy
\[
0 < \lim_{B \to \infty} \frac{N_{V_{\mathbf y},K}(B)}{N_{V,K}(B)} < 1
\]
there is no locally closed subscheme $V' \subsetneq V$ such that
\[
\lim_{B \to \infty} \frac{N_{V',K}(B)}{N_{V,K}(B)} = 1.
\]
In the language of \cite{BaTsch} we see that $V$ is \textit{weakly $\mathcal L$-saturated}; there is no lower dimensional subscheme $V' \subseteq V$ which explains the main term of the counting function. On the other hand, $V$ is not \textit{strongly $\mathcal L$-saturated}; there are infinitely many lower dimensional subschemes which do contribute to the main term.

\begin{Proposition}
The $\mathcal L$-targets for $V$ are the lines $V_{\mathbf y}$ together with $V_0$. 
\end{Proposition}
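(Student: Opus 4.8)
The plan is to combine the explicit model $\rho\colon X\to W$ of Theorem~\ref{thm:geometryW} with the observation, recorded above, that $V$ is weakly but not strongly $\mathcal L$-saturated. Unwinding the definition of an $\mathcal L$-target in \cite{BaTsch}, and using that $V$ is not strongly $\mathcal L$-saturated, it suffices to determine the \emph{minimal} closed subvarieties $Y\subsetneq V$ whose closure $\overline Y\subseteq X$ satisfies: the restriction of $M=\rho^*\mathcal O_W(1)$ to $\overline Y$ has the same $a$- and $b$-invariants as $(X,M)$, and $M|_{\overline Y}$ is close to the anticanonical class in the sense of \cite[Definition~2.3.4]{BaTsch}. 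Since a $0$-dimensional subvariety has bounded counting function and $V$ is a surface, every such $Y$ is a curve, and the task reduces to enumerating the relevant curves on $X$.

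I would first record the geometry of $(X,M)$. The projection to $\Ps^2$ exhibits $X$ as the Hirzebruch surface $\mathbb{F}_1$, namely the blow-up of $\Ps^2$ at the point $(0:0:1)$, with exceptional $(-1)$-curve $C_0=\{x_0=x_1=0\}$; writing $f$ for the class of a fibre of $\varphi$ (Definition~\ref{dfn:fibrationmap}) one has $\Pic(X)=\Z C_0\oplus\Z f$ with $C_0^2=-1$, $C_0\cdot f=1$, $f^2=0$, $-K_X=2C_0+3f$, and $\overline{\mathrm{Eff}}(X)=\R_{\ge 0}C_0+\R_{\ge 0}f$. Because $\rho$ restricts to an isomorphism $C_0\xrightarrow{\ \sim\ }V_0$ and to isomorphisms $\varphi^{-1}(\mathbf y)\setminus\{x_2=0\}\xrightarrow{\ \sim\ }V_{\mathbf y}$, and both $V_0$ and the closures of the $V_{\mathbf y}$ are lines in $\Ps^3$, one gets $M\cdot C_0=M\cdot f=1$ and hence $M=C_0+2f$; a short computation then yields $a(X,M)=2$ and $b(X,M)=1$, matching the exponents in Theorem~\ref{thm:introductionV}. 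Now if $Y\subsetneq V$ is a curve with $\nu\colon\widetilde Y\to\overline Y$ the normalisation of its closure, then $a(\overline Y,M|_{\overline Y})=a(\widetilde Y,\nu^*M)$, and the inequality $a\,(M\cdot\overline Y)+(2g(\widetilde Y)-2)\ge 0$ forces this invariant to equal $2$ only when $g(\widetilde Y)=0$ and $M\cdot\overline Y=1$; since $M$ is very ample on $\mathbb{F}_1$, the curve $\overline Y$ is then a line, so $\overline Y\cong\Ps^1$ and $M|_{\overline Y}\cong\mathcal O_{\Ps^1}(1)$.

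It then remains to enumerate the curves of $M$-degree $1$ on $X$ and to check that each occurs as a target. Writing $\overline Y\equiv\alpha C_0+\beta f$ with $\alpha,\beta\ge 0$, the identity $M\cdot\overline Y=\alpha+\beta$ forces $(\alpha,\beta)\in\{(1,0),(0,1)\}$. In the first case $\overline Y\cdot C_0=-1<0$ forces $\overline Y=C_0$, and since $C_0$ is disjoint from $\{x_2=0\}$ this curve is the closure of the base line $V_0$; in the second case $\overline Y$ is an irreducible curve in the fibre class of $\varphi$, hence a fibre $X_{\mathbf y}$, and $X_{\mathbf y}\setminus(X_{\mathbf y}\cap\{x_2=0\})=V_{\mathbf y}$. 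Conversely, each of these $Y$ is indeed an $\mathcal L$-target: $M|_{\overline Y}\cong\mathcal O_{\Ps^1}(1)$ satisfies $2\,M|_{\overline Y}=\omega_{\overline Y}^\vee$, so it is close to the anticanonical class with $a=2$ and $b=1$, and $Y$ is minimal with this property because its proper closed subvarieties are finite sets of points, which are never $\mathcal L$-targets. Hence the $\mathcal L$-targets for $V$ are precisely the lines $V_{\mathbf y}$, $\mathbf y\in\Ps^1(K)$, together with $V_0$.

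The step I expect to be the main obstacle is matching this geometric picture with the precise framework of \cite{BaTsch}: in particular, verifying that the \emph{metrized} restriction $M|_{\overline Y}$, not merely its underlying invertible sheaf, is close to the anticanonical class in the sense of \cite[Definition~2.3.4]{BaTsch}, which relies on the identification $X\cong\Proj A(V,\mathcal L)$ established above and on the compatibility of the chosen metric on $M$ with its restrictions to $C_0$ and to the fibres of $\varphi$. Once that bookkeeping is in place, the heart of the argument is the elementary intersection theory on $\mathbb{F}_1$ carried out above.
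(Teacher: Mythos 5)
Your proof is correct but follows a genuinely different route from the paper. The paper's argument is short and direct: it appeals to the well-known fact that a rational curve of degree $d$ in $\Ps^3$ accumulates $\asymp B^{2/d}$ rational points while positive-genus curves accumulate far fewer, so only lines can contribute to the main term $B^2$; it then computes the Fano variety of lines of $V$ and finds a one-dimensional component (the $V_{\mathbf y}$) plus an isolated point (the base line $V_0$). You instead work entirely inside the Batyrev--Tschinkel framework on the resolution: you identify $X$ with $\mathbb F_1$, compute $\Pic$, $\overline{\mathrm{Eff}}$, $-K_X$, and $M=C_0+2f$, derive $a(X,M)=2$ and $b(X,M)=1$, and then use the Fujita-invariant inequality on the normalisation to force $g=0$ and $M\cdot\overline Y=1$; finally you enumerate $M$-degree-$1$ curves by intersection theory rather than by solving for the Fano scheme. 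Both are sound; the paper's version is more elementary and buys brevity by citing known asymptotics for points on curves, whereas yours is more self-contained on the geometric side, replaces the Fano-scheme computation by a cleaner cone argument on $\mathbb F_1$, and makes visible exactly which Batyrev--Tschinkel invariants drive the reduction to lines. The caveat you flag at the end (checking that the \emph{metrized} restrictions are close to anticanonical, and that each candidate really realises $\theta_\alpha>0$) is indeed the point where one still needs Schanuel's theorem on $\Ps^1$; the paper's point-counting formulation builds that in automatically.
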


\begin{proof}
It is well-known that a rational curve on $V$ contibutes more to the point count than any curve of positive genus. For any rational curve of degree $d$ the contribution to the point count is $B^{\frac2d}$. Hence only the linearly embedded rational curves on $V \subseteq \Ps^3$ contribute to the main term $B^2$.  One can explicitly compute the Fano variety of lines of $V$ and show that it consists of a one dimensional component and a point, corresponding respectively to the $V_{\mathbf y}$ and $V_0$. 
\end{proof}

Note that $V_0$ and the $V_{\mathbf y}$ are strongly $\mathcal L$-saturated. So we can compute $c(V)$ as $c(V_0)+\sum_{\mathbf y \in \Ps^1(K)} c(V_{\mathbf y})$. We will compare each $c(V_{\mathbf y})$ with the $c(\Ps^1_K)$ where we have endowed $\Ps^1$ with the adelic line bundle $(\mathcal O(1),\|.\|_{\Ps^1,\nu})$ given by the minimal norm on all places.
\begin{Proposition}\label{prop:VinBTterms}
We have
\[
a(V_{\mathbf y})=a(V_0)=a(\Ps^1_K)=2,
\]
\[
b(V_{\mathbf y})=b(V_0)=b(\Ps^1_K)=1
\]
and
\[
c(V_0)=c(\Ps^1_K) \text{ and } c(V_{\mathbf y})=\frac{c(\Ps^1_K)}{H_{\Ps^1}(\mathbf y)^3}.
\]
\end{Proposition}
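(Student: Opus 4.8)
I would split the argument into the geometric part — the invariants $a$ and $b$ — and the analytic part — the constants $c$. For $a$ and $b$ the plan is to recognise, in each of the three cases, the completed curve together with the restriction of the relevant line bundle. The closure of $V_{\mathbf y}$ inside $X$ is the fibre $\varphi^{-1}(\mathbf y)$, which is a line in the $\Ps^2$-factor, hence a $\Ps^1$; since the coordinate sections $\rho^{*}t_i$ are bihomogeneous of bidegree $(1,1)$ we have $M=\rho^{*}\mathcal O_W(1)=\mathcal O_X(1,1)$, which restricts to $\mathcal O_{\Ps^1}(1)$ on $\varphi^{-1}(\mathbf y)$. For the base line I would check that $\rho^{-1}(V_0)$ equals the section $\{(0:0:1)\}\times\Ps^1$ of $\varphi$, again a $\Ps^1$ on which $M$ restricts to $\mathcal O(1)$, while $\Ps^1_K$ carries $\mathcal O(1)$ by construction. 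It then suffices to compute $a$ and $b$ for the pair $(\Ps^1,\mathcal O(1))$: with $\overline{\mathrm{Eff}}(\Ps^1)=\R_{\ge 0}[\mathrm{pt}]$, $\mathcal O(1)=[\mathrm{pt}]$ and $K_{\Ps^1}=-2[\mathrm{pt}]$, the class $\gamma[\mathcal O(1)]+[K_{\Ps^1}]=(\gamma-2)[\mathrm{pt}]$ is effective exactly for $\gamma\ge 2$, so $a=2$, and the minimal face of $\overline{\mathrm{Eff}}$ through $2[\mathcal O(1)]+[K_{\Ps^1}]=0$ is the vertex, of codimension $1$ in $\Pic(\Ps^1)_{\R}$, so $b=1$. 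I would also record that $2\,\mathcal O(1)=\omega_{\Ps^1}^{\vee}$, so all three bundles become anticanonical after rescaling by $a$, which is condition~(2) of Conjecture~\ref{conj:BT}.

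For the constants, the cases of $V_0$ and $\Ps^1_K$ are immediate: on $V_0=\{t_2=t_3=0\}$ the height \eqref{eq:height} is $\mathbf x\mapsto\prod_{\nu}\max\{|t_0|_\nu,|t_1|_\nu\}$, the usual height on $\Ps^1$, which coincides with the height used on $\Ps^1_K$; hence $N_{V_0,K}(B)=N_{\Ps^1,K}(B)$ and $c(V_0)=c(\Ps^1_K)=c_{\Ps^1,K}$ by Schanuel's theorem. For $V_{\mathbf y}$ the plan is to make the induced height explicit via the parametrisation from Theorem~\ref{thm:geometryW}, under which a point of $V_{\mathbf y}(K)$ is $(y_0:y_1:wy_1^2:wy_0^2)$ with $w\in K$. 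Writing $m_\nu=\max\{|y_0|_\nu,|y_1|_\nu\}$, a short computation with \eqref{eq:height} gives
\[
H_K\big(y_0:y_1:wy_1^2:wy_0^2\big)=H_{\Ps^1,K}(\mathbf y)\cdot\widetilde H_{\mathbf y}(w),\qquad \widetilde H_{\mathbf y}(w):=\prod_{\nu}\max\{1,m_\nu|w|_\nu\},
\]
so that $N_{V_{\mathbf y},K}(B)=\#\{\,w\in K:\widetilde H_{\mathbf y}(w)\le B/H_{\Ps^1,K}(\mathbf y)\,\}$.

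Finally, $\widetilde H_{\mathbf y}$ is the height attached to $\mathcal O(1)$ on $\Ps^1$ whose adelic metric is the standard one twisted by the factor $m_\nu$ at each place $\nu$; this twist is trivial away from the finitely many non-archimedean $\nu$ dividing the coordinates of $\mathbf y$ together with the archimedean places, and $\prod_{\nu}m_\nu=H_{\Ps^1,K}(\mathbf y)$. I would finish by running Schanuel's counting argument (equivalently, computing Peyre's constant) for this twisted metric: the twist multiplies the $\nu$-adic local density by $m_\nu^{-1}$, so the leading constant is multiplied by $\prod_\nu m_\nu^{-1}=H_{\Ps^1,K}(\mathbf y)^{-1}$, giving $\#\{w\in K:\widetilde H_{\mathbf y}(w)\le T\}\sim c_{\Ps^1,K}\,H_{\Ps^1,K}(\mathbf y)^{-1}T^2$. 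Substituting $T=B/H_{\Ps^1,K}(\mathbf y)$ yields $N_{V_{\mathbf y},K}(B)\sim c_{\Ps^1,K}\,H_{\Ps^1,K}(\mathbf y)^{-3}B^2$, i.e.\ $a(V_{\mathbf y})=2$, $b(V_{\mathbf y})=1$ and $c(V_{\mathbf y})=c(\Ps^1_K)/H_{\Ps^1}(\mathbf y)^3$. The one step that is not formal is this twisted Schanuel count, which is where I expect the real (but routine) work to lie, since the metric differs from the classical one only at finitely many places and the effect of each twist on the leading constant is a single explicit scalar. As a consistency check, $c(V_0)+\sum_{\mathbf y\in\Ps^1(K)}c(V_{\mathbf y})=c_{\Ps^1,K}\big(1+Z_K(\Ps^1,3)\big)$ reproduces the constant of Theorem~\ref{thm:introductionV}(d).
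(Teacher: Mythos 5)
Your argument is correct, and the identification of $a$ and $b$, together with $c(V_0)=c(\Ps^1_K)$, is the same as the paper's. For $c(V_{\mathbf y})$ the computation is equivalent but organised differently: the paper works directly with the Batyrev--Tschinkel definition, pulling back the Tamagawa density along the parametrisation $\rho_{\mathbf y}(\tau_0:\tau_1)=(y_0\tau_1:y_1\tau_1:y_1^2\tau_0:y_0^2\tau_0)$, factoring $m_\nu^{-4[K_\nu:\Q_p]}$ out of the integrand and picking up $m_\nu^{+[K_\nu:\Q_p]}$ from the change of variables $\tau\mapsto m_\nu\tau$, for a net $\tau_\nu(V_{\mathbf y})=m_\nu^{-3[K_\nu:\Q_p]}\tau_\nu(\Ps^1_K)$. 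You instead factor the height itself as $H_K|_{V_{\mathbf y}}=H_{\Ps^1,K}(\mathbf y)\cdot\widetilde H_{\mathbf y}$ and split the exponent $3=1+2$: the metric twist contributes a single $H_{\Ps^1,K}(\mathbf y)^{-1}$ to the leading constant of the twisted Schanuel count, and the remaining $H_{\Ps^1,K}(\mathbf y)^{-2}$ comes from the rescaling $T=B/H_{\Ps^1,K}(\mathbf y)$. The two bookkeepings are the same change of variables, but yours makes the exponent more transparent, which is a genuine pedagogical gain. Two small points worth making explicit: first, the proposition is a computation of the Batyrev--Tschinkel constants, which are defined by the Tamagawa integral, so strictly speaking your route relies on the fact that for a metrized $\mathcal O(1)$ on $\Ps^1$ these coincide with the leading constants of the Schanuel-type count -- true, but it should be said; second, the ``twisted Schanuel'' step that you flag as the non-formal part is precisely the local change of variables $w\mapsto m_\nu w$ giving a factor $m_\nu^{-[K_\nu:\Q_p]}$ per place, which is what the paper actually carries out, so the gap you acknowledge is filled by an argument essentially identical to the one in the text.
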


\begin{proof}
The expected constant is defined \cite[Section~3.4]{BaTsch} as
\begin{equation}\label{eq:expectedconstant}
c(V_{\mathbf y}) := \frac{\gamma(V_{\mathbf y})}{a(V_{\mathbf y})\left(b(V_{\mathbf y})-1 \right)!} \delta(V_{\mathbf y}) \tau(V_{\mathbf y})
\end{equation}
All of these constants except $\tau$ only depend on a chosen desingularisation of the $\mathcal L$-primitive closure. Let us consider those constants first.

The polarized schemes $(V_0,\left.\mathcal L\right|_{V_0},\|.\|_h)$ and $(\Ps^1, \mathcal O(1),\|.\|_{\Ps^1,h})$ are isomorphic, so we will only need to consider the affine lines $V_{\mathbf y}$.  Also the $\mathcal L$-primitive closure $X_{\mathbf y}$ of $V_{\mathbf y}$ is isomorphic to~$\Ps^1$. Under this identification the line bundle $\left.L\right|_{V_{\mathbf y}}$ pulls back to $\mathcal O(1)$. Hence $a(V_{\mathbf y})=a(\Ps^1)=2$ and $b(V_{\mathbf y})=b(\Ps^1)=1$ for all $\mathbf y$ and those values are well-known and easily deduced from Definitions~2.2.4 and~2.3.11 in \cite{BaTsch}. Also, $\gamma(V_{\mathbf y})$ and $\delta(V_{\mathbf y})$ only depend on a chosen desingularisation and we get
\[
\gamma(V_{\mathbf y})=\gamma(V_0)=\gamma(\Ps^1)=1 \quad \text{ and } \quad \delta(V_{\mathbf y})=\delta(V_0)=\delta(\Ps^1)=1.
\]

The last constant $\tau(V_{\mathbf y})$ does depend on $\mathbf y$, since it is defined in terms of the Tamagawa measures $\omega_{V_{\mathbf y},\nu}$ on $V_{\mathbf y}(K_\nu)$ which agree with the ones introduced in \cite[p. 112]{PeyreConstant}. We will compute the volumes with respect to the induced measures using the isomorphism $\rho_{\mathbf y} \colon \Ps^1 \to X_{\mathbf y}$. Here we pick $\rho_{\mathbf y}$ to be the morphism $(\tau_0 \colon \tau_1) \mapsto (y_0\tau_1 \colon y_1\tau_1 \colon y_1^2 \tau_0 \colon y_0^2 \tau_0)$ for a fixed representation $(y_0,y_1)$ for the point $\mathbf y \in \Ps^1(K)$. Note that $\rho_{\mathbf y}$ restricts to an isomorphism $\mathbb A^1 \to V_{\mathbf y}$ where $\mathbb A^1 \subseteq \Ps^1$ is given by $\tau_0 \ne 0$. Hence we can compute the relevant integral locally on this open with variable $\tau=\tau_1/\tau_0$, using the invertible section $\tau_0$. We also write $y_{\textup m, \nu}$ for one of the $y_i$ for which $|y_i|_\nu$ is maximal.
\begin{align*}
\tau_\nu(V_{\mathbf y}) & = \int_{V_{\mathbf y}(K_\nu)} \omega_{V_{\mathbf y},\nu}  = \int_{\mathbb A^1(K_\nu)} \rho_{\mathbf y}^*\omega_{V_{\mathbf y},\nu}\\
& = \int_{\mathbb A^1(K_\nu)} \sup\left\{\left|y_0\tau\right|_\nu, \left|y_1\tau\right|_\nu, \left|y^2_1\right|_\nu, \left|y^2_0\right|_\nu \right\}^{-2[K_\nu \colon \Q_p]}\ d\tau\\
& = |y_{\textup{m},\nu}|_\nu^{-4[K_\nu \colon \Q_p]} \int_{\mathbb A^1(K_\nu)} \sup\left\{\left|\frac{\tau}{y_{\textup{m},\nu}}\right|_\nu, 1\right\}^{-2[K_\nu \colon \Q_p]}\ d\tau\\
& = |y_{\textup{m},\nu}|_\nu^{-3[K_\nu \colon \Q_p]} \int_{\mathbb A^1(K_\nu)} \sup\left\{\left|\tau\right|_\nu, 1\right\}^{-2[K_\nu \colon \Q_p]}\ d\tau\\
& = |y_{\textup{m},\nu}|_\nu^{-3[K_\nu \colon \Q_p]} \tau_\nu(\Ps^1_K).
\end{align*}
Hence after taking the product over all places we end up with
\[
\tau(V_{\mathbf y}) = \frac1{H_{\Ps^1}(\mathbf y)^3} \tau(\Ps^1_K).
\]
The relation between the expected constants of each $V_{\mathbf y}$ and the projective line follows directly from \eqref{eq:expectedconstant}.
\end{proof}

\begin{Remark}
	Let $\Delta_K,\omega_K, h_K, R_K, r$ and $s$ denote the discriminant, the number of roots of unity in $K$, the size of the class group, the regulator and the number of real and complex embeddings of $K$, respectively. Schanuel \cite{Schanuel} proved that
	$$
	c_{\Ps^1, K} = 2^{r+s-1}\frac{2^{2r} (2\pi)^{2s} h_K R_K}{\left| \Delta_K\right| \omega_K \zeta_K(2)}
	$$
	and these constants agree with the predicted constants $c(\Ps^1_K)$.
\end{Remark}

We will now state the result of the count of points on bounded height on $V$. This result reflects the geometry of $V$ since the constant will be the sum over all $c(V_{\mathbf y})$. To simplify notation we use the height zeta function of a variety $A$ over a field $K$, which is
\begin{equation}\label{eq:heightzetafunction}
Z_K(A,s) = \sum_{\mathbf x \in A(K)} H(\mathbf x)^{-s}.
\end{equation}
By abuse of notation we sometimes use $Z_K(B,s)$ for a subset $B \subseteq A(K)$ for the sum restricted to those points.
\begin{Theorem}\label{thm:countonV}
	We have for $d> 0$
	$$
	N_{V,K}(B) = c_{\Ps^1,K}\left(Z_K(\Ps^1,3)+1\right) B^2 + O\left(\hat{c}_{V,K} B^{2-1/d}\right).
	$$
	Here
	$$
	\hat{c}_{V,K} = \frac{Z_K\left(\Ps^1,3-2/d\right)}{\zeta_K\left(2-1/d\right)}
	$$
	and the implied constant only depends on the degree $d$ of $K$.
	In the case $d=2$ the error term needs to be replaced by $O(\hat{c}_{V,K} B^{3/2} \log B)$ and $\hat{c}_{V,K} = \zeta_K\left(3/2\right)^{-1}c_{\Ps^1, K}$. Furthermore, this result agrees with the conjecture by Batyrev and Tschinkel.
\end{Theorem}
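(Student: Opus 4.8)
The plan is to run a universal‑torsor count in the style of Frei \cite{Frei}: the desingularisation of Theorem~\ref{thm:geometryW} dictates the torsor, Proposition~\ref{prop:VinBTterms} already pins down the main term, and the genuine work lies in the error term and its uniformity in $K$.

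\emph{The torsor and the main term.} Over $\mathcal O_K$, with a fixed system of integral ideal‑class representatives as in \cite{Frei}, a $K$‑point of $V\subseteq\Ps^3$ is, up to units and the usual class‑group factors, a coprime integral quadruple $(t_0,t_1,t_2,t_3)$ with $t_0^2t_2=t_1^2t_3$ and $(t_0,t_1)\neq 0$. Writing $t_0=ea$, $t_1=eb$ with $(a,b)=\mathcal O_K$ and $e$ a generator of the ideal $(t_0,t_1)$, the relation $a^2t_2=b^2t_3$ forces $b^2\mid t_2$ and $a^2\mid t_3$, leaving the torsor parametrisation
\[
(t_0,t_1,t_2,t_3)=(ea,\,eb,\,b^2c,\,a^2c),\qquad (a,b)=(e,c)=\mathcal O_K ,
\]
with a fourth variable $c$; here $e,a,b,c$ are the integral coordinates adapted to the fibration $\varphi$ and to the compactification $X$ of Theorem~\ref{thm:geometryW}. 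Setting $m_\nu:=\max\{|a|_\nu,|b|_\nu\}$, one has $\max_i|t_i|_\nu=m_\nu\max\{|e|_\nu,\,m_\nu|c|_\nu\}$ at every place, so the height of the corresponding point of $\Ps^3$ equals $H_K(\mathbf y)\cdot\prod_\nu\max\{|e|_\nu,m_\nu|c|_\nu\}^{[K_\nu \colon \Q_p]}$, where $\mathbf y=(a\colon b)\in\Ps^1(K)$ is its image under $\varphi$. Therefore
\[
N_{V,K}(B)=N_{V_0,K}(B)+\sum_{\mathbf y\in\Ps^1(K)}\#\bigl\{(e,c):(e,c)=\mathcal O_K,\ c\neq 0,\ \text{height}\leq B\bigr\},
\]
because the locus $c=0$ produces exactly the points $(a\colon b\colon 0\colon 0)$ of the base line $V_0$, which is isometric to $\Ps^1$; hence $N_{V_0,K}(B)=N_{\Ps^1,K}(B)=c_{\Ps^1,K}B^2+O(B^{2-1/d})$ by Schanuel \cite{Schanuel} — this is the ``$+1$''. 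For fixed $\mathbf y$ the inner count is a twisted Schanuel count on $\Ps^1$ whose leading term, obtained by rescaling $e$ and $c$ by powers of a generator of the relevant ideal, is $c(V_{\mathbf y})B^2$ with $c(V_{\mathbf y})=c_{\Ps^1,K}H_K(\mathbf y)^{-3}$, necessarily equal to the Tamagawa constant computed in Proposition~\ref{prop:VinBTterms}. Summing over $\mathbf y$, and noting that $\#\{H_K(\mathbf y)\leq T\}\sim c_{\Ps^1,K}T^2$ makes the tail $\sum_{H_K(\mathbf y)>B}H_K(\mathbf y)^{-3}=O(B^{-1})$ harmless, produces $c_{\Ps^1,K}Z_K(\Ps^1,3)B^2+O(B)$. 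Adding both pieces gives the asserted main term $c_{\Ps^1,K}\bigl(Z_K(\Ps^1,3)+1\bigr)B^2$.

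\emph{The error term.} The crux is that summing a naive per‑fibre error over $\mathbf y$ is too lossy: the $\Ps^1$‑count on a single fibre has error $\asymp(B/H_K(\mathbf y))^{2-1/d}$, and $\sum_{H_K(\mathbf y)\leq B}(B/H_K(\mathbf y))^{2-1/d}\asymp B^2$. One must instead carry out the count globally over the four torsor variables, so that Möbius inversion of \emph{both} coprimality conditions $(a,b)=\mathcal O_K$ and $(e,c)=\mathcal O_K$ provides the cancellation needed in the $\mathbf y$‑summation; together with the compression of the $(e,c)$‑region by the factors $m_\nu$, the analysis then yields an effective per‑fibre error of size $O\!\bigl(B^{2-1/d}H_K(\mathbf y)^{-(3-2/d)}\bigr)$, up to a factor $\zeta_K(2-1/d)^{-1}$ from the $(e,c)$‑Möbius sum. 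The total error is then $\zeta_K(2-1/d)^{-1}B^{2-1/d}\sum_{H_K(\mathbf y)\leq B}H_K(\mathbf y)^{-(3-2/d)}$: for $d\geq 3$ the exponent $3-2/d$ exceeds $2$, the sum converges to $Z_K(\Ps^1,3-2/d)$, and one obtains $O(\hat c_{V,K}B^{2-1/d})$; for $d=2$ the exponent equals $2$, the sum is $\asymp c_{\Ps^1,K}\log B$, and one gets $O(\hat c_{V,K}B^{3/2}\log B)$ with $\hat c_{V,K}=\zeta_K(3/2)^{-1}c_{\Ps^1,K}$; the remaining case $d=1$ is handled by the same bookkeeping, and the Schanuel error from the $V_0$‑part is in every case of equal or smaller order. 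The hardest part of the argument is keeping all of the lattice‑point estimates uniform in $K$ — so that the entire $K$‑dependence is recorded in $\hat c_{V,K}$ and the implied constant depends only on $d=[K\colon\Q]$ — which, as in \cite{Frei}, requires controlling the discriminant, regulator and class‑number contributions at each step.

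\emph{Agreement with the Batyrev--Tschinkel conjecture.} By the proposition identifying the $\mathcal L$‑targets, the minimal closed subschemes $V_\alpha\subseteq V$ with $\theta_\alpha>0$ are precisely the base line $V_0$ and the fibres $V_{\mathbf y}$, $\mathbf y\in\Ps^1(K)$; their union is all of $V$, so no proper closed subvariety contains them all, and by Proposition~\ref{prop:VinBTterms} each $\mathcal L$‑primitive closure is a copy of $\Ps^1$ on which $M$ restricts to $\mathcal O(1)$. Thus both hypotheses of Conjecture~\ref{conj:BT} hold, with $a=2$ and $b=1$, and the conjecture predicts $N_{V,K}(B)\sim\bigl(c(V_0)+\sum_{\mathbf y\in\Ps^1(K)}c(V_{\mathbf y})\bigr)B^2$. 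Substituting $c(V_0)=c(\Ps^1_K)=c_{\Ps^1,K}$ and $c(V_{\mathbf y})=c_{\Ps^1,K}H_K(\mathbf y)^{-3}$ from Proposition~\ref{prop:VinBTterms} reproduces exactly the main term $c_{\Ps^1,K}\bigl(Z_K(\Ps^1,3)+1\bigr)B^2$ found above, which establishes the last assertion. (This does not confirm the finer expectation of \cite{BaTsch} that each $V_\alpha$ sit inside a single fibre of $X\to\Ps^1$ — indeed $V_0$ meets every fibre — but that expectation is not part of Conjecture~\ref{conj:BT}.)
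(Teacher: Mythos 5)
Your overall strategy is the same as the paper's: the torsor parametrisation $(t_0,t_1,t_2,t_3)=(ea,eb,b^2c,a^2c)$ with $(a,b)=(e,c)=\mathcal O_K$ (in the paper's notation $y_0,y_1,y_3,y_2$) is exactly the one the paper derives via the ideal parametrisation of Lemma~\ref{lem:ParaOfIntId} and display \eqref{eq:parametrisation}, your treatment of the base line, the diagnosis that a na\"ive per-fibre Schanuel error is too lossy, and the final comparison with Conjecture~\ref{conj:BT} are all correct, and your targeted per-fibre error $O\bigl(B^{2-1/d}H_K(\mathbf y)^{-(3-2/d)}\bigr)$ is the right one.

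The gap is the mechanism you offer for that per-fibre bound, and it is the whole technical heart of the theorem. You assert that ``M\"obius inversion of \emph{both} coprimality conditions $(a,b)=\mathcal O_K$ and $(e,c)=\mathcal O_K$ provides the cancellation needed in the $\mathbf y$-summation.'' This is not what happens, and no cancellation occurs in the $\mathbf y$-sum --- the per-fibre errors are positive and are simply added. The paper M\"obius-inverts \emph{only} the $(e,c)=(y_2,y_3)$ condition (the $(y_0,y_1)$-condition is kept intact and later converted into a sum over $\Ps^1(K)$ via the $\omega_K$-to-1 map of Lemma~\ref{lem:OmegaKto1}), and the crucial gain of a factor $\kappa^{-(1-1/d)}$ over the na\"ive estimate comes from the two-variable lattice-point count in the $(y_2,y_3)$-region that is skewed by the place-by-place weights $\kappa_\nu$ versus $\kappa_\nu^2$. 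This is exactly Lemma~\ref{lem:FreiLattice} (an adaptation of Frei's Lemma~5.2): its error $\ll_d B^{2-1/d}\max\{\kappa\mathfrak{Na}_3,\kappa^2\mathfrak{Na}_2\}^{1/d}/(\kappa^3\mathfrak{Na}_2\mathfrak{Na}_3)$ is what produces the exponent $3-2/d$, and it is also what makes the implied constant depend only on $d$. You gesture at ``compression of the $(e,c)$-region by $m_\nu$'' and at the difficulty of ``controlling the discriminant, regulator and class-number,'' but you never state or prove the lattice-point estimate that actually delivers both the exponent and the uniformity in $K$. Without that ingredient --- which is the one nontrivial analytic input of the whole proof --- the error bound is simply asserted, and the argument does not close. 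A secondary, smaller inaccuracy: you invoke Schanuel to claim $N_{V_0,K}(B)=c_{\Ps^1,K}B^2+O(B^{2-1/d})$ and say the case $d=1$ is ``handled by the same bookkeeping''; for $d=1$ Schanuel's error is $O(B\log B)$ and $Z_K(\Ps^1,3-2/d)=Z_{\Q}(\Ps^1,1)$ diverges, so the stated error shape cannot hold as written (the paper, too, is silent here, effectively treating only $d\ge 2$).
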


Note that this implies that the set consisting of $V_0$ and the $V_{\mathbf y}$ form an asymptotic arithmetic fibration in the terminology of \cite{BaTsch}, although not each element is contained in a fibre of the natural fibration $\varphi \colon X \to Y$.

\section{Rational points on the cubic surface}

We will now provide a proof for the count of the rational points on the open subset $V$ of the cubic surface given by $t_0^2t_2 = t_1^2t_3$ over all number fields $K$. 

In the following we construct a parametrisation of the rational points $(t_0 \colon t_1\colon t_2\colon t_3) \in V$ which does not work if $t_i = 0$ for some $0\leq i \leq 3$.

We denote by $U :=V \setminus\{t_0t_1t_2t_3 = 0\}$ the set of points with non-vanishing coordinates. Points with vanishing coordinates lie on one of the lines $\{t_0 = t_3 =0 \}$, $\{t_1 = t_2 =0 \}$, $\{t_2 = t_3 =0 \}$ that each contribute with $N_{\Ps^1, K}(B) = c_{\Ps^1, K} B^2 + O(B \log B)$ to the count, according to \cite{Schanuel}. We will spend the rest of this chapter establishing the count on $U(K)$. The asymptotic for the number of points of bounded height on $U$ was already done in \cite[Section~4.5]{BaTsch}. Our approach will yield explicit error terms, which we will need later on.

\subsection{Parametrisation of rational points}\label{sec:Para}
Working over number fields prevents us from having unique factorisation in the ring of integers. Therefore, we turn our attention to ideals generated by the coordinates of the rational points. Let $\mathbf t = (t_0,t_1,t_2,t_3) \in U(K)$. We denote by $\mathfrak J(\mathbf t) = (t_0 \mathcal O_K, t_1 \mathcal O_K, t_2 \mathcal O_K, t_3 \mathcal O_K)$ the smallest ideal that contains all principle ideals generated by the coordinates of $\mathbf t$. We fix, once and for all, a system $\mathcal D_K$ of integral ideals which represent the $h_K$ different ideal classes of $\mathcal O_K$. We choose the representatives to be minimal with respect to the ideal norm. Let $C \in \mathcal D_K$ be the representative of the ideal class such that $[\mathfrak J(\mathbf t)] = [C]$. By multiplying with a suitable element from $K^\times$, we can choose a representative of $\mathbf t \in (\mathcal O_K\setminus \{0\})^4$ with $\mathfrak J(\mathbf t) = C$.
\begin{Lemma}\label{lem:ParaOfIntId}
	For all $\mathbf t \in U(K)$ with $\mathfrak J(\mathbf t) = C$ there exist integral non-zero ideals $\mathfrak a_0, \mathfrak a_1, \mathfrak a_2, \mathfrak a_3 \subseteq \mathcal O_K$ such that the coprimality conditions $(\mathfrak a_0, \mathfrak a_1) = (\mathfrak a_2, \mathfrak a_3) = \mathcal O_K$ hold and we have the parametrisation of integral ideals 
	\begin{align*}
	t_0 \mathcal O_K &= C\mathfrak a_0 \mathfrak a_3, &&t_2\mathcal O_K =C \mathfrak a_1^2 \mathfrak a_2,\\
	t_1 \mathcal O_K &= C \mathfrak a_1 \mathfrak a_3, && t_3\mathcal O_K = C \mathfrak a_0^2 \mathfrak a_2.
	\end{align*} 
\end{Lemma}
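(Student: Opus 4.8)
The plan is to reduce everything to a purely multiplicative statement about ideals and then check it one prime at a time. First I would strip off the class‑group representative: since $\mathfrak J(\mathbf t)=C$ we have $t_i\mathcal O_K\subseteq C$ for each $i$, so $\mathfrak b_i:=t_i\mathcal O_K\,C^{-1}$ is an integral ideal, and the identity $t_0\mathcal O_K+\dots+t_3\mathcal O_K=C$ becomes the coprimality relation $\mathfrak b_0+\mathfrak b_1+\mathfrak b_2+\mathfrak b_3=\mathcal O_K$. Moreover the equation $t_0^2t_2=t_1^2t_3$ of $W$ holds for the chosen integral representative $\mathbf t\in(\mathcal O_K\setminus\{0\})^4$, since both sides scale by the cube of the scaling factor, and therefore $\mathfrak b_0^2\mathfrak b_2=\mathfrak b_1^2\mathfrak b_3$ as ideals. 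It then suffices to produce integral ideals $\mathfrak a_0,\mathfrak a_1,\mathfrak a_2,\mathfrak a_3$ with $\mathfrak a_0+\mathfrak a_1=\mathfrak a_2+\mathfrak a_3=\mathcal O_K$ and $\mathfrak b_0=\mathfrak a_0\mathfrak a_3$, $\mathfrak b_1=\mathfrak a_1\mathfrak a_3$, $\mathfrak b_2=\mathfrak a_1^2\mathfrak a_2$, $\mathfrak b_3=\mathfrak a_0^2\mathfrak a_2$, because multiplying these four identities by $C$ gives exactly the asserted parametrisation.

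The candidates are essentially forced. I would set $\mathfrak a_3:=\mathfrak b_0+\mathfrak b_1$, $\mathfrak a_2:=\mathfrak b_2+\mathfrak b_3$, and then $\mathfrak a_0:=\mathfrak b_0\,\mathfrak a_3^{-1}$, $\mathfrak a_1:=\mathfrak b_1\,\mathfrak a_3^{-1}$; all four are integral ideals, they are non‑zero because every $t_i\neq0$, and $\mathfrak b_0=\mathfrak a_0\mathfrak a_3$, $\mathfrak b_1=\mathfrak a_1\mathfrak a_3$ hold by construction. The two coprimality statements are then immediate: $\mathfrak a_0+\mathfrak a_1=(\mathfrak b_0+\mathfrak b_1)\mathfrak a_3^{-1}=\mathcal O_K$, and $\mathfrak a_2+\mathfrak a_3=\mathfrak b_0+\mathfrak b_1+\mathfrak b_2+\mathfrak b_3=\mathcal O_K$. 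So the whole content of the lemma is the remaining pair of identities $\mathfrak b_2=\mathfrak a_1^2\mathfrak a_2$ and $\mathfrak b_3=\mathfrak a_0^2\mathfrak a_2$.

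These I would verify locally. Fixing a prime $\mathfrak p$ and writing $a,b,c,d$ for the $\mathfrak p$‑adic valuations of $\mathfrak b_0,\mathfrak b_1,\mathfrak b_2,\mathfrak b_3$, we have non‑negative integers with $\min(a,b,c,d)=0$ and the relation $2a+c=2b+d$ coming from $W$. Since $v_\mathfrak p(\mathfrak a_3)=\min(a,b)$ and $v_\mathfrak p(\mathfrak a_2)=\min(c,d)$, the two identities amount to
\[
c=2\bigl(b-\min(a,b)\bigr)+\min(c,d),\qquad d=2\bigl(a-\min(a,b)\bigr)+\min(c,d).
\]
The relation $2a+c=2b+d$ rewrites as $c-d=2(b-a)$, so $c\le d$ precisely when $a\ge b$; hence in the case $a\ge b$ one has $\min(a,b)=b$, $\min(c,d)=c$, and the two equations read $c=c$ and $d=2(a-b)+c$ — the second being the relation itself — while the case $a\le b$ is symmetric. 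This is the one point where the equation of $W$ is used essentially; the rest is formal. Finally, only finitely many primes divide any $\mathfrak b_i$, so the $\mathfrak a_i$ are honest non‑zero integral ideals of $\mathcal O_K$, which completes the argument. The "hard part" is really just the bookkeeping of this case split and making sure all exponents stay non‑negative so that the $\mathfrak a_i$ remain integral; there is no genuine obstacle.
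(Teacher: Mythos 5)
Your construction of $\mathfrak a_0,\dots,\mathfrak a_3$ coincides precisely with the paper's (the paper defines $\mathfrak a_3$ by $(t_0\mathcal O_K,t_1\mathcal O_K)=C\mathfrak a_3$, which is exactly your $\mathfrak b_0+\mathfrak b_1$, and $\mathfrak a_0,\mathfrak a_1$ by dividing out $C\mathfrak a_3$); the only difference is how the last two factorisation identities are verified. The paper introduces auxiliary ideals $\mathfrak b_2', \mathfrak b_3'$ with $t_i\mathcal O_K=C\mathfrak b_i'\mathfrak a_2$ for $i=2,3$, reads off the relation $\mathfrak a_0^2\mathfrak b_2'=\mathfrak a_1^2\mathfrak b_3'$, and concludes $\mathfrak a_0^2=\mathfrak b_3'$, $\mathfrak a_1^2=\mathfrak b_2'$ from the two coprimalities $(\mathfrak a_0,\mathfrak a_1)=\mathcal O_K$ and $(\mathfrak b_2',\mathfrak b_3')=\mathcal O_K$ by ordinary divisibility; you instead verify the same identities prime by prime via the valuation case split. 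Both are correct; the paper's global coprimality argument is a touch shorter, while your local computation makes the role of the defining equation $2a+c=2b+d$ at each prime completely explicit. Either way the argument is sound, and the scaling observation you make at the start (that $t_0^2t_2=t_1^2t_3$ is preserved under rescaling by the cube of the factor) is a correct justification the paper leaves implicit.
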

\begin{proof}
	Let $\mathfrak a_2, \mathfrak a_3$ be nonzero ideals of $\mathcal O_K$ such that $(t_0\mathcal O_K,t_1\mathcal O_K) = C \mathfrak a_3$ and $(t_2\mathcal O_K,t_3\mathcal O_K) = C \mathfrak a_2$. It follows that $(\mathfrak a_2, \mathfrak a_3) = \mathcal O_K$. Further, there are nonzero ideals $\mathfrak a_0, \mathfrak a_1$ of $\mathcal O_K$ with $t_0 \mathcal O_K = C \mathfrak a_0 \mathfrak a_3$ and $t_1 \mathcal O_K =C \mathfrak a_1 \mathfrak a_3$. In the same sense, let $\mathfrak b_2, \mathfrak b_3$ be such that $t_2\mathcal O_K = C \mathfrak b_2 \mathfrak a_2$ and $t_3\mathcal O_K = C\mathfrak b_3 \mathfrak a_2$. We use the describing equation and obtain
	$$
	\mathfrak a_0^2 \mathfrak b_2 = \mathfrak a_1^2 \mathfrak b_3.
	$$
	By construction $(\mathfrak a_0,\mathfrak a_1) = \mathcal O_K$, which yields $\mathfrak a_0^2 = \mathfrak b_3$ and $\mathfrak a_1^2 = \mathfrak b_2$.
\end{proof}
By fixing the ideal class of one of the integral ideals from Lemma \ref{lem:ParaOfIntId}, say $\mathfrak a_0$, we obtain a parametrisation of rational points in the following way. There is a unique $C_0\in \mathcal D_K$, such that $[\mathfrak a_0] = [C_0^{-1}]$. The product $\mathfrak a_0 C_0$ is again integral, so there is a $y_0 \in \mathcal O_K\setminus \{0\}$ with $y_0\mathcal O_K = \mathfrak a_0 C_0$ and we obtain
$$
t_0\mathcal O_K = C y_0 C_0^{-1} \mathfrak a_3,
$$
which indicates $[\mathfrak a_3] = [C_0 C^{-1}] =: [C_3^{-1}]$. The choice of $y_0$ and $t_0$ determines a unique $y_3 \in \mathcal O_K\backslash \{0\}$ with $y_3\mathcal O_K = \mathfrak a_3 C_3$, such that 
$$
t_0 = y_0 y_3.
$$
In the same way we also find $[\mathfrak a_1] = [C_0^{-1}] =: [C_1^{-1}]$ and $[\mathfrak a_2] = [C^{-1} C_0^2]=: [C_2^{-1}]$. The elements $y_i \in \mathcal O_K\backslash \{0\}$ with $y_i \mathcal O_K = \mathfrak a_i C_i$ for $i=1,2$ are uniquely determined by $t_1/y_3$ and $t_2/y_1^2$ respectively. We obtain the parametrisation of rational points
\begin{equation}\label{eq:parametrisation}
\begin{split}
t_0  = y_0 y_3,\hspace{4cm} t_2 =y_1^2 y_2,\\
t_1  = y_1 y_3,\hspace{4cm}  t_3 = y_0^2 y_2.
\end{split}
\end{equation}
\begin{Definition}\label{DefMCC}
	For $(C,C_0) \in \mathcal D_K^2$, let $M(C,C_0)$ be the set of $\mathbf y \in (\mathcal O_K\backslash \{0\})^4$ such that 
	\begin{enumerate}\label{DefinitionOfMC}
		\item $y_k \in C_k$ for $k=0,\dots, 3$ with $C_1 = C_0$, $C_2=C C_0^{-2}$ and $C_3= C C_0^{-1}$, and
		\item the ideals $y_k C_k^{-1}$ satisfy $(y_0 C_0^{-1},y_1 C_1^{-1}) = (y_2 C_2^{-1} ,y_3 C_3^{-1}) = \mathcal O_K$.
	\end{enumerate}
\end{Definition}
We have found an surjective map 
$$
\phi \colon \bigcup_{(C,C_0) \in \mathcal D_K^2} M(C, C_0) \to U(K)
$$
that is far from being injective. The choice of $y_1$ in the construction above is unique up to multiplication by elements in $\mathcal O_K^\times$. We want to find a subset $A \subseteq M(C,C_0)$ such that the preimage of $\mathbf t$ under $\phi$ is finite in $A$. Therefore, we take $\mathbf y \in M(C,C_0)$ and compare $\phi(\mathbf y)$ to $\phi(\mathbf \zeta \mathbf y)$ where $\mathbf \zeta \in \left(\mathcal O_K^\times\right)^4$ to trace the impact of units. Let $\mathcal O_K^\times = \mu_K \oplus \mathcal F$ where $\mu_K$ are the roots of unity in $K$ and $\mathcal F$ is the free abelian group of rank $r+s-1$ according to Dirichlet's unit theorem. We obtain the following result
\begin{Lemma}\label{lem:NiceMap}
	Let $R_1 \subseteq (K^\times)^2$ be a system of representatives for the orbits of the action of $\mathcal F$ on $(K^\times)^2$ given by scalar multiplication and let $\mathcal R := R_1 \times R_1 \subseteq K^4$. The map
	$$
	\phi \colon \mathcal R \cap \bigcup_{(C,C_0) \in \mathcal D_K^2} M(C,C_0) \to U(K)
	$$
	given by \eqref{eq:parametrisation} is $\omega_K^2$-to-1.
\end{Lemma}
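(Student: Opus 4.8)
The plan is as follows. Surjectivity of $\phi\colon\bigcup_{(C,C_0)}M(C,C_0)\to U(K)$ is already established, so it remains to control the cardinality of the fibres after restricting to $\mathcal R=R_1\times R_1$. I would proceed in two steps: first describe the full fibre of $\phi$ over a point $\mathbf t\in U(K)$, then count how many of its elements lie in $\mathcal R$.

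\emph{Step 1: the fibre of $\phi$.} By the construction preceding Definition~\ref{DefMCC}, the pair $(C,C_0)$ and the integral ideals $\mathfrak a_0,\dots,\mathfrak a_3$ of Lemma~\ref{lem:ParaOfIntId} are determined by $\mathbf t$, so the whole fibre lies in a single $M(C,C_0)$; fix any $\mathbf y^\ast=(y_0^\ast,y_1^\ast,y_2^\ast,y_3^\ast)$ in it. Comparing coordinates in \eqref{eq:parametrisation}, one finds that $\phi(\mathbf y)=\phi(\mathbf y')$ as points of $\Ps^3(K)$ exactly when $\mathbf y'=(\lambda y_0,\lambda y_1,\lambda^{-2}\kappa\,y_2,\lambda^{-1}\kappa\,y_3)$ for some $\lambda,\kappa\in K^\times$; here the fourth coordinate is automatically consistent because $t_0^2t_2=t_1^2t_3$ holds on $W$. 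This exhibits a free action of the torus $(K^\times)^2$ whose orbits are the fibres of $\phi$. Imposing now that $\mathbf y$ and $\mathbf y'$ both satisfy Definition~\ref{DefMCC}, the integrality and coprimality conditions force $\gcd(y_0\mathcal O_K,y_1\mathcal O_K)=C_0$ and $\gcd\bigl((y_2)C_2^{-1},(y_3)C_3^{-1}\bigr)=\mathcal O_K$, and likewise with primes; replacing $\mathbf y$ by $\mathbf y'$ multiplies these ideals by $(\lambda)$ and $(\kappa)$ respectively, so $(\lambda)=(\kappa)=\mathcal O_K$, i.e. $\lambda,\kappa\in\mathcal O_K^\times$. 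Hence $\phi^{-1}(\mathbf t)\cap\bigcup_{(C,C_0)}M(C,C_0)$ is a single free orbit of $\mathcal O_K^\times\times\mathcal O_K^\times$, acting by $(\lambda,\kappa)\cdot\mathbf y^\ast=(\lambda y_0^\ast,\lambda y_1^\ast,\lambda^{-2}\kappa y_2^\ast,\lambda^{-1}\kappa y_3^\ast)$.

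\emph{Step 2: restriction to $\mathcal R$.} Write $\mathcal O_K^\times=\mu_K\oplus\mathcal F$ as in the statement. Since $\mathcal F$ acts with trivial stabiliser on any nonzero pair in $(K^\times)^2$, the $\mathcal O_K^\times$-orbit of such a pair decomposes into exactly $[\mathcal O_K^\times:\mathcal F]=\omega_K$ orbits of $\mathcal F$, and $R_1$ contains precisely one representative of each; so the orbit meets $R_1$ in exactly $\omega_K$ points, each realised by a unique unit. Applying this to the pair $(y_0,y_1)=\lambda(y_0^\ast,y_1^\ast)$ singles out exactly $\omega_K$ admissible values of $\lambda$, and for each of them the pair $(y_2,y_3)=\kappa\,(\lambda^{-2}y_2^\ast,\lambda^{-1}y_3^\ast)$ runs over an $\mathcal O_K^\times$-orbit as $\kappa$ varies, contributing another factor $\omega_K$. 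Therefore every fibre contains exactly $\omega_K\cdot\omega_K=\omega_K^2$ points of $\mathcal R\cap\bigcup_{(C,C_0)}M(C,C_0)$; in particular the fibres are nonempty, so $\phi$ remains surjective after restriction, and it is $\omega_K^2$-to-$1$.

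I expect the main obstacle to be Step~1: one must pin down precisely that the gauge group of the parametrisation \eqref{eq:parametrisation} is exactly $\mathcal O_K^\times\times\mathcal O_K^\times$ acting in the skew fashion dictated by the exponents $(1,1)$ and $(2,1)$, checking both that the fourth coordinate imposes no further relation — this is exactly where the equation of $W$ enters — and that the coprimality conditions of Definition~\ref{DefMCC} cut the continuous ambiguity over $K^\times$ down to units, and no further. Once this is in place, the count over $\mathcal R$ is a routine application of Dirichlet's unit theorem.
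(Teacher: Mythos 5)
Your proposal is correct, and it fills in precisely what the paper only sketches: the text preceding the lemma says to ``compare $\phi(\mathbf y)$ to $\phi(\zeta\mathbf y)$ for $\zeta\in(\mathcal O_K^\times)^4$ to trace the impact of units'' but gives no written proof. Your two-step argument — first identifying the full gauge group of the parametrisation \eqref{eq:parametrisation} over $K^\times$, then using the conditions of Definition~\ref{DefMCC} to cut it down to $(\mathcal O_K^\times)^2$, and finally intersecting each free $(\mathcal O_K^\times)^2$-orbit with $\mathcal R=R_1\times R_1$ to get $\omega_K^2$ points — is exactly the right reasoning and is essentially the proof the authors had in mind.

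Two small points worth tightening. First, in Step~1 you should be slightly more explicit that $\mathbf y$ and $\mathbf y'$ \emph{a priori} lie in possibly different $M(C,C_0)$ and $M(C',C_0')$: one sees $C=C'$ because $\mathfrak J(\phi(\mathbf y))$ has ideal class independent of the chosen $\mathbf y$, and then $C_0'=(\lambda)C_0$ forces $C_0'=C_0$ (and hence $\lambda\in\mathcal O_K^\times$) because $\mathcal D_K$ contains exactly one representative per class. You state that the whole fibre lies in a single $M(C,C_0)$, which is true but is really the conclusion, not the hypothesis, of this part of the argument. Second, your parenthetical ``here the fourth coordinate is automatically consistent because $t_0^2t_2=t_1^2t_3$ holds on $W$'' is not quite the right justification: the fourth identity $(y_0')^2y_2'=\mu\,y_0^2y_2$ already follows by substituting $y_0'=\lambda y_0$, $y_2'=\mu\lambda^{-2}y_2$, without invoking the equation of $W$ (indeed, the parametrisation is built so that its image lies on $W$, so no independent constraint arises). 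Neither remark is a genuine gap — your argument is sound.
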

We are left with establishing the height condition on the parametrisation of the rational points. Let $\mathbf y \in M(C,C_0)$ then
\begin{equation}\label{HeightProd}
H_K(\phi(\mathbf y)) = \mathfrak{N} \mathfrak{J}(\phi(\mathbf y))^{-1} \prod_{v \in M_K^{\infty} }\max_{j=0,1}\left\{\left|y_jy_3\right|_\nu, \left|y_j^2y_2\right|_\nu \right\},
\end{equation}
where $\mathfrak J(\phi(\mathbf y)) = C$ by construction. We define

\begin{equation}\label{eq:DefRB}
\mathcal R (B) := \left\{\mathbf y \in \mathcal R\colon \prod_{\nu \in M_K^\infty}\max_{j=0,1}\left\{\left|y_jy_3\right|_\nu, \left|y_j^2y_2\right|_\nu \right\} \leq B \right\}.
\end{equation}
Together with Lemma \ref{lem:NiceMap} we obtain the following expression for the counting function of $U$.
\begin{Proposition}\label{prop:SectionParaProp}
	Let $M(C,C_0)$ be as in Definition \ref{DefinitionOfMC} and let $\mathcal R(B)$ be as in \eqref{eq:DefRB}. Then $M(C,C_0) \cap \mathcal R(B)$ is finite for all $B>0$ and $(C,C_0) \in \mathcal D_K^2$. Furthermore, we have
	$$
	N_{U,K}(B) = \frac1{\omega^2_K} \sum_{ (C,C_0) \in \mathcal D_K^2} \#\big[M(C,C_0) \cap \mathcal R(\mathfrak N(C)B)\big].
	$$
\end{Proposition}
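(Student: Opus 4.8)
The plan is to combine Lemma~\ref{lem:NiceMap} with the explicit height formula \eqref{HeightProd} and the normalisation $\mathfrak J(\mathbf t)=C$. Fix a point $\mathbf t\in U(K)$. By Lemma~\ref{lem:NiceMap} the map $\phi$ restricted to $\mathcal R\cap\bigcup_{(C,C_0)}M(C,C_0)$ is $\omega_K^2$-to-$1$, so every $\mathbf t$ has exactly $\omega_K^2$ preimages in this union. First I would observe that the ideal class $[C]$ is determined by $\mathbf t$ (it is the class of $\mathfrak J(\mathbf t)$), so across the disjoint union over $(C,C_0)\in\mathcal D_K^2$ the $\omega_K^2$ preimages of $\mathbf t$ are distributed among the pieces indexed by the fixed correct $C$ and the $h_K$ possible choices of $C_0$ — but the count only needs the total, which is $\omega_K^2$ regardless.

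Next I would handle the height condition. For $\mathbf y\in M(C,C_0)$ we have $\mathfrak J(\phi(\mathbf y))=C$, so \eqref{HeightProd} gives
\[
H_K(\phi(\mathbf y)) = \mathfrak N(C)^{-1}\prod_{\nu\in M_K^\infty}\max_{j=0,1}\bigl\{\,|y_jy_3|_\nu,\ |y_j^2y_2|_\nu\,\bigr\}.
\]
Hence $H_K(\phi(\mathbf y))\le B$ is equivalent to $\prod_{\nu\in M_K^\infty}\max_{j}\{|y_jy_3|_\nu,|y_j^2y_2|_\nu\}\le \mathfrak N(C)B$, i.e. to $\mathbf y\in\mathcal R(\mathfrak N(C)B)$. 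Therefore the number of $\mathbf t\in U(K)$ with $H_K(\mathbf t)\le B$ equals $\tfrac1{\omega_K^2}$ times the number of preimages $\mathbf y$ in $\mathcal R$ with $\phi(\mathbf y)$ of bounded height, which by the above is $\tfrac1{\omega_K^2}\sum_{(C,C_0)\in\mathcal D_K^2}\#\bigl[M(C,C_0)\cap\mathcal R(\mathfrak N(C)B)\bigr]$. This is the claimed formula.

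Finiteness of $M(C,C_0)\cap\mathcal R(\mathfrak N(C)B)$ follows because the displayed product over archimedean places, together with the integrality $y_k\in C_k$ (which bounds the non-archimedean contributions from below away from $0$), forces each $|y_k|_\nu$ to be bounded at every place; a standard argument (finiteness of elements of bounded height in a fixed ideal class, cf. \cite{Schanuel}) then shows only finitely many $\mathbf y$ survive. Alternatively one may simply note $N_{U,K}(B)$ is finite and $\phi$ is finite-to-one on $\mathcal R$, so each term in the finite sum is finite.

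The only genuine subtlety — and the step I would write out most carefully — is the bookkeeping of ideal classes: one must check that as $\mathbf y$ ranges over $\mathcal R\cap\bigcup_{(C,C_0)}M(C,C_0)$, the fibres of $\phi$ are \emph{entirely} contained in the union (no preimage is lost), which is exactly the surjectivity established before Definition~\ref{DefMCC} refined by the unit-orbit reduction of Lemma~\ref{lem:NiceMap}. Granting that, the proposition is a direct rewriting of Lemma~\ref{lem:NiceMap} in terms of the height-truncated sets $\mathcal R(\mathfrak N(C)B)$, and there is no analytic content at this stage.
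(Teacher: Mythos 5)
Your proof is correct and proceeds by exactly the route the paper intends: combine Lemma~\ref{lem:NiceMap} (the map $\phi$ restricted to $\mathcal R\cap\bigcup M(C,C_0)$ is $\omega_K^2$-to-1), the observation that $\mathfrak J(\phi(\mathbf y))=C$ for $\mathbf y\in M(C,C_0)$ (which follows from the coprimality conditions in Definition~\ref{DefMCC} via Lemma~\ref{lem:ParaOfIntId}), and the explicit height formula~\eqref{HeightProd} to convert the bound $H_K(\phi(\mathbf y))\le B$ into $\mathbf y\in\mathcal R(\mathfrak N(C)B)$. The paper itself gives no separate proof of this proposition, treating it as immediate bookkeeping from the preceding parametrisation, so your write-up fills that gap adequately, and the finiteness argument is fine.

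One minor inaccuracy you should correct in the prose: for a fixed $\mathbf t\in U(K)$, the pair $(C,C_0)$ is \emph{uniquely} determined. The class $[C]$ is $[\mathfrak J(\mathbf t)]$, and once the representative with $\mathfrak J(\mathbf t)=C$ is chosen, the ideal $\mathfrak a_0$ (hence $C_0$ with $[\mathfrak a_0]=[C_0^{-1}]$) is determined by $\mathbf t$, since $\mathfrak a_3$ is fixed by $(t_0\mathcal O_K,t_1\mathcal O_K)=C\mathfrak a_3$ and then $\mathfrak a_0$ by $t_0\mathcal O_K=C\mathfrak a_0\mathfrak a_3$; unit multiplication of the $y_i$ does not change these ideals. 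So the $\omega_K^2$ preimages of $\mathbf t$ all lie in the \emph{same} $M(C,C_0)$, rather than being distributed across different values of $C_0$ as you suggest. This does not affect the final identity, since you only use the total fibre size, but it is worth stating correctly because it explains why the pieces $M(C,C_0)\cap\mathcal R(\mathfrak N(C)B)$ for distinct $(C,C_0)$ have disjoint images and the sum has no double-counting issues.
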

\subsection{Möbius inversion and lattice point counting}
For now we fix two integral representatives of ideal classes $C, C_0 \in \mathcal D_K$ and rewrite the cardinality of the intersection of sets in Proposition \ref{prop:SectionParaProp} as sums to
\begin{equation}\label{eq:setassum}
\#\big[M(C,C_0) \cap \mathcal R(B)\big] = \sum_{\substack{(y_0,y_1) \in R_1 \\ y_j \in C_j \text{ for }j =0,1 \\ (y_0 C_0^{-1}, y_1C_1^{-1}) =\mathcal O_K}} \sum_{\substack{(y_2,y_3) \in R_1\\y_i \in \mathfrak aC_i \text{ for }i=2,3\\ (y_2 C_2^{-1}, y_3 C_3^{-1}) = \mathcal O_K \\ \prod_{v\in M_K^\infty} \max_{j} \{|y_jy_3|_\nu, |y_j^2y_2|_{\nu} \} \leq B}} 1.
\end{equation}
We will estimate the inner sum by a lattice point counting result by Frei and show that the first sum is equal to a height zeta function.

We will resolve one of the coprimality conditions by performing a Möbius inversion for integral ideals. For any real or complex embedding $\sigma_\nu$ of $K$ let $\kappa_\nu := \max_{j=0,1}\{|\sigma_\nu(y_j)|\}$ and $\kappa := \prod_{\nu \in M_K^\infty} \kappa_\nu$. We substitute this into the height condition of the inner sum in \eqref{eq:setassum} and obtain 
\begin{equation}\label{eq:NewHeight}
\prod_{\nu \in M_K^\infty}\max\left\{\kappa_\nu \left|y_3\right|_\nu, \kappa_\nu^2 \left|y_2\right|_\nu\right\}\leq B.
\end{equation}
Let $I_K$ be the set of integral ideals $\mathfrak a \subseteq \mathcal O_K$. We define the function $f \colon I_K \to \C$ by
$$
f(\mathfrak a) := \sum_{\substack{(y_2,y_3) \in R_1\\ y_i \in \mathfrak aC_i \text{ for }i=2,3\\(y_2C_2^{-1},y_3C_3^{-1})= \mathfrak a\\ \prod_{\nu \in M_K^\infty} \max \{\kappa_\nu|y_3|_\nu, \kappa_\nu^2|y_2|_\nu \} \leq B}} 1
$$
for $\mathfrak a \in I_K$. With this choice of $f$, the inner sum of \eqref{eq:setassum} equals $f(\mathcal O_K)$.
Note that $f$ vanishes on ideals of norm bigger than $B$ 
which means that we can define a function $g \colon I_K \to \C$ by taking the finite sum
$$
g(\mathfrak a) = \sum_{\mathfrak b \in I_K} f(\mathfrak a \mathfrak b) =  \sum_{\substack{(y_2,y_3) \in R_1\\ y_i \in \mathfrak aC_i \text{ for }i=2,3\\ \prod_{\nu \in M_K^\infty} \max \{\kappa_\nu|y_3|_\nu, \kappa_\nu^2|y_2|_\nu \} \leq B}} 1.
$$
We compute $f(\mathcal O_K)$ using the Möbius inversion formula for integral ideals and obtain
$$
f(\mathcal O_K) = \sum_{\mathfrak d \in I_K} \mu(\mathfrak d) g(\mathfrak d) = \sum_{\mathfrak d \in I_K} \mu(\mathfrak d)   \sum_{\substack{(y_2,y_3) \in R_1\\ y_i \in \mathfrak dC_i \text{ for }i=2,3\\ \prod_{\nu \in M_K^\infty} \max \{\kappa_\nu|y_3|_\nu, \kappa_\nu^2|y_2|_\nu \} \leq B}} 1.
$$
We estimate the inner sum by a variation of a lattice point counting result by Frei \cite[Lemma 5.2]{Frei}:
\begin{Lemma}\label{lem:FreiLattice}
	Given constants $\kappa_\nu >0$ let $\kappa := \prod_{\nu \in M_K^\infty} \kappa_\nu$. Let $\mathfrak a_2, \mathfrak a_3 \neq \{0\}$ be fractional ideals of $K$, and $R_1$ a system of representatives for the orbits of $(K^\times)^2$ under the action of $\mathcal F$ by scalar multiplication. Define
	$$
	M_1(B) :=  \#\left\{(y_2,y_3) \in(\mathfrak a_2 \times \mathfrak a_3) \cap R_1 \colon \eqref{eq:NewHeight} \text{ holds} \right\}.
	$$
	Then $M_1(B)$ is finite and 
	$$
	\left|M_1(B) - 2^{r+s-1}\frac{ 2^{2r} (2\pi)^{2s} R_K}{|\Delta_K| \kappa^3 \mathfrak{Na}_2 \mathfrak{Na}_3} B^2\right| \ll_{d} \frac{B^{2-1/d} \max\{\kappa \mathfrak{Na}_3, \kappa^2 \mathfrak{Na}_2 \}^{1/d}}{\kappa^3 \mathfrak{Na}_2\mathfrak{Na}_3},
	$$
	for all $B>0$. The implicit $O$-constant only depends on the degree of the number field.
\end{Lemma}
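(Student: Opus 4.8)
The plan is to reduce this to the standard lattice-point count of Schanuel–Frei. First I would fix the two embeddings $\mathfrak{a}_2,\mathfrak{a}_3$ and rescale: the height condition \eqref{eq:NewHeight} defines, after twisting by the constants $\kappa_\nu$, a region in $\R^{d}\times\R^{d}$ adapted to the archimedean places, and the set $(\mathfrak{a}_2\times\mathfrak{a}_3)\cap R_1$ is (a fundamental domain for the $\mathcal F$-action on) a lattice of covolume $c\cdot|\Delta_K|\cdot\mathfrak{N}\mathfrak{a}_2\,\mathfrak{N}\mathfrak{a}_3$ inside that product of Minkowski spaces. So $M_1(B)$ counts lattice points in a dilate of a fixed bounded region, and the leading term is $\operatorname{vol}/\operatorname{covol}$ while the error is controlled by the $(d{-}1)$-dimensional boundary measure of the region after the dilation.

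The key steps, in order: (1) Identify $K_\R\times K_\R$ with $\R^{2d}$ via the Minkowski embedding and note that $\mathfrak{a}_2\times\mathfrak{a}_3$ maps to a full lattice $\Lambda$ of covolume $2^{-s\cdot 2}|\Delta_K|\,\mathfrak{N}\mathfrak{a}_2\,\mathfrak{N}\mathfrak{a}_3$ (accounting for the pairs at complex places); (2) rescale each coordinate block by the $\kappa_\nu$ so that \eqref{eq:NewHeight} becomes the condition $\prod_\nu\max\{|z_3|_\nu,|z_2|_\nu^{?}\}\le B$ on the rescaled variables — here one has to keep track that $\kappa_\nu$ enters linearly in the $y_3$-slot and quadratically in the $y_2$-slot, which is exactly what produces the $\kappa^3$ in the denominator of both main and error terms; (3) pass to the quotient by $\mathcal F$ acting on norm-one elements, using Dirichlet's unit theorem to see that $R_1$ picks out a fundamental domain whose relevant "cone over the height-$\le 1$ region" has finite, explicitly computable volume, yielding the factor $2^{r+s-1}\frac{2^{2r}(2\pi)^{2s}R_K}{|\Delta_K|}$ after combining the covolume, the regulator, and the archimedean volume of $\{H\le 1\}$; (4) invoke Frei's counting lemma \cite[Lemma 5.2]{Frei} — or rerun its proof via Lipschitz's principle / Davenport's lemma — to get the lattice-point estimate with error $\ll_d (\text{boundary measure})\cdot B^{2-1/d}$, and finally (5) translate the boundary measure back through the rescaling to obtain $\max\{\kappa\,\mathfrak{N}\mathfrak{a}_3,\kappa^2\,\mathfrak{N}\mathfrak{a}_2\}^{1/d}/(\kappa^3\mathfrak{N}\mathfrak{a}_2\,\mathfrak{N}\mathfrak{a}_3)$.

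The main obstacle I expect is step (4)–(5): making the dependence of the error term on $\kappa$, $\mathfrak{a}_2$ and $\mathfrak{a}_3$ completely uniform, with an implied constant depending only on $d=[K:\Q]$. Frei's lemma as stated handles a single fixed region; the content here is that the family of regions obtained by varying the $\kappa_\nu$ is "uniformly Lipschitz" after normalising, so that the $O$-constant does not degrade. Concretely one must check that the number of intersections of any line with the boundary of the normalised height region is bounded independently of the parameters, and that the projections of this region onto coordinate hyperplanes have uniformly bounded measure; the asymmetric weighting ($\kappa_\nu$ versus $\kappa_\nu^2$) is what makes this bookkeeping delicate but it does not introduce any genuinely new phenomenon beyond \cite[Lemma 5.2]{Frei}. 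Everything else — the covolume computation, the unit-group quotient, the volume of $\{H_\nu\le 1\}$ — is routine and parallels Schanuel's original argument.
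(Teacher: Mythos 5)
Your proposal is correct and takes essentially the same route as the paper: the paper's proof simply notes that Frei's Lemma~5.2 goes through verbatim for two fractional ideals $\mathfrak a_2,\mathfrak a_3$, and that the implied constant — as you correctly flag as the delicate point — comes from Frei's Lemma~4.4 and depends only on a fixed basic set, not on the number field or the parameters $\kappa_\nu$. Your step-by-step reconstruction (Minkowski embedding, covolume, $\kappa$-rescaling with the $1{:}2$ weighting, unit-group quotient, Lipschitz/Davenport) is exactly what unwinding Frei's proof yields, so this is the same argument in expanded form.
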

\begin{proof}
	We apply the proof of \cite[Lemma 5.2]{Frei} verbatim to this case where two ideals $\mathfrak a_2, \mathfrak a_3$ are considered. Note that the implicit constant in Frei's Lemma comes from \cite[Lemma 4.4]{Frei} where the implicit constant depends on invariants of the basic set $S_F^{d\ast}(1)$ but not on invariants of the number field $K$.
\end{proof}
We note that $M_1(B)$ from Lemma \ref{lem:FreiLattice} equals the inner sum of $f(\mathcal O_K)$. We set $\mathfrak{a}_2 = \mathfrak d C_2= \mathfrak d C C_0^{-2}$ and $\mathfrak{a}_3 = \mathfrak d C_3 = \mathfrak d C C_0^{-1}$ and substitute everything back into \eqref{eq:setassum}. This yields the following expression
\begin{equation*}
\begin{aligned}
\#[M(C,C_0)\cap \mathcal R(B)] =& \sum_{\mathfrak d \in I_K} {\mu(\mathfrak d)} \sum_{\substack{(y_0,y_1) \in R_1 \\ y_j \in C_j \text{ for }j=0,1\\(y_0C_0^{-1}, y_1C_1^{-1})= \mathcal O_K\\ \kappa \leq \sqrt{B}}} \Bigg(2^{r+s-1}\frac{ 2^{2r} (2\pi)^{2s} R_K \mathfrak N (C_0)^3B^2}{{\mathfrak N(\mathfrak d)^2}|\Delta_K| \kappa^3 \mathfrak{N}(C)^2}\\ &+ O\left(\frac{B^{2-1/d}\mathfrak N(C_0)^{3-2/d}}{\kappa^{3-2/d}\mathfrak N(\mathfrak d)^{2-1/d} \mathfrak N(C)^{2-1/d}}\right)\Bigg).
\end{aligned}
\end{equation*}
We replace the sum over $\mathfrak d$ by the reciprocal of the Dedekind zeta function evaluated at 2 in the main term and for the error term we replace it by the Dedekind zeta function evaluated at $2-1/d$.
\begin{Lemma}\label{lem:OmegaKto1}
	The canonical map
	\begin{align*}
	\psi \colon \bigcup_{C \in \mathcal D(K)}\left\{(y_0,y_1) \in R_1 \colon (y_0 \mathcal O_K, y_1 \mathcal O_K) = C \right\} &\to \Ps^{1}(K)\\
	(y_0,y_1) &\mapsto (y_0 \colon y_1)
	\end{align*}
	is $\omega_K$-to-1 and surjective.
\end{Lemma}
We use Lemma \ref{lem:OmegaKto1} together with the observation that $H_K(y_0,y_1) = \kappa \mathfrak N(C_0)^{-1}$ and obtain for the sum in Proposition \ref{prop:SectionParaProp}
\begin{equation}\label{eq:EndOfSectionSum}
\sum_{ (C,C_0) \in \mathcal D_K^2} \sum_{ \substack{(y_0,y_1) \in \Ps^1(K)\\ (y_0\mathcal O_K,y_1\mathcal O_K) = C_0\\ y_0y_1 \neq 0\\ H_K(y_0,y_1) \leq \sqrt{B}}}\left( \frac{c_{\Ps^1,K}}{H_K(y_0,y_1)^3} B^2 + O\left(\frac{1}{\zeta_K(2-1/d)H_K(y_0,y_1)^{3-2/d}}B^{2-1/d}\right)\right).
\end{equation}
There is no dependence on $C$ so we replace the sum over $C \in \mathcal D_K$ by multiplying the main term with $h_K$. Combing the two sums together, we obtain 
$$
Z_K\left(\Ps^1, 3, \sqrt{B} \right) -2 
$$
in the main term, where the additional argument in the height zeta function indicates that we only sum up to points of height $\sqrt B$. We deduct 2 from the height zeta function, because we exclude the contribution coming from points that have a vanishing coordinate, which are $(1,0)$ and $(0,1)$. For the error terms we obtain $Z_K(\Ps^1, 3-2/d, \sqrt{B})$.

In the main term, we extend the sum to infinity, so that we obtain the whole height zeta function. The tail gives a contribution of $O(B^{3/2})$ which can be seen by using dyadic summation. We treat the sum in the error term in the same way and receive $O(B^{2-1/d})$. In the special case $d=2$ the height zeta function $Z_K(\Ps^1,2)$ is not convergent, so we apply Abel's summation formula and obtain
$$
Z_K\left(\Ps^1,2,\sqrt{B}\right) \ll c_{\Ps^1,K} \log B.
$$
We combine \eqref{eq:EndOfSectionSum} with Proposition \ref{prop:SectionParaProp} and account for the three removed lines with an additional $3N_{\Ps^1,K}(B)$, as indicated earlier, in order to finally establish the third statement of Theorem \ref{thm:introductionV}.

\section{Count of rational points of the symmetric product}

Let $X$ be a quasi-projective $K$-variety and $m\geq1$, we define the $m$th symmetric product of $X$ as 
$$
\Sym^m X = X^m/ \operatorname{S}_m
$$
where $\operatorname{S}_m$ is the symmetric group on $m$ elements. The $K$-points of $\Sym^m X$ are representatives for Galois invariant $m$-tuple of $\bar K$-points on $X$ under permutation. There is a canonical projection map $\pi \colon X^m \to \Sym^m X$. Every point $\tilde{\mathbf{x}} \in \Sym^m X$ is the imagine of the projection of $m$ points $\mathbf x_1,\dots,\mathbf x_m \in X(\bar \Q)$ under $\pi$.

Narrowing down our attention once again to the smooth locus $V$ of the cubic surface $W\subseteq \Ps^3$ defined by $t_0^2t_2 = t_1^2t_3$, we observe that $\Sym^2 V$ has infinitely many rational points. By imposing a height condition on the rational points of $\Sym^2 V$ we can use Northcott's theorem to obtain a finite counting problem.

Let $H_K(\mathbf x)$ be the relative height for $\mathbf x \in V(K)$. Let $\mathbf x_1, \mathbf x_2 \in V(\bar \Q)$. We will see in Proposition~\ref{prop:adelicmetriconsym2} that the height function on $V$ induces a natural height function on $\Sym^2 V$. The height of a point $\pi(\mathbf x_1, \mathbf x_2) = \tilde{\mathbf x} \in \Sym^2 V$ is then
$$
H(\tilde{\mathbf x}) = \big(H_{K}(\mathbf x_1)H_{K}(\mathbf x_2)\big)^{1/d},
$$
where $K$ is the smallest extension of $\Q$ that contains $\mathbf x_1, \mathbf x_2$ and $d = [K \colon \Q]$.
We want to find an asymptotic formula for the counting function
$$
N_{\Sym^2 V,\Q}(B) :=\# \left\{\mathbf x \in \Sym^2 V(\Q) \colon H(\mathbf x) \leq B \right\},
$$
as $B$ tends towards infinity.

\begin{Theorem}\label{thm:sym2}
Consider the symmetric square $\Sym^2 V$ for the surface $V$ defined over the rationals and the fibration $\varphi \colon V \to \Ps^1$ into lines from Definition~\ref{dfn:fibrationmap}. Define
\[
\mathcal Z= \{ \pi(\mathbf x_1, \mathbf x_2) \in \Sym^2 V(\Q) \colon \mathbf x_1, \mathbf x_2 \in V(\bar\Q), \varphi(\mathbf x_1)=\varphi(\mathbf x_2)\}.
\]
\begin{enumerate}
\item[(a)] As $B \to \infty$ we have
\[
N_{\mathcal Z}(B) = cB^3 + O\left(B^2 \log B\right)
\]
where the constant $c$ is given by
$$
c_{\Sym^2 \Ps^1, \Q}\left( Z_{\Q}\left(\Ps^1,9\right)+1\right),
$$
with $c_{\Sym^2 \Ps^1, \Q} = 4 \zeta(3)^{-1}$ according to \cite[Theorem 3]{Schmidt}.
\item[(b)] As $B \to \infty$ we have
\[
N_{\Sym^2V(\Q)\setminus\mathcal Z}(B) \ll B^2\log B.
\]
\item[(c)] We conclude
\[
N_{\Sym^2 V,\Q}(B) = cB^3 + O\left( B^2 \log B\right).
\]
\end{enumerate}
\end{Theorem}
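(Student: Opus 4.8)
Since $N_{\Sym^2 V,\Q}(B)=N_{\mathcal Z}(B)+N_{\Sym^2 V(\Q)\setminus\mathcal Z}(B)$, part (c) is immediate from (a) and (b), so the work is in the first two statements. For (a) I would begin with a geometric analysis of $\mathcal Z(\Q)$: if $\{\mathbf x_1,\mathbf x_2\}$ is a Galois-stable pair with $\varphi(\mathbf x_1)=\varphi(\mathbf x_2)$, then $\varphi(\mathbf x_1)$ is Galois-fixed, hence lies in $\Ps^1(\Q)$; therefore $\mathcal Z(\Q)=\bigsqcup_{\mathbf y\in\Ps^1(\Q)}\Sym^2 V_{\mathbf y}(\Q)$, to which one adjoins the contribution of the base line, regarded as the degenerate fibre ``at $0$'': its symmetric square $\Sym^2 V_0\cong\Sym^2\Ps^1$ carries the usual height and will supply the summand $+1$ in the constant. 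Each $\Sym^2 V_{\mathbf y}(\Q)$ breaks into diagonal points, pairs of two rational points, and genuinely quadratic (conjugate) pairs. Using Theorem~\ref{thm:countonV} and partial summation one gets $\#\{(\mathbf x_1,\mathbf x_2)\in V(\Q)^2\colon H_\Q(\mathbf x_1)H_\Q(\mathbf x_2)\le B\}\ll B^2\log B$, which dominates the total of all diagonal and split pairs over all $\mathbf y$; so the $B^3$ term comes entirely from conjugate pairs.

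\textbf{The main count on a fibre.} Fix $\mathbf y=(y_0:y_1)$ with coprime integral representatives and put $Y:=H(\mathbf y)=\max(|y_0|,|y_1|)$. A conjugate pair on $V_{\mathbf y}$ is $\{P_u,\overline{P_u}\}$ with $P_u=(y_0:y_1:uy_1^2:uy_0^2)$ for $u$ quadratic over $\Q$; if $aX^2+bX+c$ is its primitive integral minimal polynomial ($a>0$, $b^2-4ac$ not a square) with archimedean roots $u_1,u_2$, then a content-ideal computation — based on the identity $\mathfrak N\big((a,au)\big)=a$ for primitive forms — shows
\[
H\big(\{P_u,\overline{P_u}\}\big)=H_{\Q(u)}(P_u)=a\,Y^2\prod_{i=1,2}\max\{1,|u_i|Y\}.
\]
One then counts the admissible triples $(a,b,c)$ by the geometry-of-numbers method of Schmidt \cite{Schmidt} and Le Rudulier \cite{RudulierThesis}: Möbius inversion removes primitivity, a sieve removes the square-discriminant locus, and what remains is the volume of the archimedean region $\{a\,Y^2\prod_i\max\{1,|u_i|Y\}\le B\}$ in $(a,b,c)$-space times fixed local densities. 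The substitution $w_i=Yu_i$ identifies this volume with $Y^{-9}$ times the one for $Y=1$, while the arithmetic densities are unchanged (coprimality of $y_0,y_1$ forces no extra $p$-adic condition), giving, uniformly in $\mathbf y$,
\[
N^{\mathrm{conj}}_{\Sym^2 V_{\mathbf y},\Q}(B)=\frac{c_{\Sym^2\Ps^1,\Q}}{H(\mathbf y)^{9}}\,B^3+O\!\big(\text{error summable over }\mathbf y\big).
\]
Summing the main terms over $\mathbf y\in\Ps^1(\Q)$ produces $c_{\Sym^2\Ps^1,\Q}\,Z_\Q(\Ps^1,9)\,B^3$ (the series converges as $9>2$), the fibre errors sum to $O(B^2\log B)$, and the base line adds $c_{\Sym^2\Ps^1,\Q}B^3$; this is (a).

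\textbf{The complement.} Pairs in $\Sym^2 V(\Q)\setminus\mathcal Z$ satisfy $\varphi(\mathbf x_1)\ne\varphi(\mathbf x_2)$. When both $\mathbf x_i\in V(\Q)$ the count is $\le\#\{(\mathbf x_1,\mathbf x_2)\in V(\Q)^2\colon H_\Q(\mathbf x_1)H_\Q(\mathbf x_2)\le B\}\ll B^2\log B$ as above. When $\{\mathbf x_1,\mathbf x_2\}=\{\mathbf x_1,\overline{\mathbf x_1}\}$ is conjugate, $\mathbf y:=\varphi(\mathbf x_1)$ is a genuine quadratic point of $\Ps^1$, $K=\Q(\mathbf y)=\Q(\mathbf x_1)$, and $\mathbf x_1\in V_{\mathbf y}(K)$; so the count is $\sum_{\mathbf y}\#\{\mathbf x_1\in V_{\mathbf y}(K)\colon H_K(\mathbf x_1)\le B\}$ over quadratic points $\mathbf y$ of $\Ps^1$. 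Theorem~\ref{thm:countonV} applied over $K$ (with its error depending only on the degree) bounds the inner count by $\ll c_{\Ps^1,K}H_K(\mathbf y)^{-3}B^2$, relevant only when $H_K(\mathbf y)\le B$; partial summation over quadratic points of $\Ps^1$ of bounded relative height together with the Schmidt–Le Rudulier asymptotics over $K$ turns the outer sum into $\ll B^2\log B\cdot\sum_K c_{\Ps^1,K}c_K$, which converges by $c_{\Ps^1,K}\ll_\varepsilon|\Delta_K|^{-1/2+\varepsilon}$ and $c_K\ll_\varepsilon|\Delta_K|^{-3/2+\varepsilon}$. Hence (b), and then (c).

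\textbf{Main obstacle.} The delicate point is the uniformity in the second paragraph: one must rerun Schmidt's and Le Rudulier's lattice-point arguments (with the variant of Frei's counting lemma \cite{Frei}) so that the error term has explicit, $\mathbf y$-summable dependence on the twist parameter; and in the third paragraph one needs analogous uniform control of the constants $c_{\Ps^1,K}$ and $c_K$ in terms of $|\Delta_K|$. The explicit, degree-only-dependent error terms recorded in Theorem~\ref{thm:countonV} are exactly what makes both of these feasible.
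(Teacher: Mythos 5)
Your proposal takes essentially the same route as the paper's: both reduce to counting conjugate quadratic pairs (after dispatching rational pairs by Theorem~\ref{thm:countonV} and partial summation), both split those pairs according to whether the $\varphi$-image lies in $\Ps^1(\Q)$, both extract the main term fibre-by-fibre via the Schmidt--Le Rudulier lattice count on each $\Sym^2 V_{\mathbf y}$ with the $H(\mathbf y)^{-9}$ rescaling, and both bound the remaining genuinely-quadratic $\varphi$-image locus by summing over quadratic fields $K$ with L-function estimates. Your organization (decomposing $\mathcal Z$ by fibre first, then by type of point) is a cosmetic rearrangement of the paper's Lemma~\ref{lem:sym2} (which splits by type first, then isolates the rational-$\varphi$ locus $U_1$).

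One point worth flagging, which you inherit directly from the paper: $\Sym^2 V_0$ is \emph{not} actually contained in $\mathcal Z$ as defined. Since $\varphi$ restricts to an isomorphism $V_0 \to \Ps^1$, a conjugate quadratic pair $\{\mathbf x,\bar{\mathbf x}\}$ on $V_0$ has $\varphi(\mathbf x)\ne\varphi(\bar{\mathbf x})$, so $\pi(\mathbf x,\bar{\mathbf x})\notin\mathcal Z$; only the diagonal of $V_0$ lands in $\mathcal Z$. Your phrase ``to which one adjoins the contribution of the base line, regarded as the degenerate fibre at $0$'' is therefore a hand-wave — $V_0$ is a section, not a fibre — and matches the paper's own informal ``we account for the three missing lines,'' in which only two of the three ($V_{(0:1)}$ and $V_{(1:0)}$) genuinely belong to $\mathcal Z'$. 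Strictly speaking, the decomposition should be $\mathcal Z(\Q)=\bigsqcup_{\mathbf y\in\Ps^1(\Q)}\Sym^2 V_{\mathbf y}(\Q)$ giving $c_{\Sym^2\Ps^1,\Q}Z_\Q(\Ps^1,9)B^3$, with the summand $+1$ coming from $\Sym^2 V_0$ as a \emph{separate} saturated subvariety ($V_{0,0}$ in Proposition~\ref{prop:sym2VinBTterms}), not as part of $\mathcal Z$; part (b) as literally stated would otherwise fail, since $\Sym^2 V_0(\Q)\setminus\mathcal Z$ alone already has order $B^3$. The correct final asymptotic (c) for $N_{\Sym^2 V,\Q}(B)$ is unaffected.
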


We will see in the next section that $\mathcal Z$ is actually the set of rational points on a closed subvariety $\mathbb V \subseteq \Sym^2 V$ and hence a thin set of type I.

\begin{Remark}\label{rmk:rat}
	The set of points $\pi(\mathbf x, \mathbf y) \in \Sym^2 V$ with $\mathbf x, \mathbf y \in V(\Q)$ are spread over parts (a) and (b) of Theorem~\ref{thm:sym2}. We point out that the count of these points is
	$$
	\# \left\{\mathbf x,\mathbf y \in V(\Q)\colon H_{\Q}(\mathbf x)H_{\Q}(\mathbf y) \leq B\right\} \sim 2c^2_{V,\Q} B^2 \log B \left(1 + o(1)\right),
	$$
	with $c_{V,\Q}$ the constant from Theorem~\ref{thm:introductionV}.

	One can use this to show that $N_{\Sym^2V(\Q)\setminus\mathcal Z}(B)$ in Theorem~\ref{thm:sym2}(b) also has a lower bound of order $B^2 \log B$. This is the order of growth predicted by Manin's conjecture for the variety $\Sym^2 V$.

	We can also give an explicit lower bound $cB^2 \log B$ for the constant $c$ predicted by Batyrev and Tschinkel for $\Sym^ V\backslash \mathbb V$. However our current approach yields a strictly larger upper bound. So our techniques are currently not strong enough to establish the required asymptotic behaviour.
\end{Remark}

\subsection{The Batyrev--Tschinkel conjecture for $\Sym^2 V$}

We will interpretate the count of $\Q$-points on bounded height on $\Sym^2 V$ in terms of the Batyrev--Tschinkel conjecture. First we will recall how an adelically metrized line bundle on a scheme $X$ gives rise to one on $\Sym^m X$.

\begin{Proposition}[{\cite[Proposition~1.33]{RudulierThesis}}]\label{prop:adelicmetriconsym2}
Let $K$ be a number field, $m\geq 1$ an integer and $(\mathcal L,\|.\|_\nu)$ an adelically metrized line bundle on a quasi-projective $K$-scheme $X$. Let $\pi_i \colon X^m \to X$ the projection on the $i$th coordinate. The line bundle
\[
\mathcal L^{\boxtimes m} := \bigotimes \pi^*_i \mathcal L
\]
admits an adelic metric constructed by taking the tensor product of the pull back metric on each factor. This adelically metrized line bundle is symmetric and descends along the quotient map $\pi \colon X^m \to \Sym^m X$ to a line bundle $\mathcal L^{(m)}$ with the unique adelic metric $\{\|.\|'\}_\nu$ which satisfies
\[
\|s(\pi(\alpha_1,\ldots, \alpha_m))\|'_\nu = \|(\pi^*s)(\alpha_1,\ldots, \alpha_m)\|_\nu
\]
for any section $s \in \Gamma(\Sym^m X,\mathcal L^{(m)})$ non-zero at $\pi(\alpha_1,\ldots, \alpha_m)$.
\end{Proposition}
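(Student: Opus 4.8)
The plan is to produce the line bundle $\mathcal L^{(m)}$ on $\Sym^m X$ by $\operatorname{S}_m$-descent, then to descend the tensor-product metric place by place, and finally to verify the integrality condition at almost all places, following \cite[Proposition~1.33]{RudulierThesis}. For the algebraic part I would equip $\mathcal L^{\boxtimes m}=\bigotimes_i\pi_i^*\mathcal L$ with its tautological $\operatorname{S}_m$-linearization permuting the tensor factors and set $\mathcal L^{(m)}:=\bigl(\pi_*\mathcal L^{\boxtimes m}\bigr)^{\operatorname{S}_m}$. By the Kempf descent lemma — a linearised line bundle descends along the quotient by a finite group exactly when every stabiliser acts trivially on the fibre at the point in question — this coherent sheaf is invertible and the natural arrow $\pi^*\mathcal L^{(m)}\to\mathcal L^{\boxtimes m}$ is an isomorphism of $\operatorname{S}_m$-bundles, provided that for every point $\mathbf x=(\alpha_1,\dots,\alpha_m)$ the stabiliser $(\operatorname{S}_m)_{\mathbf x}$ acts trivially on $(\mathcal L^{\boxtimes m})_{\mathbf x}=\bigotimes_i\mathcal L_{\alpha_i}$. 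Here $(\operatorname{S}_m)_{\mathbf x}$ is the product of the symmetric groups on the blocks of indices with coinciding $\alpha_i$, and transposing two copies of one and the same line, $\ell\otimes\ell\to\ell\otimes\ell$, $v\otimes w\mapsto w\otimes v$, is the identity; so the hypothesis holds and $\mathcal L^{(m)}$ exists with the stated compatibility.

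For each place $\nu$ I would then define the metric on $\mathcal L^{(m)}$ by the identity in the statement and check that it is well defined. On $\mathcal L^{\boxtimes m}$ over $X^m(\overline{K_\nu})$ take the tensor product of the pulled-back $\nu$-adic metrics, so that $\|(s_1\boxtimes\cdots\boxtimes s_m)(\alpha_1,\dots,\alpha_m)\|_\nu=\prod_i\|s_i(\alpha_i)\|_\nu$ on decomposable local sections; this metric is continuous, $\Gal(\overline{K_\nu}/K_\nu)$-equivariant, and invariant under simultaneously permuting the $\alpha_i$ and the $s_i$ — the last statement being precisely what it means for the adelically metrized bundle $\mathcal L^{\boxtimes m}$ to be \emph{symmetric}. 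Given $\tilde{\mathbf x}\in(\Sym^m X)(\overline{K_\nu})$ and a local section $s$ of $\mathcal L^{(m)}$ with $s(\tilde{\mathbf x})\neq 0$, choose a lift $\mathbf x\in X^m(\overline{K_\nu})$ with $\pi(\mathbf x)=\tilde{\mathbf x}$ — this exists because $\pi$ is finite surjective and $\overline{K_\nu}$ is algebraically closed — and put $\|s(\tilde{\mathbf x})\|'_\nu:=\|(\pi^*s)(\mathbf x)\|_\nu$, which is nonzero since the pullback of a non-vanishing section is non-vanishing. Any two such lifts form a single $\operatorname{S}_m$-orbit, so by the invariance just noted the value is independent of the choice.

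Continuity of $\tilde{\mathbf x}\mapsto\|s(\tilde{\mathbf x})\|'_\nu$ descends because $\pi$, being finite and hence proper, induces a closed surjection on $\overline{K_\nu}$-points, so a topological quotient map, and the pullback of this function along $\pi$ is the continuous $\mathbf x\mapsto\|(\pi^*s)(\mathbf x)\|_\nu$; $\Gal(\overline{K_\nu}/K_\nu)$-equivariance descends because the $\operatorname{S}_m$- and Galois actions commute. Thus each $\|.\|'_\nu$ is a bona fide $\nu$-adic metric on $\mathcal L^{(m)}$, and it is the unique one with the stated property: $\pi$ is surjective on $\overline{K_\nu}$-points and $\mathcal L^{(m)}$ admits a non-vanishing local section near every point, so the identity of the statement determines $\|.\|'_\nu$ everywhere.

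For the adelic (integrality) condition I would start from a model $(\mathcal X,\widetilde{\mathcal L})$ of $(X,\mathcal L)$ over $\mathcal O_{K,S}$ inducing the given metrics at the places outside $S$, form $\mathcal X^m$, pass to the quotient $\mathcal X^m/\operatorname{S}_m$ — again a scheme, because $\mathcal X$ is quasi-projective over the affine noetherian base $\mathcal O_{K,S}$ — and descend $\widetilde{\mathcal L}^{\boxtimes m}$ to it by the same stabiliser computation, possibly after enlarging $S$; compatibility of the invariants-of-pushforward and tensor-product constructions with base change then shows that this model induces exactly the metrics $\|.\|'_\nu$ at the places outside the enlarged $S$. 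The step I expect to require the most care is the descent of the metric together with its continuity: one has to make sure that $\pi$ really is a topological quotient on the relevant analytic spaces and that the non-reduced fibres of $\pi$ over the diagonal loci of $\Sym^m X$ cause no trouble, which is precisely what the triviality of the stabiliser action on the fibres is designed to rule out.
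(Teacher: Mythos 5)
The paper does not prove this proposition at all: it is imported verbatim from Le Rudulier's thesis (\cite[Proposition~1.33]{RudulierThesis}) and used as a black box, so there is no in-paper argument to compare yours against. Judged on its own, your reconstruction is correct and follows the standard route. The algebraic half is right: the only obstruction to descending an $\operatorname{S}_m$-linearised line bundle along the finite quotient $\pi\colon X^m\to\Sym^m X$ is a nontrivial stabiliser action on fibres, and you correctly observe that the stabiliser of $(\alpha_1,\dots,\alpha_m)$ is a product of symmetric groups on blocks of equal coordinates whose transpositions act on $\ell\otimes\ell$ (with $\ell$ one-dimensional) as the identity, since $v\otimes w=w\otimes v$ in the tensor square of a line; in characteristic zero this is exactly Kempf's criterion, so $\mathcal L^{(m)}=(\pi_*\mathcal L^{\boxtimes m})^{\operatorname{S}_m}$ is invertible with $\pi^*\mathcal L^{(m)}\cong\mathcal L^{\boxtimes m}$. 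The metric half is also sound: you correctly insist on working with $\overline{K_\nu}$-points (necessary here, since a rational point of $\Sym^2 V$ lifts to a Galois pair of quadratic points, which is exactly how the height $H(\tilde{\mathbf x})=(H_K(\mathbf x_1)H_K(\mathbf x_2))^{1/d}$ is used later in the paper), and well-definedness, continuity, and uniqueness all follow from $\operatorname{S}_m$-invariance plus surjectivity of $\pi$ on $\overline{K_\nu}$-points as you say. The only part that remains genuinely a sketch is the verification that the family $\{\|.\|'_\nu\}$ is adelic, i.e.\ induced by a single model outside a finite set of places; your spreading-out argument (take a model over $\mathcal O_{K,S}$, form $\mathcal X^m/\operatorname{S}_m$, descend $\widetilde{\mathcal L}^{\boxtimes m}$, enlarge $S$) is the right one, but as written it asserts rather than proves the compatibility of the invariant-pushforward construction with reduction at each place; spelling that out would complete the proof.
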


So we have an adelic metric on the line bundle $\mathcal L^{(2)}$ on $\Sym^2 V$ and we will count points which are bounded with respect to the associated height. Similar to the count on $V$ coming from the presence of infinitely many lines, the count on $\Sym^2 V$ is explained by the presence of infinitely many planes. Let us first explain where these planes come from.

\begin{Definition}
Define $V_{0,0}$ as the proper closed subscheme $\Sym^2 V_0 \to \Sym^2 V$. Similarly for any point $\mathbf y \in \Ps^1(\Q)$ we have $V_{\mathbf y, \mathbf y} := \Sym^2 V_{\mathbf y}$.
\end{Definition}

Since $V_0$ and $V_{\mathbf y}$ are respectively a projective and an affine line we see that $V_{0,0}$ and $V_{\mathbf y, \mathbf y}$ are respectively a projective and an affine plane.

\begin{Proposition}\label{prop:sym2VinBTterms}
Theorem~\ref{thm:sym2} agrees with the expected result by Batyrev and Tschinkel. This shows that the point count on $\Sym^2 V$ is explained by its strongly saturated subvarieties $V_{0,0}$ and $V_{\mathbf y, \mathbf y}$. 
\end{Proposition}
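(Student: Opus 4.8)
The plan is to mirror the analysis of $V$ carried out in Proposition~\ref{prop:VinBTterms}, one dimension up, after making the reduction forced by the first clause of Conjecture~\ref{conj:BT}. First I would determine the $\mathcal L^{(2)}$-targets of $\Sym^2 V$. Only a subvariety whose point count has order $B^3$ can have $\theta>0$; by Theorem~\ref{thm:sym2}(b), together with the product estimate in Remark~\ref{rmk:rat}, a surface $\pi(C\times C')$ coming from two distinct lines $C\neq C'$ of $V$ contributes only $O(B^2\log B)$, and $\Sym^2 V$ itself contributes only $O(B^2\log B)$. Hence the $\mathcal L^{(2)}$-targets are exactly the planes $V_{\mathbf y,\mathbf y}=\Sym^2 V_{\mathbf y}$ for $\mathbf y\in\Ps^1(\Q)$ together with $V_{0,0}=\Sym^2 V_0$, arising from the one line $V_0$ and the one-parameter family $V_{\mathbf y}$ found for $V$. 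All of these lie in the proper closed subvariety consisting of $V_{0,0}$ together with $\mathbb V$ (the closure of $\bigcup_{\mathbf y}V_{\mathbf y,\mathbf y}$, whose $\Q$-points form $\mathcal Z$), so clause~(1) of Conjecture~\ref{conj:BT} fails for $\Sym^2 V$ itself. One therefore applies the conjecture separately to $\mathbb V$ (whose only targets are the $V_{\mathbf y,\mathbf y}$, which form a dense one-parameter family, so clause~(1) holds) and to $V_{0,0}$ (whose only target is itself).

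Next I would identify the $\mathcal L^{(2)}$-primitive closures. Since $(V_0,\mathcal L|_{V_0})$ is isometric to $(\Ps^1,\mathcal O(1))$, the restriction of $\mathcal L^{(2)}$ to $V_{0,0}=\Sym^2 V_0$ is the descent $(\mathcal O_{\Ps^1}(1))^{(2)}$, which under the classical isomorphism of $\Sym^2\Ps^1$ with the $\Ps^2$ of binary quadratic forms is $\mathcal O_{\Ps^2}(1)$; and since $V_{\mathbf y}$ has $\mathcal L$-primitive closure $X_{\mathbf y}\cong\Ps^1$ with $M$ restricting to $\mathcal O(1)$, the plane $V_{\mathbf y,\mathbf y}$ has $\mathcal L^{(2)}$-primitive closure $X_{\mathbf y,\mathbf y}:=\Sym^2 X_{\mathbf y}\cong\Ps^2$, again with the descended bundle restricting to $\mathcal O_{\Ps^2}(1)$ --- that $\Sym^2 X_{\mathbf y}$ really is this primitive closure follows from the compactness argument in the lemma identifying $X$ with $\Proj A(V,\mathcal L)$, applied to $\Sym^2 X_{\mathbf y}$. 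With these identifications, clause~(2) of Conjecture~\ref{conj:BT} reduces to checking that $\mathcal O_{\Ps^2}(1)$ is close to the anticanonical bundle in the sense of \cite[Definition~2.3.4]{BaTsch}, which goes exactly as for $\mathcal O_{\Ps^1}(1)$ on $\Ps^1$ in the proof of Proposition~\ref{prop:VinBTterms}, and one reads off $a(V_{\mathbf y,\mathbf y})=a(V_{0,0})=3$, $b(V_{\mathbf y,\mathbf y})=b(V_{0,0})=1$ and $\gamma=\delta=1$, consistent with Schmidt's asymptotic $N_{\Sym^2\Ps^1,\Q}(B)\sim c_{\Sym^2\Ps^1,\Q}B^3$.

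The only target-dependent quantity is then the Tamagawa volume, which I would compute by pulling $\omega_{V_{\mathbf y,\mathbf y},\nu}$ back along $\Sym^2\rho_{\mathbf y}\colon\Sym^2\Ps^1\to X_{\mathbf y,\mathbf y}$ and evaluating the local integral as in Proposition~\ref{prop:VinBTterms}. Tracking the twist by the local maximum $|y_{\textup{m},\nu}|_\nu$ through the descent of the metric to $\mathcal L^{(2)}$ (Proposition~\ref{prop:adelicmetriconsym2}) and through the two-dimensional measure gives $\tau_\nu(V_{\mathbf y,\mathbf y})=|y_{\textup{m},\nu}|_\nu^{-9[K_\nu:\Q_p]}\tau_\nu(\Sym^2\Ps^1)$ at each place --- the exponent $9$ being three times the exponent $3$ that occurred for $V_{\mathbf y}$, the effect of passing to a two-dimensional target with $a=3$ --- so that $\tau(V_{\mathbf y,\mathbf y})=H_{\Ps^1}(\mathbf y)^{-9}\tau(\Sym^2\Ps^1)$, hence $c(V_{\mathbf y,\mathbf y})=c_{\Sym^2\Ps^1,\Q}H_{\Ps^1}(\mathbf y)^{-9}$, while $c(V_{0,0})=c_{\Sym^2\Ps^1,\Q}$. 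Summing over the targets,
\[
c(V_{0,0})+\sum_{\mathbf y\in\Ps^1(\Q)}c(V_{\mathbf y,\mathbf y})
=c_{\Sym^2\Ps^1,\Q}\Bigl(1+\sum_{\mathbf y\in\Ps^1(\Q)}H_{\Ps^1}(\mathbf y)^{-9}\Bigr)
=c_{\Sym^2\Ps^1,\Q}\bigl(Z_\Q(\Ps^1,9)+1\bigr),
\]
which is the constant of Theorem~\ref{thm:sym2}; this finishes the verification and shows the count is explained by the strongly saturated $V_{0,0}$ and the $V_{\mathbf y,\mathbf y}$.

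The step I expect to be the main obstacle is this Tamagawa computation: getting the exponent $9$ exactly right requires carefully propagating the $\mathbf y$-twist of the metric on $\mathcal L|_{V_{\mathbf y}}$ through Le~Rudulier's descent construction of $\mathcal L^{(2)}$ on $\Sym^2\Ps^1\cong\Ps^2$ and through the anticanonical measure on the plane, and cleanly separating the diagonal of $V_{\mathbf y,\mathbf y}$ and the mixed loci $\pi(V_{\mathbf y}\times V_0)$ into the error term. Verifying clause~(2) through \cite[Definition~2.3.4]{BaTsch} for $\Ps^2$ and clause~(1) for $\mathbb V$ and $V_{0,0}$ should be routine extensions of the arguments already given for $V$.
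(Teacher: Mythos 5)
Your proposal follows essentially the same route as the paper's proof: reduce to the two weakly saturated subvarieties $V_{0,0}$ and $\mathbb{V}$, recognise the $\mathcal L^{(2)}$-primitive closures of the fibres as $\Sym^2 X_{\mathbf y}\cong\Ps^2$ with $\mathcal L^{(2)}$ restricting to $\mathcal O(1)$, read off $a=3$, $b=1$, $\gamma=\delta=1$, and push the local Tamagawa integral through $\rho_{\mathbf y}$ and the map $(t_1,t_2)\mapsto\{\text{roots of }T^2-t_1T+t_2\}$ to get $\tau_\nu(X_{\mathbf y,\mathbf y})=|y_{\textup m,\nu}|_\nu^{-9[K_\nu:\Q_p]}\tau_\nu(\Sym^2\Ps^1)$, hence $c(V_{\mathbf y,\mathbf y})=c_{\Sym^2\Ps^1,\Q}H_{\Ps^1}(\mathbf y)^{-9}$. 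The only things you add beyond the paper are a (correct) heuristic reason why the mixed loci $\pi(V_{\mathbf y}\times V_{\mathbf y'})$ with $\mathbf y\neq\mathbf y'$ contribute only $O(B^2\log B)$, which the paper leaves implicit in its appeal to the Batyrev--Tschinkel reduction procedure; your description of ``$9=3\times3$'' is a mnemonic rather than a derivation, but you correctly identify the actual calculation (factor out $|y_{\textup m,\nu}|^{-12}$ from the two roots, recover $|y_{\textup m,\nu}|^{+3}$ from the Jacobian of $t_1\mapsto t_1/y_{\textup m,\nu}$, $t_2\mapsto t_2/y_{\textup m,\nu}^2$), which is exactly what the paper does.
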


\begin{proof}
Using the procedure laid out by Batyrev and Tschinkel we first reduce to two weakly saturated subvarieties of $\Sym^2 V$. The first subvariety is $V_{0,0}$, which is also strongly saturated. Even more it is isomorphic to the symmetric square of the $\Ps^1$ with the corresponding induced adelic measure coming from the usual adelic measure on $\mathcal O(1)$. Hence it is known due to Le Rudulier \cite{RudulierThesis} that
\[
N_{V_{0,0},\Q}(B) \sim c_{\Sym^2 \Ps^1,\Q} B^3
\]
as predicted by Batyrev and Tschinkel, or Manin \cite{ManinCon} and Peyre \cite{PeyreConstant} since $\Sym^2 \Ps^1$ is a Fano variety.

The second weakly saturated subvariety $\mathbb V \subseteq \Sym^2 V$ is one which makes the $V_{\mathbf y, \mathbf y}$ into a family of planes over $\Ps^1$. It can be constructed as the fibred product
\[
\begin{tikzcd}
\mathbb V \arrow[r] \arrow[d] & \Sym^2 V \arrow[d,"\Sym^2 \varphi"]\\
\Ps^1 \arrow[r, "\Delta"] & \Sym^2 \Ps^1
\end{tikzcd}
\]
along the diagonal morphism $\Ps^1 \to \Sym^2 \Ps^1$. One can use this construction to show that the set $\mathcal Z$ from Theorem~\ref{thm:sym2} is precisely $\mathbb V(\Q)$.

This morphism $\mathbb V \to \Ps^1$ is an $\mathcal L^{(2)}$-primitive fibration in the words of Definition~2.4.2 in \cite{BaTsch}. It follows from Theorem~\ref{thm:sym2} that
\[
N_{\mathbb V,\Q}(B) \sim c_{\Sym^2 \Ps^1,\Q} Z(\Ps^1,9) B^3.
\]
We will show that this count is also predicted by Batyrev and Tschinkel.

First note that the fibres $V_{\mathbf y, \mathbf y}$ of $\mathbb V \to \Ps^1$ are strongly saturated. As a scheme they are isomorphic to $\Sym^2 \mathbb A^1 \cong \mathbb A^2$, and the line bundle $\mathcal L^{(2)}$ restricts to $\mathcal O(1)$ for each $\mathbf y$. Only the adelic metric depends on $\mathbf y$. First of all, this shows that the natural compactification of any $V_{\mathbf y, \mathbf y}$ is $X_{\mathbf y, \mathbf y} := \Sym^2 X_{\mathbf y} \cong \Sym^2 \Ps^1 \cong \Ps^2$. The following constants only depend on this non-singular compactification with its line bundle and are hence known to be
\[
a(X_{\mathbf y, \mathbf y})=a(\Ps^2)= 3, \quad b(X_{\mathbf y, \mathbf y})=b(\Ps^2)= 1,
\]
\[
\gamma(X_{\mathbf y, \mathbf y})=\gamma(\Ps^2)= 1 \quad \text{ and } \quad \delta(X_{\mathbf y, \mathbf y})=\delta(\Ps^2)= 1.
\]
We now address the last constant $\tau(X_{\mathbf y, \mathbf y})$ which will vary as the adelic metric depends on $\mathbf y$. We compute its local factors using the isomorphism $\rho_{\mathbf y}$ in the proof of Proposition~\ref{prop:VinBTterms}, which induces an isomorphism $V_{\mathbf y, \mathbf y} \to \Sym^2 \mathbb A^1$. The morphism $\mathbb A^2 \to \Sym^2\mathbb A^1$ given by
\[
(t_1,t_2) \mapsto \text{ the roots of } T^2-t_1T+t_2
\]
is also an isomorphism. We will use the composition of these two isomorphisms to pullback the integral on $X_{\mathbf y, \mathbf y}$ to one on $\Ps^2$.
\[
\tau_\nu(X_{\mathbf y, \mathbf y}) = \int_{V_{\mathbf y, \mathbf y}(K_\nu)} \omega_{X_{\mathbf y, \mathbf y},\nu} = \int_{\mathbb A^2(K_\nu)} \prod_{\substack{\text{roots $\tau$ of}\\ T^2-t_1T+t_2}} \frac{dt_1\ dt_2}{\sup\left\{\left|y_0\tau\right|_\nu, \left|y_1\tau\right|_\nu, \left|y^2_1\right|_\nu, \left|y^2_0\right|_\nu \right\}^{3[K_\nu \colon \Q_p]}}.
\]
As in the proof of Proposition~\ref{prop:VinBTterms}, define $y_{\textup m, \nu}$ to be one of the $y_i$ for which $|y_i|_\nu$ is maximal. Then we get
\begin{align*}
\tau_\nu(X_{\mathbf y, \mathbf y}) & = |y_{\textup m, \nu}|^{-12 [K_\nu \colon \Q_p]} \int_{\mathbb A^2(K_\nu)} \prod_{\substack{\text{roots $\tau$ of}\\ T^2-t_1T+t_2}} \frac{dt_1\ dt_2}{\sup\left\{\left|\frac{\tau}{y_{\textup{m},\nu}}\right|_\nu, 1\right\}^{3[K_\nu \colon \Q_p]}}\\
& = |y_{\textup m, \nu}|^{-9 [K_\nu \colon \Q_p]} \int_{\mathbb A^2(K_\nu)} \prod_{\substack{\text{roots $\tau$ of}\\ T^2-t_1T+t_2}} \frac{dt_1\ dt_2}{\sup\left\{\left|\tau\right|_\nu, 1\right\}^{3[K_\nu \colon \Q_p]}}\\
& = |y_{\textup m, \nu}|^{-9 [K_\nu \colon \Q_p]} \tau_\nu(\Sym^2 \Ps^1).
\end{align*}
Here we have used the change of variables $t_1 \mapsto t_1/y_{\textup m, \nu}$ and $t_2 \mapsto t_2/y^2_{\textup m, \nu}$, and the computation of $\tau_\nu$ for the symmetric square of $\Ps^1$ by Le Rudulier \cite{RudulierThesis}. And we conclude that
\[
\tau(X_{\mathbf y, \mathbf y}) = \frac1{H_{\Ps^1}(\mathbf y)^9} \tau(\Sym^2 \Ps^1).
\]
This proves the lemma.
\end{proof}

\subsection{A dichotomy of rational points}

All statements in Theorem~\ref{thm:sym2} follow from the following lemma.

\begin{Lemma}\label{lem:sym2}
Define
\[
\mathcal Z'= \{ \pi(\mathbf x, \bar{\mathbf x}) \in \Sym^2 V(\Q) \colon \mathbf x \in V(\bar\Q), \deg \mathbf x = 2, \varphi(\mathbf x) \in \Ps^1(\Q)\}.
\]
\begin{enumerate}
\item[(a)] As $B \to \infty$ we have
\[
N_{\mathcal Z'}(B) = cB^3 + O\left(B^2 \log B\right),
\]
where the constant $c$ is given by
$$
c_{\Sym^2 \Ps^1, \Q}\left( Z_{\Q}\left(\Ps^1,9\right)+1\right).
$$
\item[(b)] As $B \to \infty$ we have
\[
N_{\Sym^2V(\Q)\setminus\mathcal Z'}(B) \ll B^2\log B.
\]
\end{enumerate}
\end{Lemma}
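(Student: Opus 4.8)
The plan is to use the fibration $\varphi\colon V\to\Ps^1$ to split the count of conjugate quadratic pairs into a one-parameter family of twisted counts on symmetric squares of affine lines, plus the separate contribution of the base line $V_0$, which will supply the extra summand in the constant in part~(a). A point $\pi(\mathbf x,\bar{\mathbf x})\in\mathcal Z'$ with $\varphi(\mathbf x)=\mathbf y$ has $\mathbf x$ a degree-$2$ closed point of the fibre $V_{\mathbf y}$; when $\mathbf y$ is rational the map $\rho_{\mathbf y}$ from the proof of Proposition~\ref{prop:VinBTterms} is a $\Q$-isomorphism $\mathbb A^1\to V_{\mathbf y}$ extending to $X_{\mathbf y}\cong\Ps^1$, so applying $\Sym^2$ identifies $V_{\mathbf y,\mathbf y}$ with $\Sym^2\mathbb A^1\cong\mathbb A^2$ inside $X_{\mathbf y,\mathbf y}\cong\Sym^2\Ps^1\cong\Ps^2$, and---exactly as in the Tamagawa computation of Proposition~\ref{prop:sym2VinBTterms}---the adelic metric that $\mathcal L^{(2)}$ induces on $V_{\mathbf y,\mathbf y}$ is the standard one on $\Sym^2\Ps^1$ scaled at each place $\nu$ by $|y_{\textup m,\nu}|_\nu^{-3}$. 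So $\mathcal Z'$ breaks up, according to the fibre of $\varphi$ and according to whether the pair lies on $V_0$, into the sets $\mathcal Z'_{\mathbf y}$ of degree-$2$ closed points of $\mathbb A^1$ with the $\mathbf y$-twisted $\Sym^2\Ps^1$-height at most $B$, one for each $\mathbf y\in\Ps^1(\Q)$, together with the conjugate pairs on $\Sym^2 V_0\cong\Sym^2\Ps^1$, where $V_0\cong\Ps^1$ carries the usual height and hence $\Sym^2 V_0$ the usual induced metric.

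The technical core is a count of degree-$2$ points of $\mathbb A^1$ with this twisted height, uniform in the twist. Since such a point is a $K$-rational point of $\mathbb A^1$ for a quadratic field $K$, I would run the machinery of Section~\ref{sec:Para} with ground field $K$ in place of $\Q$: describe the point through integral ideals of $\mathcal O_K$, clear the two coprimality conditions by Möbius inversion over the integral ideals of $\mathcal O_K$, and reduce the bounded-height condition to a lattice-point count in a rank-$4$ lattice against a region whose shape depends on the local data and, through the twist, on $\mathbf y$; a variant of Frei's Lemma~\ref{lem:FreiLattice} applied over $K$ then gives for each $K$ a main term of order $B^2$ with power-saving error. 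Summing over quadratic fields $K$, organised by discriminant and ideal class as in the work of Schmidt and Le Rudulier, assembles these per-field counts into the expected leading term of order $B^3$ and rebuilds the archimedean and non-archimedean Tamagawa factors of $\Sym^2\Ps^1$; since $X_{\mathbf y,\mathbf y}=\Sym^2\Ps^1$ is already smooth, no desingularisation is needed. Carrying the $\mathbf y$-twist through should give a count $c_{\Sym^2\Ps^1,\Q}H_{\Ps^1}(\mathbf y)^{-9}B^3+O\!\big(H_{\Ps^1}(\mathbf y)^{-9+\varepsilon}B^{2}\log B\big)$ for the rational points of $X_{\mathbf y,\mathbf y}$ with the twisted height, uniformly in $\mathbf y$; subtracting the boundary $X_{\mathbf y,\mathbf y}\setminus V_{\mathbf y,\mathbf y}$ (a single line, $O(B^2)$) and the ``split'' points of $V_{\mathbf y,\mathbf y}(\Q)$ that are pairs of rational points leaves $N_{\mathcal Z'_{\mathbf y}}(B)$ with the same main term, and summing over $\mathbf y\in\Ps^1(\Q)$ (restricted to $H_{\Ps^1}(\mathbf y)\ll\sqrt B$, the tail being negligible) gives $c_{\Sym^2\Ps^1,\Q}Z_\Q(\Ps^1,9)B^3$, the split pairs totalling at most $\sum_{\mathbf y}c(V_{\mathbf y})^2B^2\log B\ll c_{\Ps^1,\Q}^2Z_\Q(\Ps^1,6)B^2\log B$ by Theorem~\ref{thm:introductionV}. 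Adding the direct Le Rudulier count $c_{\Sym^2\Ps^1,\Q}B^3+O(B^2\log B)$ on $\Sym^2 V_0\cong\Sym^2\Ps^1$ and subtracting the $O(B^{3/2})$ overlap of $V_0$ with the fibres proves~(a).

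For part~(b), the observation is that $\Sym^2 V(\Q)\setminus\mathcal Z'$ is, up to the $O(B^2\log B)$ already accounted for, the set of $\pi(\mathbf x_1,\mathbf x_2)$ with $\mathbf x_1,\mathbf x_2\in V(\Q)$, together with the diagonal $\pi(\mathbf x,\mathbf x)$, $\mathbf x\in V(\Q)$; here one uses the classification of a degree-$2$ closed point $\mathbf x$ of $V$ by the degree of $\varphi(\mathbf x)$ to check that every remaining conjugate quadratic pair is already in $\mathcal Z'$. The diagonal contributes $N_{V,\Q}(\sqrt B)=O(B)$, and the genuine pairs are counted by $\#\{\mathbf x_1,\mathbf x_2\in V(\Q):H_\Q(\mathbf x_1)H_\Q(\mathbf x_2)\le B\}$, which by Theorem~\ref{thm:introductionV} and partial summation equals $2c_{V,\Q}^2B^2\log B(1+o(1))$, as recorded in Remark~\ref{rmk:rat}; in particular it is $\ll B^2\log B$.

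The main obstacle is the uniformity in the twist in the second paragraph: one has to redo the Schmidt/Le Rudulier count of $\Sym^2\Ps^1$ with an explicit twist and with error terms uniform in the twisting parameter $\mathbf y\in\Ps^1(\Q)$, so that summing over $\mathbf y$ keeps everything within the claimed $O(B^2\log B)$. Keeping track of the twist through the archimedean volume computation and through the interchange of the sums over quadratic fields, ideal classes and the Möbius variable is the delicate point; the geometric reductions, the treatment of $V_0$, the split pairs, and the pair-of-rational-points count in~(b) are by comparison soft, each being bounded throughout by $B^2\log B$.
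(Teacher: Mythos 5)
Your plan for part (a)---fibring $\mathcal Z'$ over $\mathbf y\in\Ps^1(\Q)$, identifying each $V_{\mathbf y,\mathbf y}$ with degree-$2$ points of $\Ps^1$ carrying a $\mathbf y$-twisted height, running a uniform-in-$\mathbf y$ version of Schmidt's count, and then summing over $\mathbf y$ with the base line $V_0$ supplying the extra summand---is essentially the route the paper takes (it works through the universal-torsor parametrisation and the set $U_1$ with $(y_0,y_1)\in R_1\cap\Q^2$, then applies Schmidt's Theorem~3 with an error $O(B^2/\kappa^6)$ per fibre); the uniformity concern you flag is real but is exactly what that fibrewise error bound resolves, since $\sum_{\mathbf y}\kappa^{-6}$ over points of height $\kappa$ converges.

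Part (b), however, has a genuine gap. You assert that once the type-1 pairs $\pi(\mathbf x_1,\mathbf x_2)$ with both $\mathbf x_i\in V(\Q)$ and the diagonal are set aside, ``every remaining conjugate quadratic pair is already in $\mathcal Z'$.'' That is false. Membership in $\mathcal Z'$ requires $\varphi(\mathbf x)\in\Ps^1(\Q)$, but a degree-$2$ point $\mathbf x\in V(\bar\Q)$ can perfectly well have $\varphi(\mathbf x)=(t_0\colon t_1)$ of degree $2$---for instance take $t_1/t_0$ to generate the quadratic field. These conjugate pairs lie in $\Sym^2V(\Q)\setminus\mathcal Z'$ and are not covered anywhere in your sketch. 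In the paper they form the set $U_2$, and bounding $N_{U_2}(B)\ll B^2\log B$ is the technical heart of the lemma: one sums the per-field main terms $c_{\Ps^1,K}Z_{K\setminus\Q}(\Ps^1,3)B^2$ over quadratic $K$ ordered by discriminant $\Delta$, restricts to $|\Delta|\le 4B^2$ via Silverman's height lower bound, rewrites $c_{\Ps^1,K_\Delta}$ in terms of $L(1,\Delta)/L(2,\Delta)$ using Hecke's class number formula, and then invokes the moment bounds $\sum_{|\Delta|\le Y}L(1,\Delta)/L(2,\Delta)\ll Y$ from Schmidt's appendix and $\sum_{|\Delta|\le Y}L(1,\Delta)^2/L(2,\Delta)^2\ll Y$ from Le Rudulier's Th\'eor\`eme~5, with Abel summation to insert the $|\Delta|^{-1/2}$ and $|\Delta|^{-1}$ weights. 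Without some such argument for $U_2$, part (b), and hence Theorem~\ref{thm:sym2}, is not established.
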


These statements imply Theorem \ref{thm:sym2}, since the set $\mathcal Z'$ is contained in the set $\mathcal Z$ from Theorem~\ref{thm:sym2}.
\begin{proof}
Let $\tilde{\mathbf x} = \pi(\mathbf x_1,\mathbf x_2) \in \Sym^2 V(\Q)$ be a rational point then $\sigma(\tilde{\mathbf x}) = \tilde{\mathbf x}$ for all $\sigma \in \Gal(\bar \Q/ \Q)$. This means that either $\mathbf x_1, \mathbf x_2 \in V(\Q)$, or $\mathbf x_1 \in V(\bar \Q)$ is a point of degree 2 and $\mathbf x_2$ is its conjugate under the action of the non-trivial element of the Galois group $\Gal(\Q(\mathbf x_1)/\Q)$. We rewrite the counting function
\begin{equation}\label{SymAusgang}
\begin{split}
N_{\Sym^2 V,\Q}\left(B\right) =& \frac12\# \left\{\mathbf x,\mathbf x' \in V(\Q)\colon \mathbf x \neq \mathbf x', \hspace{2mm} H_{\Q}(\mathbf x)H_{\Q}(\mathbf x') \leq B\right\} \\&+ \frac12\# \left\{\mathbf x\in V(\bar \Q)\colon [\Q(\mathbf x) \colon \Q] = 2, \hspace{2mm} H_{\Q(\mathbf x)}(\mathbf x) \leq B \right\} + O(B\log B).
\end{split}
\end{equation}
We have excluded points of the shape $(\mathbf x, \mathbf x)$ for $\mathbf x \in V(\Q)$ from the first set since their contribution is small.
We will refer to points in the first set as rational points of type 1 and accordingly to points in the second set as rational points of type 2. For the rational points of type 2 we used the fact that the height is invariant under the action of elements of the Galois group.

We start by giving a count of the rational points of type 1 which are contained in the statement of Lemma \ref{lem:sym2} (b). After that we will split the the type 2 points into two sets and prove statement (a) of Lemma \ref{lem:sym2}. At last we will show that the second set of type 2 points has the desired contribution to complete the proof of statement (b).

We rewrite the contribution coming from type 1 rational points and obtain
$$
\sum_{\substack{\mathbf x \in V(\Q)\\ H_{\Q}(\mathbf x) \leq B}} \# \left\{\mathbf x' \in V(\Q) \colon H_{\Q}(\mathbf x') \leq \frac{B}{H_{\Q}(\mathbf x)} \right\}.
$$
We use Theorem~\ref{thm:countonV} to bound the cardinality of the set and denote by $c_{V,\Q}$ the constant of the main term. We obtain
\begin{equation}\label{eq:RatPointFirst1}
c_{V,\Q} B^2\sum_{\substack{\mathbf x \in V(\Q)\\ H_{\Q}(\mathbf x) \leq B}} \frac{1}{H_{\Q}(\mathbf x)^2} + O\left(B^{3/2}\log B\sum_{\substack{\mathbf x \in V(\Q)\\ H_{\Q}(\mathbf x) \leq B}} \frac{1}{H_{\Q}(\mathbf x)^{3/2}}\right).
\end{equation}
For both the main and the error term we need to estimate the partial height zeta function $Z_{\Q} (V,\delta,B)$ for $\delta =2$ and $\delta = 3/2$. Using Abel's summation formula we arrive at the following expression
	$$
	Z_{\Q}(V,\delta, B) = c_{V,\Q} B^{2-\delta} + \delta c_{V,\Q} \int_1^B u^{1-\delta}du + O\left(B^{3/2-\delta}\right).
	$$
Substituting this into \eqref{eq:RatPointFirst1} yields the total contribution coming from rational points of type 1 as claimed in Remark \ref{rmk:rat}.

In order to determine the contribution of the rational points of type 2 to the count we need to estimate the set
\begin{equation*}
\# \left\{\mathbf x \in V(\bar{\Q}) \colon \deg \mathbf x = 2, H_{\Q(\mathbf x)} (\mathbf x) \leq B \right\}.
\end{equation*}
We reuse ideas and statements of the proof of Theorem~\ref{thm:countonV}. There are three lines on $V(\bar \Q)$ on which points with vanishing coordinates lie, namely $\{t_0 = t_3 =0 \}, \{t_1 = t_2 =0\}, \{t_2 = t_3=0\}$. We will again denote by $U(\bar \Q)$ the set of points on $V(\bar \Q)$ with non-vanishing coordinates and use the parametrisation of rational points in the following way.

For a fixed degree 2 number field $K$ we recall the parametrisation of rational points on $U(K)$ that we met in Section \ref{sec:Para}. For $C, C_0\in \mathcal D_K$ and $\mathbf y \in \mathcal R \cap M(C,C_0)$ we have the parametrisation of rational points \eqref{eq:parametrisation}. We sum \eqref{eq:EndOfSectionSum} over all quadratic number fields and obtain
\begin{equation}\label{eq:sumdeg2sym2}
\sum_{[K \colon \Q] =2} \frac{1}{\omega_K^2}\sum_{ (C,C_0) \in \mathcal D_K^2} \sum_{\substack{(y_0,y_1) \in R_1 \\ y_j \in C_j \text{ for }j =0,1 \\ (y_0 C_0^{-1}, y_1C_1^{-1}) =\mathcal O_K}} \sum_{\substack{(y_2,y_3) \in R_1\\y_i \in C_i \text{ for }i=2,3\\ (y_2 C_2^{-1}, y_3 C_3^{-1}) = \mathcal O_K \\ H_K(y_0y_3, y_1y_3, y_1^2 y_2, y_0^2 y_2) \leq B }} 1.
\end{equation}
The dominant contribution to the main term comes from the points $(y_0,y_1) \in R_1 \cap \Q^2$. Therefore, we divide the sum into two parts and define the sets
$$
U_1 = \bigcup_{K} \left\{(y_0,y_1) \times (y_2,y_3) \colon (y_0,y_1) \in R_1 \cap \Q^2, (y_2, y_3) \in R_1, \deg(y_2,y_3) = 2 \right\},
$$
and $U_2$ which is the complement of $U_1$ in $U(K)$. In the following we show that $U_1$ dominates the count and $U_2$ contributes solely to the error term.

There is a map from $R_1 \to \Ps^1(K)$ that we met in Lemma \ref{lem:NiceMap}. We obtain $(y_0\colon y_1) \in \Ps^1(\Q)$ and so $C_0 = C_1 = \mathcal O_K$ and $C_2 = C_3 = C$. The only part of the sum in \eqref{eq:sumdeg2sym2} that is relevant for this analysis is the one where $C_0$ is fixed to be $\mathcal O_K$, so we obtain 
	$$
	\sum_{[K \colon \Q] =2}\frac{1}{\omega_K} \sum_{ C \in \mathcal D_K} \sum_{\substack{(y_0\colon y_1)\in \Ps^1(\Q)\\y_0y_1 \neq 0}} \sum_{\substack{(y_2,y_3) \in R_1\\ [\Q(y_2\colon y_3) \colon \Q] = 2\\y_i \in C \text{ for }i=2,3\\ (y_2\mathcal O_K, y_3 \mathcal O_K) = C \\ H_K(y_0y_3, y_1y_3, y_1^2 y_2, y_0^2 y_2) \leq B}} 1,
	$$
	where $\Q(y_2\colon y_3)$ is the smallest number field containg $y_2/y_3$. Note that the last sum combined with the sum over the integral ideal representatives of elements of the class group is the same as taking the sum over degree 2 points in $\Ps^1(K)$ with non-vanishing coordinates. Changing the order of summation yields
	\begin{equation}\label{eq:pointsQbarSym}
	\sum_{ (y_0 \colon y_1)\in \Ps^1 (\Q)} \sum_{\substack{(y_2\colon y_3) \in\Ps^1(\bar{\Q})\\ [\Q(y_2\colon y_3) \colon \Q] = 2\\ y_2y_3 \neq 0\\ H_K(y_0y_3, y_1y_3, y_1^2 y_2, y_0^2 y_2) \leq B}} 1.
	\end{equation}
	Let $\kappa :=  \max_{j=0,1} \{|y_j|\}$ and write $\alpha := y_3/y_2$. The height of two representatives of the same point is the same so we have
	$$
	H_K(y_0y_3, y_1y_3, y_1^2 y_2, y_0^2 y_2) = H_K(y_0 \alpha, y_1 \alpha, y_1^2,y_0^2) = \mathfrak N(\mathfrak J(\alpha,1)) \prod_{i=1}^2 \left\{\kappa \left| \alpha^{(i)}\right|, \kappa^2 \right\}.
	$$
	Following the proof of Theorem 3 of \cite{Schmidt} in section 9, we obtain
	$$
	\sum_{\substack{(y_0\colon y_1) \in \Ps^1 (\Q)\\y_0y_1 \neq 0\\ \kappa\leq B^{1/4}}}\left( \frac{2c_{\Sym^2\Ps^1, \Q}}{ \kappa^{9}}B^3 + O\left( \frac{B^2}{\kappa^6}\right)\right),
	$$
	where $c_{\Sym^2\Ps^1,\Q} = 4\zeta(3)^{-1}$.
	Extending the sum to infinity yields no dominant contribution to the error term. We obtain $Z_{\Q}(\Ps^1,9)$ from the sum over $\kappa$ but must deduct 2 from it since we are missing two points from $\Ps^1(\Q)$. We account for the three missing lines with vanishing coordinates and obtain the desired main term and therefore Lemma \ref{lem:sym2} (a).
	
	Next, we treat the second sum. We proceed as in Section \ref{sec:Para} and take the sum of \eqref{eq:EndOfSectionSum} over all quadratic number fields. The only thing that is different from the previous treatment is that we do not get the whole height zeta function of $\Ps^1(K)$, instead we get
	$$
	N_{U_2}(B) = \sum_{[K \colon \Q] =2} \sum_{\substack{\mathbf x \in \Ps^1(K) \setminus \Ps^1(\Q)}} \frac{c_{\Ps^1,K}}{H_K(\mathbf x)^3} B^2 + O\left(\frac{c_{\Ps^1, K}}{\zeta_K\left(3/2\right)}B^{3/2}\log B\right).
	$$
	We denote the sum over $\frac{1}{H_K(\mathbf x)^3}$ in the main term by the height zeta function $Z_{K\setminus \Q}(\Ps^1,3)$.
	We continue as in Section 8 of \cite{Schmidt} and order the quadratic number fields $K$ by their discriminant $\Delta_K$. We impose a bound an the absolute value of $\Delta$ by using \cite[Theorem 2]{Silverman}. Let $\mathbf x \in \Ps^3(\bar K)$ with $[\Q(\mathbf x) \colon \Q] =2$, then the discriminant $\Delta$ of $\Q(\mathbf x)$ has the property
	$$
	|\Delta| \leq 4 H_K(\mathbf x)^2.
	$$
	Let $\mathcal D^\pm$ be the set of fundamental discriminants with $\Delta >1$ or $\Delta <0$, respectively. We denote elements of $\mathcal D^\pm$ that are bounded in absolute value by $Y$ by $\mathcal D^\pm(Y)$. We change the index of the field invariants to $\Delta$ and denote by $K_{\Delta}$ the quadratic number field with discriminant $\Delta$. We split the sum and obtain
	\begin{equation}\label{eq:AusgangsDelta}
	\begin{split}
	N_{U_2}(B) = \sum_{ \Delta \in \mathcal D^\pm( 4B^2)}\left(c_{\Ps^1, K_\Delta} Z_{K_\Delta\setminus \Q}\left(\Ps^1,3\right) B^2 + O\left(\frac{c_{\Ps^1,K_{\Delta}}}{\zeta_{\Delta}(3/2)}B^{3/2}\log B\right)\right) .
	\end{split}
	\end{equation}
	We are left with the complete height zeta function in the main term. We want to obtain an asymptotic formula that is dependent on the field discriminant. Therefore we use Abel's summation formula and obtain
	$$
	\sum_{\substack{\mathbf x \in \Ps^1(K_{\Delta})\\ \deg \mathbf x = 2}} \frac1{H_{\Delta}(\mathbf x)^3} = \lim_{B\rightarrow \infty} \frac{N_{\Ps^1, K_\Delta\setminus \Q}(B)}{B^3} + 3  \int_1^B \frac{N_{\Ps^1, K_\Delta\setminus \Q}(u)}{u^4} du,
	$$
	We use \cite[Theorem 2]{Schmidt} which yields $N_{\Ps^1, K_{\Delta}\setminus \Q}(B) = c_{\Ps^1, K_{\Delta}} B^2 + O( \hat{c}_{\Ps^1,K_{\Delta}} B^{3/2})$, where 
	$$
	\hat{c}_{\Ps^1,K_{\Delta}} = \frac1{\sqrt{|\Delta|}} \left( h_{\Delta} R_{\Delta}\max\{ \log\left(h_{\Delta}R_{\Delta}\right),1\} \right)^{1/2}.
	$$
	With this we obtain for the height zeta function
	$$
	\sum_{\substack{\mathbf x \in \Ps^1(K_{\Delta})\\ \deg \mathbf x \leq 2}} \frac1{H_{\Delta}(\mathbf x)^3} = 3 c_{\Ps^1,K} + O\left(\hat{c}_{\Ps^1,K_{\Delta}}\right).
	$$
	Hence,  we can rewrite \eqref{eq:AusgangsDelta} to
	\begin{equation}\label{eq:NSYM2}
	N_{U_2}(B) \ll 
	\sum_{\Delta\in \mathcal D^{\pm}( 4B^2)} \left(c_{\Ps^1, K_\Delta}^2+ c_{\Ps^1, K_\Delta}\hat{c}_{\Ps^1,K_{\Delta}}\right)B^2 +  \frac{ c_{\Ps^1, K_\Delta}}{\zeta_{\Delta}(3/2)} B^{3/2} \log B
	\end{equation}
	Clearly, $\hat{c}_{\Ps^1, K_\Delta} \ll c_{\Ps^1, K_\Delta}$ as $\Delta$ tends to $\infty$, so the dominant contribution is coming from $c_{\Ps^1, K_\Delta}^2$. 
	We can relate $c_{\Ps^1, K_\Delta}$ to a quotient of $L$-functions and use the works by Schmidt \cite{Schmidt} and Le Rudulier \cite{RudulierThesis} to bound these.
	
	Let $(\frac{\Delta}{\ell})$ be the Kronecker symbol. It makes sense to study the $L$-function belonging to the quadratic field with discriminant $\Delta$ which is the $L$-function attached to the Kronecker symbol. It is defined by
	$$
	L(s,\Delta) := \sum_{\ell = 1}^{\infty} \left( \frac{\Delta}{\ell}\right) \ell^{-s}.
	$$
	To connect $c_{\Ps^1, K_\Delta}$ to $L(s,\Delta)$, we use the two classical identities
	$$
	\zeta_{\Delta}(s) = \zeta(s)L(s,\Delta),\hspace{10mm}
	\frac{2^r(2\pi)^s R_{\Delta} h_{\Delta}}{\omega_{\Delta}\sqrt{|\Delta|}} = L(1,\Delta),
	$$
	\cite[(137)]{Hecke} and \cite[(145)]{Hecke}, respectively.
	Combining both equalities yields
	$$
	\frac{c_{\Ps^1, K_\Delta}}{2^{r+s-1}}=\frac{2^{2r}(2\pi)^{2s} h_{\Delta}R_{\Delta}}{|\Delta| \omega_{\Delta} \zeta_{\Delta}(2)} = \frac{2^r (2\pi)^s L(1,\Delta)}{\sqrt{|\Delta|} \zeta(2) L(2,\Delta)}.
	$$ 
	Besides, we get $\zeta_{\Delta}(3/2)^{-1} = \zeta(3/2)^{-1} L(3/2,\Delta)^{-1}$. Using Möbius inversion we obtain
	$$
	L\left(\frac32, \Delta\right) \sum_{m=1}^\infty \mu(m) \left(\frac{\Delta}{m}\right) m^{-3/2} = \sum_{c=1}^\infty \left(\frac{\Delta}{n}\right) n^{-3/2}\sum_{a|c} \mu(a) = 1.
	$$
	This implies
	$$
	L\left(\frac32, \Delta\right)^{-1} = \sum_{m=1}^\infty \mu(m) \left(\frac{\Delta}{m}\right) m^{-3/2} \leq \zeta\left(\frac32\right). 
	$$
	So both summands of \eqref{eq:NSYM2} solely depend on  $c_{\Ps^1,K}$ and $c_{\Ps^1,K}^2$. We obtain trivial bounds for $2^r(2\pi)^s2^{r+s-1}$ since these invariants are only depend on the sign of the discriminant. So we need estimates for
	\begin{align*}
	\sum_{\Delta \in \mathcal D^{\pm}(Y)} \frac{L(1,\Delta)}{\sqrt{|\Delta|}L(2,\Delta)} \text{ and } \sum_{\Delta \in \mathcal D^{\pm}(Y)} \frac{L(1,\Delta)^2}{|\Delta|L(2,\Delta)^2}.
	\end{align*}
		By the proposition of the appendix of \cite{Schmidt} we know that
		$$
		\sum_{ \Delta \in \mathcal D^\pm(Y)} \frac{L(1,\Delta)}{L(2,\Delta)} \ll Y.
		$$
		We use Abel's summation formula together with Schmidt's proposition to obtain
		\begin{align}\label{eq:SchmidtL}
		\sum_{ \Delta \in \mathcal D^\pm(Y)} \frac{L(1,\Delta)}{\sqrt{|\Delta|}L(2,\Delta)} \ll \sqrt{Y} + \int_1^Y \frac{u}{2u^{3/2}} du = 2\sqrt{Y}.
		\end{align}
		In the same way, we use \cite[Introduction, Théorème 5]{RudulierThesis} which yields the bound
		$$
		\sum_{ \Delta \in \mathcal D^\pm(Y)} \frac{L(1,\Delta)^2}{L(2,\Delta)^2} \ll Y.
		$$
		We use Abel's summation formula again and obtain
		\begin{equation}\label{eq:RedulierL}
		\sum_{ \Delta \in \mathcal D^\pm(Y)} \frac{L(1,\Delta)^2}{|\Delta| L(2,\Delta)^2} \ll 1 + \int_1^Y \frac1u du = \log Y + O(1).
		\end{equation}
	We combine \eqref{eq:NSYM2} with \eqref{eq:SchmidtL} and \eqref{eq:RedulierL} to obtain
	\begin{equation}\label{eq:ratpoint22}
	N_{U_2} (B) \ll B^2 \log B.
	\end{equation}
	We combine the count on $U_2$ together with the previously established count of rational points of type 1 and obtain Lemma \ref{lem:sym2} (b).
	
\end{proof}

\section{Interpreting the point counts on other symmetric squares}

Le Rudulier \cite{RudulierThesis} did the point count for the symmetric square of other rational surfaces. However she did not interpret her results in terms of the conjecture by Batyrev and Tschinkel. Let us first state her result. Note that our height comes from an adelic metric on $\mathcal O(1)$ while Le Rudulier uses the height on $\omega_{\Ps^1}^\vee$. This explains the difference between our version and the original result.

\begin{Theorem}
Consider the usual adelic measure on $\mathcal O(1)$ on $\Ps^1$ over $\Q$. This induces an adelic measure on $\Sym^2 \left( \Ps^1 \times \Ps^1\right)$. The following result
\[
N_{\Sym^2 \left( \Ps^1 \times \Ps^1\right), \Q}(B) \sim 2 c_{\Sym^2 \Ps^1,\Q} Z(\Ps^1,6) B^3
\]
from \cite{RudulierThesis} agrees with the prediction by Batyrev and Tschinkel.
\end{Theorem}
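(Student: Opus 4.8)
The plan is to transplant the proof of Proposition~\ref{prop:sym2VinBTterms} to $\Sym^2(\Ps^1\times\Ps^1)$, using that $\Ps^1\times\Ps^1$ carries \emph{two} fibrations into lines. Write $p_1,p_2\colon\Ps^1\times\Ps^1\to\Ps^1$ for the two projections. For $i=1,2$ and $\mathbf y\in\Ps^1(\Q)$ the fibre $\ell^{(i)}_{\mathbf y}$ of $p_i$ is a line which, with respect to $\mathcal O(1,1)$, is embedded as a copy of $\Ps^1$ carrying the usual height scaled by the constant $H_{\Ps^1}(\mathbf y)$; it produces a closed subscheme $X^{(i)}_{\mathbf y}:=\Sym^2\ell^{(i)}_{\mathbf y}\subseteq\Sym^2(\Ps^1\times\Ps^1)$ with $X^{(i)}_{\mathbf y}\cong\Sym^2\Ps^1\cong\Ps^2$. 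As in the cubic case these planes are exactly the minimal closed subvarieties $V_\alpha$ with $\theta_\alpha>0$: every proper subvariety of such a $\Ps^2$ contributes strictly fewer points than $B^3$, while $N_{X^{(i)}_{\mathbf y}}(B)$ and $N_{\Sym^2(\Ps^1\times\Ps^1)}(B)$ are both of order $B^3$ by Schmidt \cite{Schmidt} and \cite{RudulierThesis}.

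First I would carry out the reduction step of Batyrev and Tschinkel. The two one-parameter families of planes sweep out the two three-dimensional subvarieties
\[
\mathbb V_i := \Sym^2(\Ps^1\times\Ps^1)\times_{\Sym^2\Ps^1,\,\Delta}\Ps^1\qquad(i=1,2),
\]
obtained by pulling $\Sym^2 p_i$ back along the diagonal $\Delta\colon\Ps^1\to\Sym^2\Ps^1$, so that $\mathbb V_1\cup\mathbb V_2\subsetneq\Sym^2(\Ps^1\times\Ps^1)$ is a proper closed subvariety containing all $V_\alpha$; hence condition~(1) of Conjecture~\ref{conj:BT} fails for $\Sym^2(\Ps^1\times\Ps^1)$ itself, exactly as it did for $\Sym^2 V$. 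One therefore reduces, as in Proposition~\ref{prop:sym2VinBTterms}, to the two weakly saturated subvarieties $\mathbb V_1,\mathbb V_2$. Each projection $\mathbb V_i\to\Ps^1$ is now an honest $\mathcal L^{(2)}$-primitive fibration (flat and proper with every fibre $X^{(i)}_{\mathbf y}\cong\Ps^2$), fitting Definition~2.4.2 of \cite{BaTsch} even more cleanly than the cubic surface did; the $X^{(i)}_{\mathbf y}$ are strongly saturated, and within $\mathbb V_i$ they satisfy condition~(1) since their union is dense in $\mathbb V_i$. One also checks that $\mathbb V_1\cap\mathbb V_2$ is lower dimensional — it consists of configurations $\pi(P,P)$ with $P\in(\Ps^1\times\Ps^1)(\Q)$, contributing only $O(B\log B)$ — so the reduction introduces no double counting.

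Next I would compute the constant. The compactification of each $X^{(i)}_{\mathbf y}$ is $\Sym^2\Ps^1\cong\Ps^2$ with $\mathcal L^{(2)}$ restricting to $\mathcal O(1)$, so the invariants $a=3$, $b=1$, $\gamma=1$, $\delta=1$ and condition~(2) are verified exactly as in Proposition~\ref{prop:sym2VinBTterms} ($\mathcal O(1)$ being proportional to the anticanonical $\mathcal O(3)$ on $\Ps^2$). Only the Tamagawa number changes: using the isomorphisms $\mathbb A^2\to\Sym^2\mathbb A^1\subseteq X^{(i)}_{\mathbf y}$ from that proof, the local integrand per root $\tau$ is $\big(\max_j\{|y_j|_\nu\}\,\max\{1,|\tau|_\nu\}\big)^{-3[K_\nu\colon\Q_p]}$, and factoring $\max_j\{|y_j|_\nu\}$ out of each of the two roots gives $\tau_\nu(X^{(i)}_{\mathbf y})=\max_j\{|y_j|_\nu\}^{-6[K_\nu\colon\Q_p]}\,\tau_\nu(\Sym^2\Ps^1)$; the exponent is $6$ rather than $9$ precisely because the fibre now carries the genuine usual height, so no Jacobian from rescaling the roots appears. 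Taking the product over all places yields $\tau(X^{(i)}_{\mathbf y})=H_{\Ps^1}(\mathbf y)^{-6}\,\tau(\Sym^2\Ps^1)$, hence $c(X^{(i)}_{\mathbf y})=c_{\Sym^2\Ps^1,\Q}/H_{\Ps^1}(\mathbf y)^6$, so $\sum_{\mathbf y\in\Ps^1(\Q)}c(X^{(i)}_{\mathbf y})=c_{\Sym^2\Ps^1,\Q}\,Z_{\Q}(\Ps^1,6)$ for each $i$. The Batyrev--Tschinkel prediction for $\Sym^2(\Ps^1\times\Ps^1)$ is therefore $2\,c_{\Sym^2\Ps^1,\Q}\,Z_{\Q}(\Ps^1,6)\,B^3$, which is the asymptotic of \cite{RudulierThesis}.

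The main obstacle is the bookkeeping around condition~(1): one must recognise that $\Sym^2(\Ps^1\times\Ps^1)$ violates it, perform the reduction to $\mathbb V_1$ and $\mathbb V_2$ in the style of Proposition~\ref{prop:sym2VinBTterms}, and confirm that the complement $\Sym^2(\Ps^1\times\Ps^1)(\Q)\setminus(\mathbb V_1\cup\mathbb V_2)$ contributes only $o(B^3)$ (this follows from the finer statement of \cite{RudulierThesis}, where that complement contributes $O(B^2\log^3B)$; the Manin-type growth $B^2\log^2B$ there additionally requires thin sets of type~II, but that is irrelevant for the $B^3$ main term considered here) and that the overlap $\mathbb V_1\cap\mathbb V_2$ is negligible. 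Once the reduction is in place, the constant computation is a routine variant of Proposition~\ref{prop:sym2VinBTterms}.
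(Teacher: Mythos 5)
Your proposal follows essentially the same route as the paper's own proof: identify the two one-parameter families of planes $\Sym^2\Lambda_{\mathbf y}$ coming from the two rulings, reduce to these, verify that $\mathcal L^{(2)}$ restricts to $\mathcal O(1)\cong\omega_{\Ps^2}^{\vee\,\otimes 1/3}$ on each fibre, and compute the local Tamagawa factors to get $\tau(\Sym^2\Lambda_{\mathbf y})=H(\mathbf y)^{-6}\tau(\Sym^2\Ps^1)$, then sum over $\mathbf y\in\Ps^1(\Q)$ and over the two rulings. Your extra bookkeeping (naming $\mathbb V_1,\mathbb V_2$ as explicit fibred products and checking $\mathbb V_1\cap\mathbb V_2$ is negligible) goes slightly beyond what the paper writes out but is the same argument; the Tamagawa computation — including the explanation of why the exponent is $6$ rather than $9$ — matches the paper's computation exactly.
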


One can show that $\Sym^2(\Ps^1 \times \Ps^1)$ is a Fano fourfold and Le Rudulier did the count and checked that the removal of a certain thin subset $\mathcal Z \subseteq \Sym^2(\Ps^1 \times \Ps^1)(\Q)$ lowers the order of the point count to $B^2 \log B$ as predicted by Manin's conjecture. We will show that the order $B^3$ of the total point count is explained by the conjecture by Batyrev and Tschinkel. In this case the many lines $\Lambda$ on $\Ps^1 \times \Ps^1$ give projective planes $\Sym^2 \Lambda$ on $\Sym^2(\Ps^1 \times \Ps^1)$ which explain the count.

\begin{proof}
The proof of Th\'eor\`eme~4.6 in \cite{RudulierThesis} shows that main contribution for the point count comes from points $(\mathbf x,\bar{\mathbf x})$ for an $\mathbf x=(\mathbf x_0,\mathbf x_1) \in (\Ps^1 \times \Ps^1)(\bar{\Q})$ for which one of the points $\mathbf x_0$ and $\mathbf x_1$ is rational point on the projective line the other a quadratic point. This actually gives rise to two naturally weakly saturated subsets depending on which coordinate is rational. Both subscheme have a natural filtration given by the rational coordinate. We will show that the fibres of these filtrations are strongly saturated varieties isomorphic to $\Ps^2$. 

Let $\Lambda_{\mathbf y}$ be a fibre of either projection $\Ps^1 \times \Ps^1 \to \Ps^1$ of a point $\mathbf y \in \Ps^1(\Q)$. We consider the Batyrev--Tschinkel conjecture for $\Sym^2 \Lambda_{\mathbf y} \subseteq \Sym^2(\Ps^1 \times \Ps^1)$ for the induced adelic metric. As in the proof of Proposition~\ref{prop:sym2VinBTterms} we see that $\mathcal L^{(2)}$ on $\Sym^2(\Ps^1 \times \Ps^1)$ restricts to $\mathcal O(1)$ on $\Lambda_{\mathbf y, \mathbf y} := \Sym^2 \Lambda_{\mathbf y} \cong \Ps^2$. As before this shows that the expected order of growth is $B^3$.

For the constant we only need to compare the local Tamagawa numbers to those of $\Sym^2 \Ps^1$ with the adelic metric induced from the usual one on $\Ps^1$. We again fix a representation $\mathbf y=(y_0,y_1)$ and let $y_{\textup m, \nu}$ be one of the $y_i$ for which $|y_i|_\nu$ is maximal. 

\begin{align*}
\tau_\nu(\Sym^2 \Lambda_{\mathbf y}) & = \int_{\Lambda_{\mathbf y, \mathbf y}} \omega_{\Lambda_{\mathbf y, \mathbf y},\nu}\\
	& = \int_{\mathbb A^2(K_\nu)} \prod_{\substack{\text{roots $\tau$ of}\\ T^2-t_1T+t_2}} \frac{dt_1\ dt_2}{\sup\left\{\left|y_0\right|_\nu, \left|y_1\right|_\nu\right\}^{3[K_\nu \colon \Q_p]}\sup\left\{\left|\tau\right|_\nu, 1 \right\}^{3[K_\nu \colon \Q_p]}}\\
& = |y_{\textup{m},\nu}|_\nu^{-6[K_\nu \colon \Q_p]} \int_{\mathbb A^2(K_\nu)} \prod_{\substack{\text{roots $\tau$ of}\\ T^2-t_1T+t_2}} \frac{dt_1\ dt_2}{\sup\left\{\left|\tau\right|_\nu, 1 \right\}^{3[K_\nu \colon \Q_p]}}\\
& = |y_{\textup{m},\nu}|_\nu^{-6[K_\nu \colon \Q_p]} \tau_\nu(\Sym^2 \Ps^1_{\Q}).
\end{align*}
We conclude that $\tau(\Sym^2 \Lambda_{\mathbf y})=\frac{\tau(\Sym^2 \Ps^1_{\Q})}{H(\mathbf y)^6}$ and hence
\[
c(\Sym^2 \Lambda_{\mathbf y})=\frac{c(\Sym^2 \Ps^1_{\Q})}{H(\mathbf y)^6}.
\]
Summing over all $\mathbf y \in \Ps^1(\Q)$ and both projections we see that the result agrees with the count done by Le Rudulier.
\end{proof}

In contrast, the same is not true for counting rational points on $\Sym^2 \Ps^2$. Th\'eor\`eme~4.2 in \cite{RudulierThesis} shows that the Manin--Peyre conjecture is only true after removing a thin set of type II. Hence the Batyrev--Tschinkel conjecture cannot be satisfied; in the current formulation it naturally deals with some type I thin sets as for $\Sym^2(\Ps^1 \times \Ps^1)$. However, any Fano variety which only satisfies the Manin--Peyre conjecture after removing a thin set of type II will not satisfy the Batyrev--Tschinkel conjecture. A refined version which also deals with thin sets of type II is still missing.

\end{document}